\newcommand\xcite[1]{\def\xtopic{#1}\cite{#1}}
\author[DeBacker]{Stephen DeBacker}
\address{University of Michigan \\ Ann Arbor, MI \ \ 48109}
\email{smdbackr@umich.edu}
\author[Spice]{Loren Spice}
\address{Texas Christian University \\ Fort Worth, TX \ \ 76129}
\email{l.spice@tcu.edu}
\thanks{The authors were both partially supported by
National Science Foundation Focussed-Research Grant 0854897.
The second-named author was partially supported by
Simons Foundation Collaboration Grant 246066.}
\title{Stability of character sums for positive-depth,
supercuspidal representations}
\subjclass[2000]{Primary 22E50, 22E35}
\keywords{$p$-adic group,
	supercuspidal representation,
	local Langlands correspondence}
\begin{document}
\begin{abstract}
We re-write the character formul{\ae} of
\cite{adler-spice:explicit-chars} in a form amenable to
explicit computations in \(p\)-adic harmonic analysis, and
use them to prove the stability of character sums for a
modification of Reeder's conjectural positive-depth,
unramified, toral supercuspidal L-packets
\cite{reeder:sc-pos-depth}.
\end{abstract}
\maketitle

\section{Introduction}

In \cite{reeder:sc-pos-depth}*{\S6.6, p.~18}, Reeder
generalises his joint work with the first-named author
on the depth-\(0\), supercuspidal local Langlands
correspondence
\cite{debacker-reeder:depth-zero-sc}*{Theorem 4.5.3}
to construct some candidates for
positive-depth, supercuspidal L-packets.
These consist of what could be called the unramified, toral
supercuspidal representations.
(In the notation of
Definitions \ref{defn:cusp-pair} and \ref{defn:cusp-ind},
they are the representations \(\YuUp_{G'}^G \pi'\)
arising from cuspidal pairs \((\bG', \pi')\) in which
\(\bG'\) is an unramified torus.)
He proves that these sets satisfy many of the
properties expected of L-packets, but does not show that
appropriate combinations of the characters in an L-packet
are stable.  In this paper, we use the
character formul{\ae} of \cite{adler-spice:explicit-chars}
to demonstrate the appropriate stability,
after replacing the construction of Adler--Yu used in
\cite{reeder:sc-pos-depth}*{\S3} by an appropriate `twisted'
analogue (Definition \ref{defn:twisted-cusp-ind}).
We show that there are cases when this twist is necessary
(Example \ref{example:must-twist}).
It is possible, though we have not yet explored this, that
our twist bears some relation to the notion of
\textit{rectifier} introduced by Bushnell and Henniart
\cite{bushnell-henniart:tame-llc-3}*{Definition 1}.
In any case, recent work of Yu \cite{yu:supercuspidal-revisited}
suggests that a different lift of the
Weil representation in the construction of
\cite{yu:supercuspidal}*{Proposition 4.6 and Theorem 11.5}
would obviate the need for this \textit{ad hoc} twist.

The paper proceeds as follows.  In \S\ref{sec:notation}, we
establish some basic notation regarding the groups and
fields that we consider.
Of particular importance in this section is the
`reduced discriminant' (Definition \ref{defn:disc}), which
we use to normalise characters and orbital integrals in
Definition \ref{defn:normal-harm}.
In \S\ref{sec:MP}, we recall the definitions of the
Moy--Prasad filtrations of a \(p\)-adic group and its Lie
algebra, and introduce some refinements
(\S\ref{sec:MP-mixed-depth}).

The meat of the paper is \S\ref{sec:ratl}, in which we
re-cast the character formul{\ae} of
\cite{adler-spice:explicit-chars} in a form that is more
suitable for our use and, we believe, for later explicit
calculations in harmonic analysis.
In this section, we handle a wide range of characters, not
just the toral ones considered by Reeder; but our work
inherits from \cite{adler-spice:explicit-chars}
a `compactness' condition
(see Definition \ref{defn:cusp-pair}).
We believe that Theorem \ref{thm:ratl} provides
an idea of the shape of a formula for the character of an
arbitrary tame supercuspidal representation, even one that does
not satisfy the compactness condition;
verifying this will be the subject of future work.

To further explain the content of \S\ref{sec:ratl}, we note
that the character formul{\ae} of
\cite{adler-spice:explicit-chars} are explicit, in the sense
that all ingredients are described in terms of the
parametrising data (see Definition \ref{defn:cusp-ind});
but they can be difficult to evaluate in particular cases.
(See, for example,
\xcite{adler-debacker-sally-spice:sl2}*{\S\xref{sec:ordinary}}
for the case \(G = \SL_2(\field)\).)
There are four main ingredients:
	\begin{itemize}
	\item The character of \(\pi'\).  We assume
inductively that this is already known.  The base case of
this induction is provided by depth-\(0\) character
formul\ae; see
\cite{debacker-reeder:depth-zero-sc}*{Lemma 10.0.4}.
	\item A product of indices of subquotients (see
\xcite{adler-spice:explicit-chars}*
	{Proposition \xref{prop:induction1}}).
This is discussed in \S\ref{sec:indices},
and computed in Corollary \ref{cor:const}.
	\item A fourth root of unity
(see \xcite{adler-spice:explicit-chars}*
	{Propositions \xref{prop:theta-tilde-phi}
	and \xref{prop:gauss-sum}}).
This is discussed in \S\ref{sec:root},
and computed (or, at least, re-cast in a form more amenable
to stability calculations)
in Proposition \ref{prop:root}.
Its behaviour is rather subtle; see
Proposition \ref{prop:stable-sign} in the unramified case,
and \cite{spice:signs-W} for the effects of ramification.
	\item The Fourier transform of an orbital integral on a
centraliser subgroup.
This contribution is predicted by Murnaghan--Kirillov theory
(see, for example, \cite{jkim-murnaghan:charexp}),
and plays the r\^ole of a Green's function in the
Deligne--Lusztig character formul{\ae}
(see, for example, \cite{debacker-reeder:depth-zero-sc}*
	{Lemma 12.4.3}).
We treat it as an uncomputed `primitive ingredient',
although the normalisation of the measure occurring in its
definition is quite important.
See \S\ref{sec:orbital}, particularly
Proposition \ref{prop:orbit}.
	\end{itemize}

Finally, in \S\ref{sec:stable}, we use the character formula
Theorem \ref{thm:ratl} to show stability of character sums.
Here we restrict our attention to toral supercuspidal
representations, and, for most of the section (with the
exception of Corollary \ref{cor:ratl}), even to
\emph{unramified}, toral supercuspidal representations.
Motivated by Theorem \ref{thm:ratl},
we begin by introducing the modified Yu-type construction
(Definition \ref{defn:twisted-cusp-ind})
mentioned above.
This simplifies the character formula slightly
(Corollary \ref{cor:ratl}),
and allows us to write an analogue
(Theorem \ref{thm:almost-stable}) of
\cite{debacker-reeder:depth-zero-sc}*{Lemma 11.0.2}
that expresses the sums of the characters in a
Reeder-type L-packet in terms of Fourier transforms
of stable orbital integrals.
Finally, in Theorem \ref{thm:stable}, we use this
expression, together with work of Waldspurger on such
Fourier transforms (\cite{waldspurger:transfert}*{Th\'eor\`eme 1.5}
and \cite{debacker-reeder:depth-zero-sc}*{Lemma 12.2.3}),
to prove the desired stability.

\numberwithin{thm}{subsection}

\section{Notation}
\label{sec:notation}

\subsection{Fields and characters}

\begin{defn}
Let \mterm\field be a field
and \mterm\ord a non-trivial discrete valuation on \field
(with value group \Z)
with respect to which \field is complete.
Choose a fixed separable closure \(\mterm\sepfield/\field\),
and denote the unique extension of \(\ord\) to \sepfield
again by \(\ord\).
Let \mterm\unfield (respectively, \mterm\tamefield) be the
maximal unramified (respectively, tame) extension of \field
inside \sepfield.
Put \(\mterm\Gamma = \Gal(\sepfield/\field)\)
and
\(\mterm{\Gamma\unfix} = \Gal(\sepfield/\unfield)\).

If \(E/\field\) is a separable extension, then denote by
\begin{itemize}
\item
\mterm{\pint_E} the ring of integers of \(E\),
\item
\mterm{\pp_E} the unique maximal ideal in \(\pint_E\),
\item
\mterm{\ff_E} the residue class field \(\pint_E/\pp_E\)
of \(E\),
and
\item
\mterm{q_E} the cardinality of \(\ff_E\).
\end{itemize}
We sometimes drop the superscript \(E\) when \(E = \field\).
We require that \ff be finite (equivalently, that \field be
locally compact).

Finally,
let \(\Frob\) be any element of \(\Gamma\)
whose image in \(\Gal(\ff_\unfield/\ff)\) is the inverse of
\anonmapto t{t^q}.
	%% For consistency with
	%% \cite{debacker-reeder:depth-zero-sc}*{p.~802}.
\end{defn}

\begin{defn}
\label{defn:add-char}
Let \AddChar be an additive character of \field,
\ie, a homomorphism \anonmap\field{\C\mult},
chosen as in
\xcite{adler-spice:explicit-chars}*
	{\S\xref{sec:generalities}}.
In particular, \AddChar is trivial on \pp but not on \pint.
Write \AddChar again for the resulting character
of \ff.
Given a finite-dimensional \field- or \ff-vector space
\(V\),
these characters fix a canonical identification of the
linear and Pontrjagin duals of \(V\).
\end{defn}

\begin{defn}
\label{defn:FT}
If \(V\) is a finite-dimensional \field- or \ff-vector
space, with Pontrjagin dual \(\widehat V\),
and \map f V\C is smooth
(that is, locally constant with respect to the
analytic topology on \(V\)---which is no condition if the
field of scalars is \ff),
then the Fourier transform \map{\hat f}{\widehat V}\C
of \(f\) is defined by
\[
\hat f(\chi) = \int_V f(X)\chi(X)\textup dX
\qforall{\(\chi \in \widehat V\),}
\]
where \(\textup dX\) is a suitably normalised Haar measure
on \(V\) (see below).
The choice of additive character \AddChar
(Definition \ref{defn:add-char})
allows us to regard \(\hat f\) as a function on the
linear dual \(V^*\).
Since \(\widehat{\widehat V}\) is canonically identified with \(V\),
we may, and do, further regard \(\Hat{\Hat f}\) as a function on
\(V\).
Then the Haar measure \(\textup dX\) is chosen so that the
fourth power of the Fourier transform is the identity.
\end{defn}

\subsection{Tori and related functions}
\label{sec:tori}

For the remainder of this paper, let \mterm\bG be a
connected, reductive \field-group.

Whenever a \field-group is denoted by a boldface Roman
letter, such as \bH, we denote its Lie algebra by the
corresponding boldface Fraktur letter, such as \pmb\fh.
Whenever a \field-variety is denoted by a boldface letter,
such as \bY, we denote its set of rational points by
the corresponding non-boldface letter, such as \(Y\).
Thus, for example, \fh is the space of \field-rational
points of the Lie algebra of \bH.
The identity component of \bH is denoted by \(\bH\conn\),
and, by abuse of notation, the group of \field-rational
points of \(\bH\conn\) by \(H\conn\) (even though this
latter group is actually totally disconnected in the
analytic topology inherited from \field).

A maximal torus \bT in \bG (that is defined over \field)
is called a \term{maximal \(G\)-torus}.

\begin{defn}
\label{defn:tori}
If \bT is a maximal \(G\)-torus, then
write \mterm{\wtilde\Root(\bG, \bT)}
for the set of weights of the adjoint action of
\(\bT_\sepfield\) on \(\pmb\fg_\sepfield\),
and
\(\mterm{\Root(\bG, \bT)}
= \wtilde\Root(\bG, \bT) \setminus \sset0\)
for the absolute root system of \bT in \bG.
\end{defn}

\begin{defn}
An element \(\gamma \in G\)
(respectively, \(X \in \fg\);
respectively, \(X^* \in \fg^*\))
for which the identity component of
\(C_\bG(\gamma)\)
(respectively, the stabiliser of \(X\) in the adjoint action;
respectively, the stabiliser of \(X^*\) in the coadjoint action)
is a torus is called \term{regular semisimple}.
Write \mterm{G\rss}
(respectively, \mterm{\fg\rss};
respectively, \mterm{\fg^{*\,\textup{rss}}})
for the set of such elements.
A regular, semisimple element with connected centraliser is
called \term{strongly regular semisimple}
\cite{debacker-reeder:depth-zero-sc}*{\S2.9, p.~817}.
\end{defn}

For the remainder of this section, fix a maximal \(G\)-torus
\bT, and put \(\Root = \Root(\bG, \bT)\)
and \(\wtilde\Root = \wtilde\Root(\bG, \bT)\).

As in \cite{yu:supercuspidal}*{\S4, p.~591},
we identify the Lie algebra \ft and its dual \(\ft^*\) with
the subgroups of \fg and \(\fg^*\), respectively,
fixed by the (adjoint or coadjoint, respectively) action of \(T\).

The terminology and symbols below
(Definitions \ref{defn:symm} and \ref{defn:root-constants})
make sense for arbitrary characters, but we need them only
for roots.
The notation is as in
\xcite{adler-spice:explicit-chars}*
	{Notation \xref{notn:root-constants}},
except that we write \(\field_\alpha\) in place of
\(F_\alpha\), and \(\sigma_\alpha\) in place of
\(\eta_\alpha\).

\begin{defn}
\label{defn:symm}
If \(\alpha \in \Root\), then \(\alpha\) is
\term{symmetric} exactly when its \(\Gamma\)-orbit contains
\(-\alpha\) (and non-symmetric otherwise).
A symmetric root is \term{ramified} exactly when its
\(\Gamma\unfix\)-orbit contains \(-\alpha\)
(and \term{unramified} otherwise).

If \(S\) is any subset of \Root, then
\mterm{S\symm},
\mterm{S\symmunram},
\mterm{S\symmram},
and \mterm{S\nosymm}
are the sets of roots in \(S\) that are
symmetric,
symmetric and unramified,
symmetric and ramified,
and non-symmetric,
respectively.
If \(\alpha \in \Root\symm\),
then let \mterm{\sigma_\alpha}
be any element of \(\Gamma\unfix\)
(if \(\alpha\) is ramified)
or of \(\Gamma\) (if \(\alpha\) is unramified)
such that \(\sigma_\alpha\alpha = -\alpha\).
\end{defn}

\begin{defn}
\label{defn:root-constants}
If \(\alpha \in \wtilde\Root\), then put
\(\mterm{\Gamma_\alpha} = \stab_\Gamma \alpha\)
and
\(\mterm{\Gamma_{\pm\alpha}} = \stab_\Gamma \sset{\pm\alpha}
	= \langle\Gamma_\alpha, \sigma_\alpha\rangle\).
Write \mterm{\field_\alpha} for the fixed field in \sepfield
of \(\Gamma_\alpha\), and similarly for
\mterm{\field_{\pm\alpha}}.
All our notations below with subscript \(\alpha\)
have obvious analogues with subscript \(\pm\alpha\),
which we use as necessary.
Write
\def\pmterm#1{%
	\index{notation}{\(#1_{\pm\alpha}\)}%
	\mterm{#1_{\alpha}}%
}
	\begin{itemize}
	\item
\(\pmterm\pint = \pint_{\field_\alpha}\)
and
\(\pmterm\pp = \pp_{\field_\alpha}\);
	\item \pmterm n, \pmterm e, and \pmterm f
for the degree, ramification degree,
and residual degree, respectively, of \(\field_\alpha/\field\);
	\item \(\pmterm\ff = \ff_{\field_\alpha}\),
and \(\pmterm q = q_{\field_\alpha}\);
	\item if \(\alpha\) is symmetric and unramified,
then \mterm{\ff_\alpha^1} is the kernel of the norm map from
\(\ff_\alpha\) to \(\ff_{\pm\alpha}\);
and
	\item
\(\pmterm\Z = \ord(\field_\alpha\mult) = e_\alpha\inv\Z\)
for the value group of \(\field_\alpha\).
	\end{itemize}
These objects depend only on the \(\Gamma\)-orbit
\(\omega\) of \(\alpha\); so we denote them instead by
\(\field_\omega\), \&c., when convenient.
\end{defn}

\begin{defn}
If \(\alpha \in \wtilde\Root\), then
\(\pmb\fg_\alpha\) is the \(\alpha\)-weight space for the
action
of \(\bT_{\field_\alpha}\) on \(\pmb\fg_{\field_\alpha}\),
and \mterm{\bG_\alpha} is the corresponding subgroup of
\(\bG_{\field_\alpha}\)
(denoted by \(U_\alpha\) in
\cite{springer:lag}*{Proposition 8.1.1(i)}
when \(\alpha \ne 0\)).
Note that these are defined only over \(\field_\alpha\), not
necessarily over \field.
\end{defn}

\begin{rem}
In particular, \(\mterm{\fg_0} = \ft\)
and \(\mterm{\bG_0} = \bT\), both of which are defined over
\(\mterm{\field_0} = \field\).
\end{rem}

\begin{defn}
\label{defn:ord-by-elt}
For \(\gamma \in T\) and \(X \in \ft\), and any character
\(\chi\) of \(\bT_\sepfield\), put
\[
\mterm{\ord_\gamma} \chi = \ord\bigl(\chi(\gamma) - 1\bigr)
\qandq
\ord_X \chi = \ord \textup d\chi(X).
\]
For \(X^* \in \ft^*\) and \(\alpha \in \Root\),
define
\(\mterm{\textup d\alpha^\vee(X^*)}
\ldef X^*(\textup d\alpha^\vee(1))\)
and put
\(\mterm{\ord_{X^*}} \alpha
= \ord \textup d\alpha^\vee(X^*)\).
\end{defn}

Note that \(\ord_\gamma\) is a non-negative function
exactly when \(\gamma\) is bounded
(that is, has bounded orbits on the set \(\BB(\bG, \field)\)
of Definition \ref{defn:building} below,
in the sense of
\cite{bruhat-tits:reductive-groups-1}*
	{Exemple 3.1.2(b)}).
	%% The reference is in French; it really is `Exemple'.

It is well known that the discriminant function on \(G\)
(see, for example, \cite{hc:harmonic}*{\S VI.1, p.~63})
controls the `blow up' of characters
\cite{hc:harmonic}*{Theorem VI.8.14}.
We introduce a slight variant where, rather than taking the
\emph{globally} lowest coefficient in a characteristic
polynomial (which may vanish), we take the \emph{pointwise}
lowest coefficient (which never vanishes).
It seems that this may actually provide a better measure of
the blow up of the character; at least, it gives us a more
natural inductive formula for characters
(Theorem \ref{thm:ratl}).

\begin{defn}
\label{defn:disc}
If \(\gamma \in G\)
(respectively, \(X \in \fg\)),
\index{notation}{D_G!group or Lie algebra@\(\redD_G\)}%
then \(\redD_G(\gamma)\)
(respectively, \(\redD_G(X)\))
is the coefficient of the lowest-degree power of \(t\) in
\(\det(\Ad(\gamma) - 1 + t)\)
(respectively, \(\det(\ad(X) + t)\)).
%(See \cite{hc:queens}*{\S18}.)
\end{defn}

\begin{rem}
\label{rem:disc:roots}
Preserve the notation of Definition \ref{defn:disc}.
If \(\gamma \in G\) and \(X \in \fg\) have Jordan
decompositions
\(\gamma = \gamma\semi\dotm\gamma\unip\)
and
\(X = X\semi + X\textsub{nil}\), respectively, then
\(\redD_G(\gamma) = \redD_G(\gamma\semi)\)
and
\(\redD_G(X) = \redD_G(X\semi)\).
Now suppose that \(\gamma \in T\) and \(X \in \ft\).
By passing to a splitting field for \bT, we see that
\[
\redD_G(\gamma)
= \prod_{\substack{\alpha \in \Root \\ \alpha(\gamma) \ne 1}}
	\bigl(\alpha(\gamma) - 1\bigr)
\qandq
\redD_G(X)
= \prod_{\substack{\alpha \in \Root \\ \textup d\alpha(X) \ne 0}}
	\textup d\alpha(X),
\]
hence that
\[
\abs{\redD_G(\gamma)}
= \prod_{\substack{\alpha \in \Root \\ \alpha(\gamma) \ne 1}}
	q^{-\ord_\gamma \alpha}
\qandq
\abs{\redD_G(X)}
= \prod_{\substack{\alpha \in \Root \\ \textup d\alpha(X) \ne 0}}
	q^{-\ord_X \alpha}.
\]
It is therefore natural to define
\[
\redD_G(X^*)
= \prod_{\substack{\alpha \in \Root \\ \textup d\alpha^\vee(X^*) \ne 0}}
	\textup d\alpha^\vee(X^*)
\qforall{\(X^* \in \ft^*\).}
\]
	%% Is there a Jordan decomposition for elements of
	%% the dual Lie algebra?
\end{rem}

\section{Moy--Prasad filtrations}
\label{sec:MP}

As in \S\ref{sec:tori}, let \bG be a connected,
reductive \field-group.

\begin{defn}
Put \(\tR = \R \sqcup \set{{r+}}{r \in \R}\).
This carries a natural order,
and an order-reversing involution \(\wtilde\cdot\)
defined by \(\tilde r = {(-r)+}\)
and \(\widetilde{r+} = -r\) for \(r \in \R\).
\end{defn}

Note that, unlike
\xcite{adler-spice:good-expansions}*
	{\S\xref{sec:notation}},
we do not find it convenient to include \(+\infty\) in \tR.

\begin{defn}
\label{defn:building}
Write
\mterm{\BB(\bG, \field)} for the enlarged,
and \mterm{\rBB(\bG, \field)} for the reduced,
Bruhat--Tits building of \bG over \field
(\cite{bruhat-tits:reductive-groups-1}*
	{D\'efinition 7.4.2}
and \cite{bruhat-tits:reductive-groups-2}*
	{\S4.2.16}).
There is a natural projection
\anonmap{\BB(\bG, \field)}{\rBB(\bG, \field)},
	%% This projection is also defined in BT2, \S4.2.16.
denoted by \anonmapto x{\mterm{\ol x}}.
\end{defn}

\begin{rem}
\label{rem:compatibly-filtered}
If \bT is a \tamefield-split, maximal \(G\)-torus, then
there is a family of embeddings of \(\BB(\bT, \field)\) in
\(\BB(\bG, \field)\), with canonical image.
(To see this, regard the apartment \(\AA(\bT_E, E)\) of
\(\bT_E\) as a subsimplex of \(\BB(\bG, E)\),
where \(E/\field\) is the splitting field of \bT,
and then realise \(\BB(\bT, \field)\)
and \(\BB(\bG, \field)\) as the sets of
\(\Gamma\)-fixed points of \(\AA(\bT_E, E)\)
and \(\BB(\bG, E)\) \cite{tits:corvallis}*{\S2.6.1}.)
\end{rem}

For the remainder of this section, fix a \tamefield-split,
maximal \(G\)-torus \bT
and a point \(x \in \BB(\bG, \field)\) lying in
the image of \(\BB(\bT, \field)\)
(which we abbreviate to just ``\(x \in \BB(\bT, \field)\)'').
Put \(\Root = \Root(\bG, \bT)\) and
\(\wtilde\Root = \wtilde\Root(\bG, \bT)\).

\subsection{Valuations of root data}

\begin{defn}
\label{defn:MP}
If \(r \in \tR\), then write
\[
\mterm{\fg_{x, r}}\text,\quad
\mterm{\fg^*_{x, r}}\text,
\qandq
\text{\mterm{G_{x, r}} (if \(r \ge 0\))}
\]
for the associated Moy--Prasad filtration subgroups of
\fg, \(\fg^*\), and \(G\), respectively
(see \cite{moy-prasad:k-types}*{\S\S2.6, 3.2, 3.5}
and \cite{moy-prasad:jacquet}*{\S\S3.2, 3.3}).
Also, put
\(\mterm{G_r}
= \bigcup_{y \in \BB(\bG, \field)} G_{y, r}\)
and
\(\mterm{\fg_r}
= \bigcup_{y \in \BB(\bG, \field)} \fg_{y, r}\).
\end{defn}

In particular, \(G_{x, 0}\) is the parahoric subgroup of
\(G\) associated to \(x\)
\cite{bruhat-tits:reductive-groups-2}*{D\'efinition 5.2.6},
and
\(\fg^*_{x, r} = \sett
	{X^* \in \fg^*}
	{\(\langle X^*, Y\rangle \in \pp\)
		for all \(Y \in \fg_{x, \tilde r}\)}\).

If \mc G is a group equipped with a filtration
\((\mc G_i)_{i \in I}\) by subgroups,
then we frequently write \mterm{\mc G_{i:j}} in place of
\(\mc G_i/\mc G_j\) when \(\mc G_j \subseteq \mc G_i\)
(even if the quotient is not a group).
Thus, for example, \(G_{x, 0:{0+}}\) is the group of
\ff-points of a reductive \ff-group
\cite{moy-prasad:k-types}*{\S3.2, p.~398}.

Our (rational) character computation Theorem \ref{thm:ratl}
requires the existence of a `mock-exponential' map that
behaves, in many ways, like an exponential map,
but may sacrifice some equivariance in order to converge on
a larger domain.
Hypothesis \ref{hyp:stronger-mock-exp} below,
which depends implicitly on a positive rational number \(r\),
formalises this.
Note, however, that we do not need this hypothesis until
Theorem \ref{thm:ratl} (at which point \(r\) stands for
the depth of the representation whose character we compute
there).
We also require the hypothesis for the consequences of
Theorem \ref{thm:ratl} in \S\ref{sec:stable}; but, in that
section, we actually impose the \emph{stronger}
hypothesis
\cite{debacker-reeder:depth-zero-sc}*{Restriction 12.4.1(2)}.

\begin{hyp}
\label{hyp:stronger-mock-exp}
There exists a family of bijections
\[
\bigl(
	\map{\mterm{\mexp_{\bS, \ol y}}}{\fg_{y, r}}{G_{y, r}}
\bigr)_{\substack{
	\text{\bS a \tamefield-split maximal \(G\)-torus} \\
	\ol y \in \im \anonmap{\BB(\bS, \field)}{\rBB(\bG, \field)}
}}
\]
satisfying \xcite{adler-spice:explicit-chars}*
	{Hypotheses \xref{hyp:mock-exp}
	and \xref{hyp:strong-mock-exp}},
\emph{and}, whenever
	\begin{itemize}
	\item \bH is a reductive subgroup of \bG,
	\item \bS is a \tamefield-split maximal \(G\)-torus
that is contained in \bH,
	\item \(y \in \BB(\bS, \field)\),
	and
	\item \(Y \in \fh_{x, r}\),
	\end{itemize}
also the following conditions:
\begin{enumerate}
\item\label{hyp:stronger-mock-exp:disc}
\(\smabs{\redD_H(Y)}
= \smabs{\redD_H(\mexp_{\bS, \ol y} Y)}\),
and
\item\label{hyp:stronger-mock-exp:equi}
if \(g \in G\) and \(\Int(g)\mexp_{\bS, \ol y}Y \in G_{y, r}\),
then
\[
\hat\mu^{\Int(g)H}_{\Ad^*(g)X^*}
	(\mexp_{\bS, \ox}\inv \Int(g)\mexp_{\bS, \ox}Y)
= \hat\mu^H_{X^*}(\Ad(g)Y)
\quad\text{for all \(X^* \in \fh^*_{x, -r}\),}
\]
where \(\hat\mu\) is as in Definition \ref{defn:normal-harm}.
\end{enumerate}
\end{hyp}

The point \(x \in \BB(\bT, \field)\) defines a
`valuation of root datum', roughly in the sense of
\cite{bruhat-tits:reductive-groups-1}*{\S6.1.1}, except that
we must take into account the fact that \bT need not be
\field-split (nor even maximally \field-split).

\begin{defn}
\label{defn:ord-x}
For \(\alpha \in \wtilde\Root\) and \(i \in \wtilde\R\),
write
\(\pmb\fg_\alpha(\field_\alpha)_{x, i}
\ldef \pmb\fg_\alpha(\field_\alpha) \cap
\pmb\fg(\field_\alpha)_{x, i}\),
and put
\(\mterm{\ord_x} \alpha = \set
	{i \in \R}
	{\pmb\fg_\alpha(\field_\alpha)_{x, i:{i+}} \ne 0}\).
%(Note that our \(\Root_x\) is a set of \emph{absolute} roots,
%whereas the set \(\Root_x\) of \cite{tits:corvallis}*{\S1.9}
%consists of \emph{relative} roots.  Any relative root of the
%latter set is the restriction of an absolute root in the
%former set, but not conversely.)
%	\Lxxx{Is that true?  Note that Tits's \(\Root_x\) is
%only even \emph{defined} relative to a maximal(ly?\@)
%\field-split torus; can it still differ from the set of
%restrictions of our \(\Root_x\)?}
\end{defn}

\begin{rem}
\label{rem:ord-unram}
Note that \(\ord_x \alpha\) implicitly depends on the
underlying field \field (and on the normalisation of its
valuation).
Since \(\unwrap\unfield_\alpha/\field_\alpha\) is unramified,
we have that
\(\pmb\fg(\field_\alpha)_{x, i:{i+}}
= \pmb\fg(\unwrap\unfield_\alpha)_{x, i:{i+}}
	^{\Gamma_\alpha}\)
\cite{yu:supercuspidal}*{Corollary 2.3},
hence,
by Hilbert's Theorem 90
\cite{serre:galois}*{Proposition II.1.2.1},
that
\(\pmb\fg(\field_\alpha)_{x, i:{i+}} \ne 0\)
if and only if \(\pmb\fg(\unwrap\unfield_\alpha)_{x, i:{i+}} \ne 0\).
In other words, \(\ord_x \alpha\) does not change if \field
is replaced by an unramified extension.
\end{rem}

\begin{rem}
Since \(\pmb\fg_\alpha(\field_\alpha)\) is a
\(1\)-dimensional \(\field_\alpha\)-space, we have
that \(\ord_x \alpha\) is a \(\Z_\alpha\)-torsor.
With the terminology of \cite{moy-prasad:k-types}*{\S2.5},
\[
\ord_x \alpha = \sett
	{\psi(x)}
	{\(\psi\) is an affine \(\field_\alpha\)-root
		with gradient \(\alpha\)}.
\]
\end{rem}

Lemma \ref{lem:opp-ord} below is of interest mainly because
of its consequences,
Corollaries \ref{cor:opp-ord} and \ref{cor:ord-symm}.

\begin{lem}
\label{lem:opp-ord}
If \(\alpha \in \Root\) and \(i, j \in \tR\)
with \(i \le j\),
then the pairing
\[
\pmb\fg_\alpha(\field_\alpha)_{x, i:j}
	\otimes \pmb\fg_{-\alpha}(\field_\alpha)_{x, \tilde\jmath:\tilde\imath}
\xrightarrow{[\cdot, \cdot]} \pmb\ft(\field_\alpha)_{0:{0+}}
\xrightarrow\alpha \ff_\alpha
\]
is non-degenerate.
\end{lem}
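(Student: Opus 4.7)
The plan is to reduce the non-degeneracy to a direct check on matching associated graded pieces, where both sides are $1$-dimensional over $\ff_\alpha$. First, I would observe that $\pmb\fg_\alpha(\field_\alpha)$ and $\pmb\fg_{-\alpha}(\field_\alpha)$ are each $1$-dimensional over $\field_\alpha$, so their Moy--Prasad filtrations have $1$-dimensional $\ff_\alpha$-graded pieces indexed by the torsors $\ord_x\alpha$ and $\ord_x(-\alpha)$, respectively. A standard symmetry of valuations of root data gives $\ord_x(-\alpha) = -\ord_x\alpha$; combined with the bracket compatibility $[\pmb\fg(\field_\alpha)_{x,a}, \pmb\fg(\field_\alpha)_{x,b}] \subseteq \pmb\fg(\field_\alpha)_{x,a+b}$ and the fact that $\alpha$ kills $\pmb\ft(\field_\alpha)_{x, 0+}$ modulo $\pp_\alpha$, this shows that the pairing decomposes block-diagonally: only ``matched'' pairs of graded pieces at levels $(r, -r)$ for $r \in \ord_x\alpha \cap [i, j)$ can contribute non-trivially to $\ff_\alpha$.

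On each such block, the pairing is a map
\[
\pmb\fg_\alpha(\field_\alpha)_{x, r:r+} \otimes \pmb\fg_{-\alpha}(\field_\alpha)_{x, -r:(-r)+} \to \ff_\alpha
\]
of $1$-dimensional $\ff_\alpha$-vector spaces, so non-degeneracy amounts to non-vanishing. Fixing generators $X_\alpha$ and $X_{-\alpha}$ of the two pieces, the fact that $[\pmb\fg_\alpha, \pmb\fg_{-\alpha}] \subseteq \field_\alpha\cdot\textup d\alpha^\vee(1)$ (as a weight-$0$ subspace) lets me write $[X_\alpha, X_{-\alpha}] = c\cdot\textup d\alpha^\vee(1)$ for some $c \in \field_\alpha^\times$, so that $\alpha([X_\alpha, X_{-\alpha}]) = 2c$. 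Filtration compatibility gives $c \in \pint_\alpha$; the heart of the lemma is the opposite inequality $c \notin \pp_\alpha$, which together with the standing good-characteristic assumption (so that $2$ is a unit in $\ff_\alpha$) gives non-vanishing.

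The main obstacle is thus the integrality claim $c \in \pint_\alpha^\times$. The cleanest route is to reduce to the split case by passing to the maximal unramified extension, using Remark \ref{rem:ord-unram} to guarantee that $\ord_x\alpha$ (and hence the filtration on each root space) is unaffected. Once $\bT$ is $\field_\alpha$-split, one can take $X_\alpha$ and $X_{-\alpha}$ compatible with a Chevalley basis relative to an integral $\pint_\alpha$-structure at $x$, so that $[X_\alpha, X_{-\alpha}]$ is exactly a unit multiple of $\textup d\alpha^\vee(1)$ in $\pmb\ft(\field_\alpha)_{x,0}\setminus\pmb\ft(\field_\alpha)_{x,0+}$. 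Alternatively, one could short-circuit the explicit Chevalley computation by invoking the self-duality $\pmb\fg(\field_\alpha)_{x,r}^\perp = \pmb\fg(\field_\alpha)_{x,\tilde r}$ of Moy--Prasad with respect to a unit-normalized invariant form on $\pmb\fg$ whose restriction to $\pmb\fg_\alpha\times\pmb\fg_{-\alpha}$ differs from $\alpha\circ[\cdot,\cdot]$ by a unit factor, and transfer the resulting duality to the quotients.
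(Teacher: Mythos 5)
Your argument takes a genuinely different route from the paper's, which passes to the rank-one $\field_\alpha$-group $\bG_{\pm\alpha}$, surjects it onto $\PGL_2$ via Springer's isomorphism theorem, and finishes with a direct $\mf{pgl}_2$ computation. Your strategy of decomposing the pairing into matched one-dimensional $\ff_\alpha$-graded blocks and checking non-vanishing on each block is sound in outline, but watch the logical order: the ``standard symmetry'' $\ord_x(-\alpha)=-\ord_x(\alpha)$ you invoke at the outset is exactly Corollary~\ref{cor:opp-ord}, which the paper derives as a \emph{consequence} of Lemma~\ref{lem:opp-ord}. The fact can be established independently from the symmetry of the affine $\field_\alpha$-root system under negation (using the description of $\ord_x\alpha$ as the set of values $\psi(x)$ over affine $\field_\alpha$-roots $\psi$ with gradient $\alpha$), but as written your proof risks circularity within this paper's framework, and you should supply that separate justification.

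The ``alternative'' you sketch at the end---invoking Moy--Prasad self-duality with respect to a unit-normalized invariant form whose restriction to $\pmb\fg_\alpha \times \pmb\fg_{-\alpha}$ differs from $\alpha\circ[\cdot,\cdot]$ by a unit---is more troubling: the assertion that such a form exists with that unit-normalization on the root spaces is essentially a restatement of the lemma, so it does not short-circuit anything. Stick with the Chevalley-basis computation in the main branch. It amounts to unpacking how the Moy--Prasad filtration on a root space is built from a valued root datum (in which $[X_\alpha, X_{-\alpha}] = \textup d\alpha^\vee(1)$ on the nose), and, together with the standing assumption $p\neq 2$ (so that $\alpha(\textup d\alpha^\vee(1)) = 2$ is a unit in $\ff_\alpha$), it does close the argument. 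Comparing the two approaches: the paper's is shorter and isolates the structure-theoretic input (an isogeny onto $\PGL_2$); yours is more hands-on and makes the role of the unit-normalization and of $p\neq 2$ explicit, which is pedagogically clarifying even if somewhat longer.
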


\begin{proof}
Let \(\bG_{\pm\alpha}\) be the \(\field_\alpha\)-split,
rank-\(1\), semisimple Lie group generated by the
(\(\pm\alpha\))-root subgroups of \bG.  
By \cite{springer:lag}*{\S16.3.1}, the proof of
\cite{springer:lag}*{Theorem 7.2.4} goes through to show
that there is a \(\field_\alpha\)-surjection
\anonmap{\bG_{\pm\alpha}}{\PGL_2}.
(Alternatively, a more machinery-heavy proof could apply the
isomorphism theorem \cite{springer:lag}*{Theorem 16.3.2}
to the obvious morphism from the root datum of \(\PGL_2\) to
that of \(\bG_{\pm\alpha}\).)
A \(\mf{pgl}_2\)-calculation then establishes the result.
\end{proof}

\begin{cor}
\label{cor:duality}
With the notation of Lemma \ref{lem:opp-ord},
there is a non-canonical \(\pint_\alpha\)-module isomorphism of
\(\pmb\fg_\alpha(\field_\alpha)_{x, i:j}\)
with
\(\pmb\fg_{-\alpha}(\field_\alpha)_{x, \tilde\jmath:\tilde\imath}\).
\end{cor}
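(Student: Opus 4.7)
The plan is to prove the isomorphism by a length count on cyclic $\pint_\alpha$-modules. Because $\pmb\fg_\alpha(\field_\alpha)$ is one-dimensional over $\field_\alpha$, each piece $\pmb\fg_\alpha(\field_\alpha)_{x, i}$ of its Moy--Prasad filtration is a fractional $\pint_\alpha$-ideal in it, and so each subquotient $\pmb\fg_\alpha(\field_\alpha)_{x, i:j}$ is a cyclic torsion $\pint_\alpha$-module; the same holds for $\pmb\fg_{-\alpha}(\field_\alpha)_{x, \tilde\jmath:\tilde\imath}$. Since $\pint_\alpha$ is a discrete valuation ring, any such module is isomorphic to $\pint_\alpha/\pp_\alpha^k$ for a unique $k \ge 0$, namely its length, so it suffices to show the two modules have the same length; any choice of generators then exhibits the desired non-canonical isomorphism.

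Next, I would identify the length combinatorially with the jump set $\ord_x\alpha$. As $i$ decreases, the rank-one lattice $\pmb\fg_\alpha(\field_\alpha)_{x, i}$ can only strictly enlarge, and each strict enlargement has quotient of length exactly one (multiplication by a single uniformiser of $\pint_\alpha$); by the definition of $\ord_x$ these enlargements occur precisely at the values in $\ord_x\alpha$. Hence the length of $\pmb\fg_\alpha(\field_\alpha)_{x, i:j}$ equals $\#(\ord_x\alpha \cap [i, j))$, the interval being taken in $\tR$, and similarly for $-\alpha$. Applying Lemma \ref{lem:opp-ord} with $j = i+$ gives, for each $i \in \R$, a non-degenerate $\ff_\alpha$-bilinear pairing of $\pmb\fg_\alpha(\field_\alpha)_{x, i:i+}$ with $\pmb\fg_{-\alpha}(\field_\alpha)_{x, -i:(-i)+}$. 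As both spaces are at most one-dimensional over $\ff_\alpha$, they vanish or not vanish together, which is exactly the identity $\ord_x(-\alpha) = -\ord_x\alpha$.

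Finally, I would close the argument by matching the half-open intervals under the order-reversing involution $r \mapsto \tilde r$ on $\tR$. A case analysis over the four possibilities for whether each of $i, j$ lies in $\R$ or has the form $r{+}$ shows that negation on $\R$ restricts to a bijection between $\ord_x\alpha \cap [i, j)$ and $(-\ord_x\alpha) \cap [\tilde\jmath, \tilde\imath)$; combined with the previous identity this yields equality of lengths. The only real pitfall---and the main obstacle---is this endpoint bookkeeping: because $\tilde r = (-r)+$ while $\widetilde{r+} = -r$, a half-open interval $[i, j)$ in $\tR$ need not map under $\wtilde\cdot$ to a half-open interval, so one must verify in each of the four cases that negation still carries the real set $\ord_x\alpha \cap [i, j)$ bijectively onto $\ord_x(-\alpha) \cap [\tilde\jmath, \tilde\imath)$.
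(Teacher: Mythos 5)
Your proof is correct, but it takes a genuinely different route from the paper's.  The paper's argument is a one-liner: the non-degenerate pairing of Lemma \ref{lem:opp-ord} exhibits \(\pmb\fg_{-\alpha}(\field_\alpha)_{x, \tilde\jmath:\tilde\imath}\) as a dual of \(\pmb\fg_\alpha(\field_\alpha)_{x, i:j}\); a choice of \(\field_\alpha\)-basis identifies the latter with a quotient \(\pp_\alpha^a/\pp_\alpha^b\) of fractional ideals; and such a cyclic module is self-dual.  You instead reduce to a length count: cyclic torsion modules over the DVR \(\pint_\alpha\) are classified by length, the length of each subquotient is \(\#(\ord_x\alpha \cap [i,j))\), the statement \(\ord_x(-\alpha) = -\ord_x\alpha\) (which is Corollary \ref{cor:opp-ord}, legitimately rederived directly from Lemma \ref{lem:opp-ord} with \(j = i+\), so no circularity) reduces you to showing negation carries \(\ord_x\alpha\cap[i,j)\) onto \((-\ord_x\alpha)\cap[\tilde\jmath,\tilde\imath)\), and the four-way case analysis on whether \(i,j\) are of the form \(r\) or \(r+\) checks out.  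What the paper's approach buys is brevity and the fact that it manufactures the isomorphism directly from the duality without any counting; what yours buys is that it stays entirely at the level of elementary module/filtration bookkeeping, avoiding the (slightly delicate) invocation of self-duality and making explicit what \(\tilde\cdot\) is doing to half-open intervals --- which, as you correctly flag, is the one place an inattentive reader could slip.
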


\begin{proof}
This follows from Lemma \ref{lem:opp-ord} and the fact that
a choice of \(\field_\alpha\)-basis of
\(\pmb\fg_\alpha(\field_\alpha)\) furnishes an isomorphism
of \(\pmb\fg_\alpha(\field_\alpha)_{x, i:j}\)
with a quotient of fractional ideals of \(\pint_\alpha\).
Such a quotient is self-dual (as an \(\pint_\alpha\)-module).
\end{proof}

\begin{cor}
\label{cor:opp-ord}
For any \(\alpha \in \Root\),
we have that
\(\ord_x(-\alpha) = -\ord_x(\alpha)\).
\end{cor}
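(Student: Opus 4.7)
The plan is to read off the result directly from Corollary \ref{cor:duality} by a careful bookkeeping of the involution \(\wtilde\cdot\) on \(\tR\). Specifically, I would apply Corollary \ref{cor:duality} in the special case \(j = i+\), where \(i \in \R\). This gives a (non-canonical) \(\pint_\alpha\)-module isomorphism
\[
\pmb\fg_\alpha(\field_\alpha)_{x, i:i+}
\;\cong\;
\pmb\fg_{-\alpha}(\field_\alpha)_{x, \widetilde{i+}:\tilde\imath}.
\]

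Now I would unpack the involution: by the definition immediately preceding Definition \ref{defn:building}, \(\widetilde{i+} = -i\) and \(\tilde\imath = (-i)+\). So the right-hand quotient above is \(\pmb\fg_{-\alpha}(\field_\alpha)_{x, -i:(-i)+}\). In particular, the left-hand side is nonzero if and only if the right-hand side is nonzero. By the definition of \(\ord_x\) (Definition \ref{defn:ord-x}), this translates exactly to: \(i \in \ord_x \alpha\) if and only if \(-i \in \ord_x(-\alpha)\). Taking the union over all such \(i\) yields \(\ord_x(-\alpha) = -\ord_x(\alpha)\), as desired.

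There is really no serious obstacle here; the content has been concentrated in Lemma \ref{lem:opp-ord} and its Corollary \ref{cor:duality}. The only thing to watch carefully is the bookkeeping of the order-reversing involution \(\wtilde\cdot\), which ensures that the \((i, i+)\)-slice on the \(\alpha\)-side matches the \((-i, (-i)+)\)-slice on the \((-\alpha)\)-side. Once that identification is in hand, the statement is immediate from the fact that the pairing in Lemma \ref{lem:opp-ord}, and hence the isomorphism of Corollary \ref{cor:duality}, preserves the property of being nonzero.
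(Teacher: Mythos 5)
Your proof is correct and is exactly the argument the paper intends; the paper states Corollary \ref{cor:opp-ord} without a written proof, flagging it only as a consequence of Lemma \ref{lem:opp-ord} (via Corollary \ref{cor:duality}), and your specialization to the slice $j = i+$ together with the bookkeeping $\widetilde{i+} = -i$, $\tilde\imath = (-i)+$ fills that gap precisely.
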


\begin{cor}
\label{cor:ord-symm}
For any \(\alpha \in \Root\symm\),
we have that
\(\ord_x(\alpha) = \Z_\alpha\)
or
\(\ord_x(\alpha) = \Z_\alpha + \frac1 2 e_\alpha\inv\).
\end{cor}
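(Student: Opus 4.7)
The plan is to combine two observations: Corollary \ref{cor:opp-ord} tells us that \(\ord_x(-\alpha) = -\ord_x(\alpha)\), while the symmetry hypothesis on \(\alpha\) should force the equality \(\ord_x(\alpha) = \ord_x(-\alpha)\). Together these yield \(\ord_x(\alpha) = -\ord_x(\alpha)\) as subsets of \(\R\), which, given that \(\ord_x(\alpha)\) is a \(\Z_\alpha\)-torsor with \(\Z_\alpha = e_\alpha\inv\Z\), pins it down to one of the two cosets asserted.

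First, I would verify the equality \(\ord_x(\alpha) = \ord_x(-\alpha)\). Since \(\alpha\) is symmetric, we have \(\sigma_\alpha \in \Gamma_{\pm\alpha}\) with \(\sigma_\alpha \alpha = -\alpha\); as \(\sigma_\alpha\) normalises \(\Gamma_\alpha\), it acts on the fixed field \(\field_\alpha\), and hence on \(\pmb\fg(\field_\alpha)\). Because \(x \in \BB(\bT, \field)\) is \(\Gamma\)-fixed (using the realisation of \(\BB(\bG, \field)\) as Galois-fixed points in Remark \ref{rem:compatibly-filtered}), in particular \(\sigma_\alpha x = x\), so \(\sigma_\alpha\) preserves the Moy--Prasad filtration on \(\pmb\fg(\field_\alpha)\) at \(x\). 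Since \(\sigma_\alpha\) carries the \(\alpha\)-weight space to the \((-\alpha)\)-weight space, it restricts to an isomorphism
\[
\pmb\fg_\alpha(\field_\alpha)_{x, i:i+}
\xrightarrow{\sim} \pmb\fg_{-\alpha}(\field_\alpha)_{x, i:i+}
\qforall{\(i \in \R\),}
\]
which immediately gives \(\ord_x(\alpha) = \ord_x(-\alpha)\) from Definition \ref{defn:ord-x}.

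Combining with Corollary \ref{cor:opp-ord}, we obtain \(\ord_x(\alpha) = -\ord_x(\alpha)\). Now the elementary finish: write \(\ord_x(\alpha) = c + \Z_\alpha\) for any representative \(c\). The condition \(c + \Z_\alpha = -c + \Z_\alpha\) is equivalent to \(2c \in \Z_\alpha = e_\alpha\inv\Z\), so \(c \in \tfrac12 e_\alpha\inv\Z\). Either \(c \in e_\alpha\inv\Z = \Z_\alpha\), giving \(\ord_x(\alpha) = \Z_\alpha\), or \(c \in \tfrac12 e_\alpha\inv + e_\alpha\inv\Z\), giving \(\ord_x(\alpha) = \Z_\alpha + \tfrac12 e_\alpha\inv\).

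The only potentially delicate step is confirming that \(\sigma_\alpha\) genuinely preserves the relevant filtration, i.e., that passage from \sepfield to \(\field_\alpha\) in the definition of \(\pmb\fg_\alpha(\field_\alpha)_{x, i}\) is compatible with the \(\sigma_\alpha\)-action; but this is standard since \(\sigma_\alpha\) fixes \(x\) and acts on \(\field_\alpha\), and the Moy--Prasad filtration is Galois-equivariant in the sense used already in Remark \ref{rem:ord-unram}.
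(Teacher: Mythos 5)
Your proof is correct, and it supplies the argument that the paper leaves implicit (the corollary is stated without proof after Corollary \ref{cor:opp-ord}). The chain of reasoning — use the semilinear action of \(\sigma_\alpha\) on \(\pmb\fg(\field_\alpha)\), preservation of the Moy--Prasad filtration at a \(\Gamma\)-fixed point, and the identification \(\sigma_\alpha\colon \pmb\fg_\alpha \to \pmb\fg_{-\alpha}\) to obtain \(\ord_x(\alpha) = \ord_x(-\alpha)\), then combine with \(\ord_x(-\alpha) = -\ord_x(\alpha)\) and the torsor structure — is exactly the intended one. A slightly slicker packaging of the first step: \(\ord_x\) depends only on the \(\Gamma\)-orbit of a root (which is the same equivariance you are invoking, and is already used tacitly in the paper's notational conventions), and symmetry means \(-\alpha\) is in the orbit of \(\alpha\); but that is not a different argument, just a different phrasing of your \(\sigma_\alpha\) step.
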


\subsection{Subgroups associated to functions on root systems}
\label{sec:MP-mixed-depth}

Most of this section is devoted establishing analogues of
the results of
\xcite{adler-spice:good-expansions}*
	{\S\xref{sec:concave_gen}}
for subgroups of \fg attached to functions on root systems.
The proofs here are considerably easier, because the group
structure of \fg is much simpler than that of \(G\),
and because Galois descent behaves better for algebras
than groups,
so that we get to ignore many technical complications.
For example, we do not have to restrict our attention to
\emph{concave} functions
\xcite{adler-spice:good-expansions}*
	{Definition \xref{defn:concave}},
or intersect with a parahoric subalgebra as in
\xcite{adler-spice:good-expansions}*
	{Definition \xref{defn:vGvr}}.

\begin{defn}
\label{defn:vgvr}
If \map f{\Gamma\bslash\wtilde\Root}{\tR \cup \sset{+\infty}}
is any function (not necessarily concave),
then put
\[
\lsub\bT\fg_{x, f}
= \Bigl(
	\bigoplus_{\substack
		{\alpha \in \wtilde\Root \\
		f(\alpha) < +\infty}
	}
		\pmb\fg_\alpha(\field_\alpha)_{x, f(\alpha)}
\Bigr)^\Gamma,
\]
with the notation of Definition \ref{defn:ord-x}.
(See also \cite{adler-spice:explicit-chars}*
	{\S\xref{sec:buildings}, p.~1144}.)
\end{defn}

The group algebra \(\pint[T]\) of the torus \(T\)
acts naturally on \fg, on \(\pmb\fg_\alpha\),
and on each \(\lsub T\fg_{x, f}\)
(\via their natural \pint-module structures,
and the adjoint action of \(T\)).
We prove a few basic results about its action on groups of
the form \(\lsub\bT\fg_{x, f}\) that will be needed in the
proof of Lemma \ref{lem:gxf-card}.

\begin{lem}
\label{lem:gxf-by-orbit}
There is a canonical \(\pint[T]\)-module isomorphism
\[
\fg \cong \bigoplus_{\alpha \in \Gamma\bslash\wtilde\Root}
	\pmb\fg_\alpha(\field_\alpha)
\]
that restricts,
for any function
\map f{\Gamma\bslash\wtilde\Root}{\tR \cup \sset{+\infty}},
to an isomorphism
\[
\lsub\bT\fg_{x, f}
\cong \bigoplus_{\alpha \in \Gamma\bslash\wtilde\Root}
	\pmb\fg_\alpha(\field_\alpha)_{x, f(\alpha)}.
\]
\end{lem}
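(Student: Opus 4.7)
The plan is to derive the decomposition from the absolute weight-space decomposition of $\pmb\fg_\sepfield$ under $\bT_\sepfield$ and then descend to $\field$ by taking $\Gamma$-invariants, grouping weight spaces into $\Gamma$-orbits. First, I would write
\[
\pmb\fg_\sepfield
	= \bigoplus_{\alpha \in \wtilde\Root} \pmb\fg_{\alpha,\sepfield}
	= \bigoplus_{\omega \in \Gamma\bslash\wtilde\Root}
		\Bigl(\bigoplus_{\alpha \in \omega} \pmb\fg_{\alpha,\sepfield}\Bigr),
\]
observing that because $\bT$ is defined over $\field$, the group $\Gamma$ acts on the decomposition by permuting the $\pmb\fg_{\alpha,\sepfield}$ according to its action on $\wtilde\Root$; in particular each inner sum is $\Gamma$-stable, so taking $\Gamma$-fixed points commutes with the outer direct sum.

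Second, for each orbit $\omega$ and chosen representative $\alpha \in \omega$, $\Gamma$ acts transitively on $\omega$ with stabiliser $\Gamma_\alpha$, so the usual Shapiro-lemma-type map
\[
\pmb\fg_{\alpha,\sepfield}^{\Gamma_\alpha}
	\xrightarrow{\;\;X \;\mapsto\; (\gamma X)_{\gamma\Gamma_\alpha}\;\;}
\Bigl(\bigoplus_{\beta \in \omega} \pmb\fg_{\beta,\sepfield}\Bigr)^{\!\Gamma}
\]
is an isomorphism of $\pint[T]$-modules (with inverse the projection onto the $\alpha$-summand). Since $\pmb\fg_\alpha$ is defined over $\field_\alpha$, Galois descent identifies $\pmb\fg_{\alpha,\sepfield}^{\Gamma_\alpha}$ with $\pmb\fg_\alpha(\field_\alpha)$. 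Summing over $\omega$ gives the asserted decomposition
\(\fg \cong \bigoplus_{\omega} \pmb\fg_\alpha(\field_\alpha)\).
The $\pint[T]$-equivariance is automatic from the fact that $T \subseteq \bT(\sepfield)^\Gamma$ preserves each weight space and commutes with $\Gamma$.

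Third, for the restriction to the filtration piece $\lsub\bT\fg_{x,f}$, I invoke the standard fact (\cite{yu:supercuspidal}*{Corollary 2.3}, together with \cite{moy-prasad:k-types}*{\S3.2}) that for a \tamefield-split maximal torus $\bT$ and a point $x$ in its apartment one has
\[
\pmb\fg(\sepfield)_{x,r}
	= \bigoplus_{\alpha \in \wtilde\Root}
		\bigl(\pmb\fg_{\alpha,\sepfield} \cap \pmb\fg(\sepfield)_{x,r}\bigr),
\]
so that intersecting our $\Gamma$-invariant decomposition with each filtration level commutes with the direct-sum grouping. In particular, for $f$ a function on $\Gamma\bslash\wtilde\Root$ the summand indexed by $\omega$ contributes exactly $\pmb\fg_\alpha(\field_\alpha)_{x,f(\alpha)}$, giving the second claimed isomorphism.

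The only real obstacle is bookkeeping: one must check that the ``spread across the orbit'' map is independent, up to canonical $\Gamma$-translation, of the choice of representative $\alpha \in \omega$, and that its image genuinely lies in the $\Gamma$-invariants used in Definition \ref{defn:vgvr}. Both are formal once one writes the action out; no analytic input beyond the orbit-wise compatibility of Moy--Prasad filtrations with the weight decomposition is required.
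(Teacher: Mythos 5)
Your proposal is correct and follows essentially the same route as the paper, which simply writes down the inverse isomorphism $(X_\alpha)_{\alpha \in \Gamma\backslash\wtilde\Root} \mapsto \sum_\alpha \sum_{\sigma \in \Gamma/\Gamma_\alpha} \sigma X_\alpha$ and asserts that the rest is a straightforward check. Your ``spread across the orbit'' map $X \mapsto (\gamma X)_{\gamma\Gamma_\alpha}$, after applying Galois descent to identify $\pmb\fg_{\alpha,\sepfield}^{\Gamma_\alpha}$ with $\pmb\fg_\alpha(\field_\alpha)$, is precisely that inverse map, and your verification of $\pint[T]$-equivariance and compatibility with the filtration is the ``straightforward check'' the paper leaves implicit. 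The one point you should phrase more carefully is the appeal to the weight-space decomposition of the filtration: as the paper's Definitions \ref{defn:ord-x} and \ref{defn:vgvr} show, the decomposition is most naturally taken over the intermediate fields $\field_\alpha$ (via \cite{yu:supercuspidal}*{Corollary 2.3}) rather than over $\sepfield$, which sidesteps questions about Moy--Prasad filtrations over non-complete fields. This is a presentational refinement, not a gap.
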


\begin{proof}
The inverse isomorphism sends
\((X_\alpha)_{\alpha \in \Gamma\bslash\wtilde\Root}\)
to
\(\sum_{\alpha \in \Gamma\bslash\wtilde\Root}
	\sum_{\sigma \in \Gamma/\Gamma_\alpha}
		\sigma X_\alpha\).
It is straightforward to check that its restriction has the
desired properties.
\end{proof}

\begin{cor}
\label{cor:gxf-inv}
Suppose that
\map{f, g}{\Gamma\bslash\wtilde\Root}{\tR \cup \sset{+\infty}},
with
\begin{itemize}
\item \(f(0) = g(0)\),
\item \(f(\alpha) \le g(\alpha)\)
	for all \(\alpha \in \wtilde\Root\),
and
\item \(g(\alpha) < +\infty\)
	whenever \(f(\alpha) < +\infty\).
\end{itemize}
Then there is a non-canonical \pint-module isomorphism
\(\lsub\bT\fg_{x, f:g}
	\cong \lsub\bT\fg_{x, \tilde g:\tilde f}\),
where
\[
\tilde f(\alpha) = \begin{cases}
\widetilde{f(-\alpha)}, & f(-\alpha) < +\infty \\
+\infty,                & \text{otherwise,}
\end{cases}
\]
and similarly for \(\tilde g\).
\end{cor}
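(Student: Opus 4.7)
The strategy is to decompose each side of the claimed isomorphism as a direct sum over $\Gamma$-orbits of weights by applying Lemma~\ref{lem:gxf-by-orbit}, then match summands via the duality of Corollary~\ref{cor:duality}, and finally re-index along the involution $\alpha \mapsto -\alpha$ of $\Gamma\bslash\wtilde\Root$.

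First, I would observe that the three hypotheses together force $f$ and $g$ to share a common ``support'' in the sense that $f(\alpha) < +\infty$ if and only if $g(\alpha) < +\infty$: the forward implication is the explicit third bullet, while the reverse follows from $f \le g$ pointwise (nothing exceeds $+\infty$). Lemma~\ref{lem:gxf-by-orbit} then furnishes compatible orbit decompositions of $\lsub\bT\fg_{x, f}$ and of its subgroup $\lsub\bT\fg_{x, g}$, yielding a canonical $\pint[T]$-module isomorphism
\[
\lsub\bT\fg_{x, f:g}
\cong \bigoplus_{\alpha \in \Gamma\bslash\wtilde\Root}
\pmb\fg_\alpha(\field_\alpha)_{x, f(\alpha):g(\alpha)},
\]
in which the zero-orbit summand vanishes thanks to $f(0) = g(0)$, and any $\alpha$ with $f(\alpha) = +\infty$ contributes nothing.

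For each remaining orbit, Corollary~\ref{cor:duality} supplies a non-canonical $\pint_\alpha$-module (hence $\pint$-module) isomorphism
\[
\pmb\fg_\alpha(\field_\alpha)_{x, f(\alpha):g(\alpha)}
\cong \pmb\fg_{-\alpha}(\field_\alpha)_{x, \widetilde{g(\alpha)}:\widetilde{f(\alpha)}}.
\]
Since negation commutes with the $\Gamma$-action on $\wtilde\Root$, it descends to an involution of $\Gamma\bslash\wtilde\Root$, along which $\field_{-\alpha} = \field_\alpha$. Re-indexing the direct sum by $\beta = -\alpha$ and unwinding the definitions of $\tilde f$ and $\tilde g$, I recognise the result as the Lemma~\ref{lem:gxf-by-orbit} decomposition of $\lsub\bT\fg_{x, \tilde g:\tilde f}$; composing these isomorphisms yields the asserted $\pint$-module identification.

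There is no serious obstacle beyond bookkeeping: all analytic content lives in the duality of Corollary~\ref{cor:duality}, and Lemma~\ref{lem:gxf-by-orbit} is used only to organise the Galois invariants orbit-by-orbit. The point that rewards care is verifying that $f$ and $g$ share a support (so no summand is stranded in the quotient) and that the zero-orbit contribution is trivial on both sides---on the right because $\tilde f(0) = \widetilde{f(0)} = \widetilde{g(0)} = \tilde g(0)$ when $f(0) < +\infty$, and otherwise the zero orbit is omitted from both decompositions.
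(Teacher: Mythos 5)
Your proof is correct and follows essentially the same route as the paper: decompose orbit-by-orbit via Lemma~\ref{lem:gxf-by-orbit}, apply the duality of Corollary~\ref{cor:duality} to each summand, and re-index by $\alpha \mapsto -\alpha$. You spell out the bookkeeping (shared support of $f$ and $g$, triviality of the zero-orbit) more explicitly than the paper's one-line proof, but the argument is the same.
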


\begin{proof}
By Corollary \ref{cor:duality},
there is a non-canonical \pint-module isomorphism
\[
\bigoplus_{\alpha \in \Gamma\bslash\wtilde\Root}
	\pmb\fg_\alpha(\field_\alpha)
		_{x, f(\alpha):g(\alpha)}
\cong
\bigoplus_{\alpha \in \Gamma\bslash\wtilde\Root}
	\pmb\fg_{-\alpha}(\field_\alpha)
		_{x, \tilde g(-\alpha):\tilde f(-\alpha)},
\]
which, by Lemma \ref{lem:gxf-by-orbit}, may be transferred
to the desired isomorphism.
\end{proof}

\begin{lem}
\label{lem:gxf-shift}
If \((r_\alpha)_{\alpha \in \Gamma\bslash\Root}\)
is a collection of rational numbers such that
\(r_\alpha \in \Z_\alpha\)
	for all \(\alpha \in \Root\),
then there is a
non-canonical \(\pint[T]\)-module automorphism of \fg
that restricts,
for any function
\map f{\Gamma\bslash\wtilde\Root}{\tR \cup \sset{+\infty}},
to an isomorphism
\[
\lsub\bT\fg_{x, f}
\cong \lsub\bT\fg_{x, f + r}
\]
(where
\((f + r)(0) \ldef f(0)\)
and
\((f + r)(\alpha) \ldef f(\alpha) + r_\alpha\)
for all \(\alpha \in \Root\)),
and that has determinant there of valuation
\(\displaystyle\sum_{\substack{
	\alpha \in \Root \\
	f(\alpha) < +\infty
}} r_\alpha\).
\end{lem}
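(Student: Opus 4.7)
The plan is to construct the automorphism orbit-by-orbit via the decomposition of Lemma \ref{lem:gxf-by-orbit}. The key observation is that each root space \(\pmb\fg_\alpha(\field_\alpha)\) is \(1\)-dimensional over \(\field_\alpha\), so multiplication by any element of \(\field_\alpha^\times\) of valuation \(r_\alpha\) automatically commutes with the \(T\)-action (which is \(\field_\alpha\)-linear) and shifts the Moy--Prasad filtration by exactly \(r_\alpha\). The construction thus reduces to choosing, in a \(\Gamma\)-equivariant way, scalars \(\pi_\alpha \in \field_\alpha^\times\) of valuation \(r_\alpha\).

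Concretely, for each orbit \(\omega = \Gamma\cdot\alpha_\omega \in \Gamma\bslash\Root\) (with \(\alpha_\omega\) any chosen representative), pick \(\pi_\omega \in \field_{\alpha_\omega}^\times\) with \(\ord(\pi_\omega) = r_\omega\); this is possible because \(r_\omega \in \Z_\omega = \ord(\field_\omega^\times)\) by hypothesis. For \(\beta = \sigma\alpha_\omega\) in the same orbit, set \(\pi_\beta \ldef \sigma(\pi_\omega) \in \field_\beta^\times\); this is well defined since \(\Gamma_{\alpha_\omega}\) fixes \(\field_{\alpha_\omega}\) pointwise. Define an \(\sepfield\)-linear map on \(\pmb\fg(\sepfield) = \pmb\ft(\sepfield) \oplus \bigoplus_{\alpha \in \Root}\pmb\fg_\alpha(\sepfield)\) to be the identity on \(\pmb\ft(\sepfield)\) and multiplication by \(\pi_\alpha\) on each \(\pmb\fg_\alpha(\sepfield)\). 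By the equivariance of the \(\pi\)'s, this map is \(\Gamma\)-equivariant and so descends to an \(\field\)-linear automorphism \(\phi\) of \(\fg\); and \(\phi\) is \(\pint[T]\)-equivariant because \(T\)-translation is \(\field_\alpha\)-linear on each weight space, while \(\phi\) acts there by a scalar.

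For the restriction claim, a choice of \(\field_\alpha\)-basis of \(\pmb\fg_\alpha(\field_\alpha)\) identifies this space with \(\field_\alpha\) so that the Moy--Prasad filtration corresponds to a translate of the \(\pp_\alpha\)-adic filtration (as implicit in the proof of Corollary \ref{cor:duality}); hence multiplication by \(\pi_\alpha\) carries \(\pmb\fg_\alpha(\field_\alpha)_{x, i}\) isomorphically onto \(\pmb\fg_\alpha(\field_\alpha)_{x, i + r_\alpha}\). Applying this summand-by-summand in the decomposition of Lemma \ref{lem:gxf-by-orbit} and passing to \(\Gamma\)-invariants yields the required isomorphism \(\lsub\bT\fg_{x, f} \cong \lsub\bT\fg_{x, f + r}\).

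For the determinant, on \(\pmb\fg_\omega(\field_\omega)\) --- which is \(n_\omega\)-dimensional over \(\field\) --- the map \(\phi\) is multiplication by \(\pi_\omega\), whose \(\field\)-linear determinant is \(N_{\field_\omega/\field}(\pi_\omega)\), of valuation \(n_\omega r_\omega\). Reading ``determinant there'' as the \(\field\)-linear determinant of \(\phi\) restricted to the \(\field\)-span of \(\lsub\bT\fg_{x,f}\), namely \(\bigoplus_{\omega \in \Gamma\bslash\wtilde\Root,\,f(\omega)<+\infty}\pmb\fg_\omega(\field_\omega)\), and using \(\#(\Gamma\cdot\alpha) = n_\alpha\), the total valuation is \(\sum_{\omega,\,f(\omega)<+\infty} n_\omega r_\omega = \sum_{\alpha \in \Root,\,f(\alpha)<+\infty} r_\alpha\). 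I anticipate no real obstacles here; the only care required is the \(\Gamma\)-equivariance of the family \((\pi_\alpha)\) and the bookkeeping that converts the sum over orbits into the claimed sum over individual roots.
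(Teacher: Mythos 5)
Your proposal is correct and takes essentially the same approach as the paper's own proof. The paper also chooses, for each orbit representative $\alpha$, a scalar $c_\alpha \in \field_\alpha$ with $\ord(c_\alpha) = r_\alpha$, defines the automorphism by multiplication by $c_\alpha$ on the corresponding summand of the decomposition in Lemma \ref{lem:gxf-by-orbit}, and ``transfers'' it to $\fg$; you merely make explicit (via working over $\sepfield$, Galois-equivariance of the family $(\pi_\beta)$, and descent) the content of that transfer, and spell out the norm/orbit-size bookkeeping $n_\omega = \smcard{\Gamma\dota\alpha}$ behind the determinant computation, which the paper leaves to the reader.
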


\begin{proof}
By hypothesis, there is a collection
\((c_\alpha)_{\alpha \in \Gamma\bslash\Root}\)
such that \(c_\alpha \in \field_\alpha\)
and \(\ord(c_\alpha) = r_\alpha\)
for all \(\alpha \in \Root\).
By Lemma \ref{lem:gxf-by-orbit},
the automorphism of
\(\bigoplus_{\alpha \in \Gamma\bslash\wtilde\Root}
	\pmb\fg_\alpha(\field_\alpha)\)
that is the identity on the \(\alpha = 0\) summand,
and that multiplies the summand corresponding to
\(\alpha \in \Root\) by \(c_\alpha\),
may be transferred to the desired automorphism of \fg.
\end{proof}

Lemma \ref{lem:gxf-card} is crucial in the calculation
of Proposition \ref{prop:const}, where it accounts for the
appearance in the character formula Theorem \ref{thm:ratl}
of a blow-up controlled by the discriminant.

\begin{lem}
\label{lem:gxf-card}
Suppose that
\map{f, g}{\Gamma\bslash\wtilde\Root}{\R \cup \sset{+\infty}}
are functions such that
	\begin{itemize}
	\item \(f(\alpha) \le g(\alpha)\)
for all \(\alpha \in \wtilde\Root\),
	\item \(f(\alpha) < +\infty\)
whenever \(f(-\alpha) < +\infty\), and similarly for
\(g(\alpha)\),
	\item \(g(\alpha) < +\infty\)
whenever \(f(\alpha) < +\infty\),
	and
	\item
\(f(\alpha) + f(-\alpha), g(\alpha) + g(-\alpha)
\in \Z_\alpha \cup \sset{+\infty}\)
for all \(\alpha \in \Root\).
	\end{itemize}
Then
\begin{multline*}
\card{\lsub\bT\fg_{x, f:g}}\dotm
\card{\lsub\bT\fg_{x, {f+}:{g+}}} \\
= \card{\ft_{f(0):g(0)}}\card{\ft_{{f(0)+}:{g(0)+}}}\dotm
\prod_{\substack{\alpha \in \Root \\ f(\alpha) < +\infty}}
	q^
		{(g(\alpha) + g(-\alpha))
			- (f(\alpha) + f(-\alpha))}.
\end{multline*}
\end{lem}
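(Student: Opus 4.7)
The plan is to apply Lemma \ref{lem:gxf-by-orbit} so that both factors on the left split as products indexed by Galois orbits \(\omega \in \Gamma\bslash\wtilde\Root\), and then match the contribution of each orbit with the corresponding factors on the right. The orbit \(\{0\}\) yields exactly \(\card{\ft_{f(0):g(0)}}\card{\ft_{f(0)+:g(0)+}}\). For each orbit \(\omega\) of roots, the hypotheses force the four values \(f(\pm\omega), g(\pm\omega)\) to be simultaneously finite or simultaneously infinite; orbits with all four infinite contribute trivially to both sides, so one may assume all four are finite.

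For such an orbit \(\omega\), with representative \(\alpha\), the space \(\pmb\fg_\alpha(\field_\alpha)\) is one-dimensional over \(\field_\alpha\), so \(\card{\pmb\fg_\alpha(\field_\alpha)_{x, i:j}} = q_\alpha^{\card{\ord_x\alpha \cap [i, j)}}\). The contribution of \(\omega\) to the left-hand side is therefore \(q^{f_\alpha N(\omega)}\), where
\[
N(\omega) = \card{\ord_x\alpha \cap [f(\omega), g(\omega))} + \card{\ord_x\alpha \cap (f(\omega), g(\omega)]}.
\]
Since the right-hand side is a product over individual roots, and \(\card\omega = n_\alpha = e_\alpha f_\alpha\), the desired equality reduces to the counting identity
\[
N(\omega) + N(-\omega) = 2 e_\alpha\bigl((g(\omega) - f(\omega)) + (g(-\omega) - f(-\omega))\bigr)
\]
for non-symmetric \(\omega\), and \(N(\omega) = 2 e_\alpha(g(\omega) - f(\omega))\) for symmetric \(\omega\).

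To prove these, write \(N(\omega) = A_\omega + B_\omega + 2 C_\omega\), with \(A_\omega, B_\omega \in \{0, 1\}\) indicating whether \(f(\omega), g(\omega)\) lie in \(\ord_x\alpha\), and \(C_\omega = \card{\ord_x\alpha \cap (f(\omega), g(\omega))}\). For non-symmetric \(\omega\), Corollary \ref{cor:opp-ord} combined with the hypothesis \(f(\omega) + f(-\omega), g(\omega) + g(-\omega) \in \Z_\alpha\) forces \(A_\omega = A_{-\omega}\) and \(B_\omega = B_{-\omega}\) (after identifying \(\ord_x(-\alpha) = -\ord_x\alpha\) as a representative choice for \(-\omega\)); a brief case analysis on whether \(e_\alpha(g(\pm\omega) - f(\pm\omega))\) is integral, noting that when they are not integral their fractional parts sum to an integer by the hypothesis, then yields the identity. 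For symmetric \(\omega\), Corollary \ref{cor:ord-symm} gives \(\ord_x\alpha \subseteq \frac 1 2 \Z_\alpha\), while the hypothesis forces \(f(\omega), g(\omega) \in \frac 1 2 \Z_\alpha\); each endpoint then lies in \(\ord_x\alpha\) or in the shifted coset \(\ord_x\alpha + \frac 1{2 e_\alpha}\), and the four resulting possibilities each verify the identity directly. The main obstacle is managing the bookkeeping in the non-symmetric case, where endpoints need not lie in \(\ord_x\alpha\) at all, but no ingredient deeper than the above case analysis is required.
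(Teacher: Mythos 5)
Your proof is correct, and it takes a genuinely different route from the paper's. The paper keeps the two index factors intact, reduces to the case \(f(0)=g(0)\) by splitting off the torus piece, and then uses the duality of Corollary~\ref{cor:gxf-inv} together with the shift automorphisms of Lemma~\ref{lem:gxf-shift} to replace \(\lsub\bT\fg_{x, {f+}:{g+}}\) by \(\lsub\bT\fg_{x, g:((\widetilde{f+})+r)}\), so the two factors telescope into the single quotient \(\lsub\bT\fg_{x, f:((\widetilde{f+})+r)}\); this is computed by a determinant via Lemma~\ref{lem:gxf-shift} once more. You instead push the orbit decomposition of Lemma~\ref{lem:gxf-by-orbit} all the way through and reduce the statement to an explicit counting identity for the number of jumps of the filtration on a one-dimensional \(\field_\alpha\)-space in half-open intervals, which you verify using \(\ord_x(-\alpha) = -\ord_x\alpha\) (Corollary~\ref{cor:opp-ord}) in the non-symmetric case and the torsor description of \(\ord_x\alpha\) in \(\tfrac12\Z_\alpha\) (Corollary~\ref{cor:ord-symm}) in the symmetric case. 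Both routes ultimately rest on Lemma~\ref{lem:opp-ord}; the paper's buys economy of argument by reusing the module-isomorphism lemmas, while yours is more elementary and self-contained at the cost of the floor/ceiling bookkeeping you flag. Two small points: the key cancellation in the non-symmetric case is that \(e_\alpha\bigl(f(\omega)+f(-\omega)\bigr)\) and \(e_\alpha\bigl(g(\omega)+g(-\omega)\bigr)\) are integers, so the endpoint defects at \(f(\omega)\) and \(f(-\omega)\) (and at \(g(\omega)\) and \(g(-\omega)\)) match exactly --- stating this via \(\lceil\cdot\rceil+\lfloor\cdot\rfloor\) is cleaner than a raw case split; and the hypothesis \(f\le g\) already forces \(f(\alpha)=+\infty\Rightarrow g(\alpha)=+\infty\), which together with the stated converse makes your claim that the four values on a \(\pm\)-orbit pair are simultaneously finite exactly right.
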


\begin{rem}
\label{rem:gxf-card}
In our application of Lemma \ref{lem:gxf-card}
(see the proof of Proposition \ref{prop:const}),
we have that
\(f(0) = g(0)\)
and that
\(f\) and \(g\) are \(\pm\Gamma\)-,
not just \(\Gamma\)-, invariant;
so then the condition on their values becomes that
\(f(\alpha), g(\alpha) \in \tfrac1 2\Z_\alpha\)
for all \(\alpha \in \Root\),
and the cardinality becomes
\[
\prod_{\substack{
	\alpha \in \langle\pm1\rangle\bslash\Root \\
	f(\alpha) < +\infty
}} q^{2(g(\alpha) - f(\alpha))}.
\]
\end{rem}

\begin{proof}
We have that
\(\lsub\bT\fg_{x, f:g}
= \ft_{x, f(0):g(0)}
\oplus \fg_{x, f_\Root:g_\Root}\),
where \(f_\Root\) and \(g_\Root\) agree with \(f\) and \(g\),
respectively, on \Root,
and take the value \(g(0)\) at \(0\);
and similarly for \(\lsub\bT\fg_{x, {f+}:{g+}}\).
Thus, it suffices to consider the case where \(f(0) = g(0)\).
In this case, put \(r_\alpha = 0\)
if \(f(\alpha) = +\infty\),
and
\[
r_\alpha
= g(\alpha) - (\widetilde{g+})(\alpha)
= g(\alpha) + g(-\alpha) \in \Z_\alpha
\]
otherwise.
By Corollary \ref{cor:gxf-inv}
and Lemma \ref{lem:gxf-shift},
we have that
\[
\lsub\bT\fg_{x, {f+}:{g+}}
\cong \lsub\bT\fg_{x, \widetilde{g+}:\widetilde{f+}}
\cong \lsub\bT\fg
	_{x, ((\widetilde{g+}) + r):((\widetilde{f+}) + r)}
= \lsub\bT\fg
	_{x, g:((\widetilde{f+}) + r)},
\]
hence that
\[
\card{\lsub\bT\fg_{x, f:g}}
	\dotm\card{\lsub\bT\fg_{x, {f+}:{g+}}}
= \card{\lsub\bT\fg_{x, f:g}}
	\dotm\smcard{\lsub\bT\fg_{x, g:((\widetilde{f+}) + r)}}
= \smcard{\lsub\bT\fg_{x, f:((\widetilde{f+}) + r)}}.
\]
Now, whenever \(f(\alpha) < +\infty\), we have that
\[
s_\alpha
\ldef ((\widetilde{f+}) + r)(\alpha) - f(\alpha)
= (g(\alpha) + g(-\alpha)) - (f(\alpha) + f(-\alpha))
\]
lies in \(\Z_\alpha\),
%so that there is an element
%\(c_\alpha \in \field_\alpha\)
%with \(\ord(c_\alpha) = s_\alpha\),
%for all \(\alpha \in \Root\).
%By Lemma \ref{lem:gxf-by-orbit},
%\[
%\lsub\bT\fg_{x, f:((\widetilde{f+}) + r)}
%\cong
%\bigoplus_{\alpha \in \Gamma\bslash\wtilde\Root}
%	\pmb\fg_\alpha(\field_\alpha)_{x, f(\alpha)}
%		/c_\alpha\pmb\fg_\alpha(\field_\alpha)_{x, f(\alpha)};
%\]
%so, since \(\pmb\fg_\alpha(\field_\alpha)_{x, f(\alpha)}\)
%is a free \(1\)-dimensional \(\pint_\alpha\)-module
%whenever \(\alpha \in \Root\), we have that
so, by Lemma \ref{lem:gxf-shift} again,
\[
\smcard{\lsub\bT\fg_{x, f:((\widetilde{f+}) + r)}}
%= \prod_{\alpha \in \Gamma\bslash\wtilde\Root}
%	q_\alpha^{e_\alpha s_\alpha}
%= \prod_{\alpha \in \Gamma\bslash\wtilde\Root}
%	q^{n_\alpha s_\alpha}
= \bigabs{\det\bigl(
	\anonmap{\lsub\bT\fg_{x, f}}
	{\lsub\bT\fg_{x, ((\widetilde{f+}) + r)}}
\bigr)}\inv
= \prod_{\alpha \in \Root}
	q^{s_\alpha}.\qedhere
\]
	%% Why is the absolute value of the determinant of an isomorphism
	%% between lattices the inverse of the index?
	%% By composing with a map that scales basis elements appropriately,
	%% we see that it suffices to handle the case of automorphisms.
	%% Automorphisms are invertible, hence have invertible
	%% determinant in the ring of integers; \ie, the determinant
	%% has norm 1.
\end{proof}

\numberwithin{thm}{subsection}

\section{Character formul\ae}
\label{sec:ratl}

As in \S\ref{sec:MP}, let \bG be a connected, reductive
\field-group.
This section is devoted to making more explicit the
computations of \cite{adler-spice:explicit-chars}
of the characters of supercuspidal representations of \(G\).
To use those computations, we must assume that \bG
satisfies \xcite{adler-spice:good-expansions}*
	{Hypotheses \xref{hyp:conn-cent}
and \xref{hyp:good-weight-lattice}},
so we do so for the remainder of the paper.
In order that the theory not be vacuous, we also assume for
the remainder of the paper that
\bG possesses a \tamefield-split, maximal torus,
so that \bG also satisfies
\xcite{adler-spice:good-expansions}*
	{Hypotheses \xref{hyp:reduced}
and \xref{hyp:torus-H1-triv}}
(see \xcite{adler-spice:good-expansions}*
	{Remark \xref{rem:when-hyps-hold}}).

We begin by recalling Yu's construction \cite{yu:supercuspidal}
of the so called `tame' supercuspidal representations of \(G\);
the necessary notation is introduced in \S\ref{sec:char-notn}.
The next four subsections handle the ingredients of the
character formul{\ae} of \cite{adler-spice:explicit-chars},
as described in the introduction.
Finally, in \S\ref{sec:char-char}, we tie together these
results to give an inductive formula (Theorem \ref{thm:ratl})
that computes the character of a tame supercuspidal
representation in terms of the character of a supercuspidal
representation of a smaller group,
together with what we believe to be
a minimal collection of auxiliary data
(the fourth roots of unity occurring in
Definition \ref{defn:signs}).

\subsection{Notation}
\label{sec:char-notn}

We begin by defining,
inductively (on the semisimple rank of \bG, say),
a parametrising set for the supercuspidal
representations that we consider.

\begin{defn}
\label{defn:cusp-pair}
A \term[cuspidal pair]{cuspidal \(G\)-pair} is a pair
\((\bG', \pi')\), where
	\begin{itemize}
	\item \(\bG'\) is a (\field-)subgroup of \bG such that
\(Z(\bG')/Z(\bG)\) is (\field-)anisotropic,
	\item \(\bG'_\tamefield\) is a Levi subgroup of
\(\bG_\tamefield\),
	\item \(\pi'\) is a tame supercuspidal representation
of \(G'\),
	and
	\item there is a character \(\phi\) of \(G'\)
such that the depth \cite{moy-prasad:k-types}*{Theorem 5.2}
of \(\pi' \otimes \phi\inv\) is strictly less than that of
\(\pi'\).
	\end{itemize}
The \term{depth} of the cuspidal pair is the
depth of \(\pi'\).
(Recall that a \term{tame} supercuspidal representation is one that
arises \via Yu's construction \cite{yu:supercuspidal}*{\S4}
from a datum involving generic characters.
See Definition \ref{defn:cusp-ind}.)

The pair is
\term[cuspidal pair!toral]{toral}
if \(\bG'\) is a torus,
and
\term[cuspidal pair!compact]{compact}
if \(\bG'/Z(\bG)\) is \field-anisotropic.
We sometimes write \((G', \pi')\) instead of
\((\bG', \pi')\).
(Since \(G'\) is Zariski-dense in \(\bG'\)
\cite{borel:linear}*{Corollary 18.3},
this notational convenience causes no ambiguity.)
\end{defn}

%Note that a compact, toral pair
%(Definition \ref{defn:stab-conj:char})
%is a (compact) cuspidal pair.

The somewhat artificial condition on depth in
Definition \ref{defn:cusp-pair} ensures that
the character \(\phi_{d - 1}\) of
Definition \ref{defn:cusp-ind} is not trivial,
hence that the `induction' \(\YuUp_{G'}^G \pi'\) defined
there makes sense.

\begin{defn}
\label{defn:cusp-ind}
By the definition of a tame supercuspidal representation,
if \((\bG', \pi')\) is a depth-\(r\), cuspidal \(G\)-pair,
then \(\pi'\) is associated to a (non-unique) generic
\(G'\)-datum
\[
\bigl(
	(\bG^0 \subsetneq \dotsb \subsetneq \bG^{d - 1} = \bG'),
	x,
	(r_0, \dotsc, r_{d - 1}),
	(\phi_0, \dotsc, \phi_{d - 1}),
	\rho
\bigr),
\]
in the sense of \cite{jkim:exhaustion}*{\S0.3}
(see also \cite{yu:supercuspidal}*{\S3}).
%where each \(\phi_i\) is generic, in the sense of
%\cite{yu:supercuspidal}*{\S9}.
%	%% This isn't part of the original definition!
%	%% J.-K. assumes only
%	%% \cite{yu:supercuspidal}*{\textbf{SC1}--\textbf2}.
%	%% ... but we really do mean to assume it here.
In particular, \(r_{d - 1} = r\);
and, by Definition \ref{defn:cusp-pair}, \(\phi_{d - 1} \ne 1\).
Then \((G', \pi', \phi_{d - 1}, x, X^*)\) is called an
\term
	[cuspidal pair!corresponding expanded quintuple]
	{expanded cuspidal \(G\)-quintuple}
corresponding to \((G', \pi')\) exactly when
\(X^*\) is (\((\bG', \bG)\)-)generic,
in the sense of \cite{yu:supercuspidal}*{\S8, p.~596},
and realises \(\phi_{d - 1}\) on \(G'_{x, r}\)
(relative to \(x\)),
in the sense of \cite{yu:supercuspidal}*{\S5, p.~593}.
By definition, every cuspidal \(G\)-pair has a corresponding
expanded cuspidal \(G\)-quintuple.

Write \mterm{\YuUp_{G'}^G \pi'} for the
supercuspidal representation of \(G\) constructed by
Yu \cite{yu:supercuspidal}*{\S4} from the generic \(G\)-datum
\[
\bigl(
	(\bG^0 \subsetneq \dotsb \subsetneq
		\bG^{d - 1} = \bG' \subsetneq \bG^d = \bG),
	x,
	(r_0, \dotsc, r_{d - 1}, r_{d - 1}),
	(\phi_0, \dotsc, \phi_{d - 1}, 1),
	\rho
\bigr).
\]
By \cite{hakim-murnaghan:distinguished-tame-sc}*{Theorem 6.6},
under a centrality condition analogous to our
Definition \ref{defn:cent-cusp-pair} below
\cite{hakim-murnaghan:distinguished-tame-sc}*
	{Hypothesis C(\bG) and Remark 2.49},
the representation \(\YuUp_{G'}^G \pi'\) depends only on \(\pi'\),
not on the choice of generic \(G\)-datum;
but this deep fact is used here only as a notational
convenience.
\end{defn}

\begin{rem}
\label{rem:uniq-expand}
If \((G', \pi', \phi, x, X^*)\)
and \((G', \pi', \phi', x', X^{\prime\,*})\) are
depth-\(r\), expanded cuspidal \(G\)-quintuples
corresponding to the same cuspidal \(G\)-pair \((G', \pi')\),
then, by \cite{yu:supercuspidal}*{Lemma 3.3},
the images of \(x\) and \(x'\) in the reduced building
\(\rBB(\bG, \field)\) are in the same \(G'\)-orbit.
By construction \cite{yu:supercuspidal}*{\S4},
since \(\pi'\) is the twist by \(\phi\) of a representation
of depth strictly less than \(r\),
we have that \(\res\pi'to{G'_{x, r}}\) is \(\phi\)-isotypic,
and similarly for \(\phi'\).
Thus, \(\phi\) and \(\phi'\) agree on
(the \(G'\)-orbit of) \(G'_{x, r}\),
so that
\(X^{\prime\,*} \in X^* + \mf z(\fg')^*_{{(-r)+}}\).
	%% We already know that
	%% \(X^*, X^{\prime\,*} \in \mf z(\fg')^*\).
We `rigidify' \(X^*\) further in
Definition \ref{defn:cent-cusp-pair} below.
\end{rem}

The next definition builds in
\xcite{adler-spice:explicit-chars}*
	{Hypothesis \xref{hyp:X*-central}}
to our definition of a cuspidal pair.

\begin{defn}
\label{defn:cent-cusp-pair}
In the notation of Definition \ref{defn:cusp-ind},
the depth-\(r\), expanded cuspidal \(G\)-quintuple
\((G', \pi', \phi, x, X^*)\) is
\term
	[cuspidal pair!corresponding extended \(G\)-quintuple!central]
	{central}
exactly when \(X^*\) realises \(\phi\) on \(G'_{x, {(r/2)+}}\)
(not just on \(G'_{x, r}\)),
in the sense of \cite{yu:supercuspidal}*{\S5, p.~593};
and a cuspidal \(G\)-pair is
\term[cuspidal pair!central]{central}
if it has a corresponding central, expanded cuspidal
\(G\)-quintuple.
\end{defn}

Note that a toral, cuspidal \(G\)-pair
is automatically compact and central.

\begin{rem}
\label{rem:uniq-cent-expand}
In the notation of Remark \ref{rem:uniq-expand},
if the expanded cuspidal quintuples occurring there are
central, then
\(X^{\prime\,*} \in X^* + \mf z(\fg')^*_{x, -r/2}\).
\end{rem}

We now introduce notation for the basic objects of
harmonic analysis, namely, characters and
(Fourier transforms of) orbital integrals.
We work with characters and orbital integrals
that have been `normalised' by multiplying
by the square root of the (reduced) discriminant
to remove singularities.

\begin{defn}
\label{defn:normal-harm}
Recall that, if \(\pi\) is a smooth irreducible
representation of \(G\)
and \(X^*\) is a semisimple element of \(\fg^*\)
(i.e., if it is fixed by the co-adjoint action of some
maximal torus of \(G\)),
then the distribution character
\[
\anonmapto F{\tr \pi(F)}
\]
on \(G\) and the Fourier transform
(see Definition \ref{defn:FT}) of the orbital integral
\[
\anonmapto f
	{\int_{G/C_G(X^*)\conn} f(\Ad^*(g)X^*)\textup d\dot g}
\]
on \(\fg^*\) are represented, at least on
\(G\rss\) and \(\fg\rss\), respectively,
by scalar functions \(\Theta_\pi\)
(\cite{hc:submersion}*{p.~99, Corollary}
and \cite{adler-debacker:mk-theory}*{Theorem B.1.1})
and \(\hat\mu^G_{X^*}\)
\cite{adler-debacker:mk-theory}*{Theorem A.1.2},
respectively.
The function \(\Theta_\pi\) is canonical,
but normalising the function \(\hat\mu^G_{X^*}\) requires
making a choice of \(G\)-invariant measure on
\(G/C_G(X^*)\conn\).
We use the quotient of Waldspurger's canonical Haar measures
\cite{waldspurger:nilpotent}*{\S I.4} on \(G\) and its
(reductive \cite{borel:linear}*{Proposition 13.19})
subgroup \(C_G(X^*)\conn\).
(These measures assign mass
\(\card{\ol K}\dotm\card{\Lie(\ol K)}^{-1/2}\)
to a parahoric subgroup \(K\) with reductive quotient \ol K;
see \cite{debacker-reeder:depth-zero-sc}*{\S5.1, p.~835}.)
Put
\[
\mterm{\Phi_\pi}
= \smabs{\redD_G}^{1/2}\Theta_\pi
\qandq
\mterm{\hat O^G_{X^*}}
= \smabs{\redD_G(X^*)}^{1/2}\smabs{\redD_G}^{1/2}
	\hat\mu^G_{X^*},
\]
and, if \(X^*\) is strongly regular semisimple,
%(\ie, its stabiliser in the coadjoint action of \(G\) is a
%torus),
then also
\[
\mterm{\smash{\widehat{SO}}^G_{X^*}}
= \sum_
	{X^{\prime\,*} \in G\bslash\bG\dota X^*}
	\hat O^G_{X^{\prime\,*}},
\]
where
\(\bG\dota X^* \ldef \Ad^*(\bG(\unfield))X^* \cap \fg^*\)
is the \bG-stable conjugacy class of \(X^*\)
(see \cite{kottwitz:ratl-conj}*{\S3, p.~788}
and
\cite{debacker-reeder:depth-zero-sc}*{\S2.9, p.~817}),
as functions defined at least on
\(G\rss\), \(\fg\rss\), and \(\fg\rss\), respectively.
\end{defn}

\begin{rem}
\label{rem:extend-Theta}
The functions \(\Theta_\pi\), \(\hat O^G_{X^*}\), and
\(\smash{\widehat{SO}}^G_{X^*}\)
may extend smoothly off the regular semisimple set; for
example, they are globally defined if \(G/Z(G)\) is compact.
Since \(G\rss\) is dense in \(G\), any such
extension is uniquely determined.
Therefore, we do not distinguish notationally
between the functions and their maximal smooth extensions.
\end{rem}

%Now suppose that \(\gamma\) is \(r\)-approximable,
%and put \(\bH = \CC\bG r(\gamma)\).
%Recall \cite{adler-spice:good-expansions}*
%	{Definition \xref{defn:r-approx}}
%(and Lemma \ref{lem:stab-approx})
%that an \(r\)-approximation is a certain sequence
%\((\gamma_i)_{0 \le i < r}\).
%We put \(\gamma_{< r} = \prod_{0 \le i < r} \gamma_i\)
%and \(\gamma_{\ge r} = \gamma_{< r}\inv\gamma\),
%and suppose that \(\gamma_{< r} \in T\).

For the remainder of this section,
fix a compact, central, expanded cuspidal quintuple
\index{notation}{\(\bG'\)}
\index{notation}{\(\pi'\)}
\((G', \pi', \phi, x, X^*)\).
Let \(r > 0\) be the depth of the quintuple.
Fix also an \(r\)-approximable element
\(\mterm\gamma \in G\),
say with \(r\)-approximation
\(\gamma = \mterm{\gamma_{< r}}\mterm{\gamma_{\ge r}}\)
\xcite{adler-spice:good-expansions}*
	{Definition \xref{defn:r-approx}}.
Until \S\ref{sec:index}, put \(\mterm\bH = \CC\bG r(\gamma)\)
\xcite{adler-spice:good-expansions}*
	{Definition \xref{defn:fancy-centralizer-no-underline}}.
(In \S\S\ref{sec:index} and \ref{sec:orbital}, the connected
group \(\CC\bG r(\gamma)\) is enlarged to the possibly
disconnected group \(C_\bG(\gamma_{< r})\),
of which \(\CC\bG r(\gamma)\) is the identity component
\xcite{adler-spice:good-expansions}*
	{Corollary \xref{cor:compare-centralizers}}.)

Let \bT be a maximal \(H\)-torus
(so that \(\gamma_{< r} \in T\)),
and write
\begin{itemize}
\item \(\mterm\Root = \Root(\bG, \bT)\),
\item \(\mterm{\wtilde\Root} = \wtilde\Root(\bG, \bT)\),
\item \(\mterm{\Root_\bH} = \Root(\bH, \bT)\),
and
\item \(\mterm{\wtilde\Root_\bH} = \wtilde\Root(\bH, \bT)\).
\end{itemize}
It is convenient to impose no additional hypotheses on \bT
for Lemma \ref{lem:part-disc}, but we do add some hypotheses
immediately afterwards.

\begin{rem}
\label{rem:compare-centralizers}
By \xcite{adler-spice:good-expansions}*
	{Lemma \xref{lem:compare-centralizers}},
if \(\gamma \in T\), then \(\gamma_{< r} \in T\) as well.
In this case, we have that
\(\ord\bigl(
	\alpha(\gamma) - \alpha(\gamma_{< r})
\bigr) \ge r\)
(hence that
\(\ord_\gamma \alpha = \ord_{\gamma_{< r}} \alpha\))
for \(\alpha \in \wtilde\Root \setminus \wtilde\Root_\bH\),
and that
\(\alpha(\gamma) = \alpha(\gamma_{\ge r})\) for
\(\alpha \in \wtilde\Root_\bH\).
In particular,
\(\wtilde\Root_\bH = \set
	{\alpha \in \smash{\wtilde\Root}}
	{\ord_\gamma \alpha \ge r}\).
\end{rem}

\begin{lem}
\label{lem:part-disc}
If \(\gamma \in T\), then
\(\abs{\redD_G(\gamma)}
= \abs{\redD_G(\gamma_{< r})}\dotm
	\abs{\redD_H(\gamma_{\ge r})}\).
\end{lem}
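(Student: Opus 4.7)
The plan is to derive the identity from the root-product formula for the reduced discriminant in Remark~\ref{rem:disc:roots}, by partitioning $\Root$ according to whether a root lies in $\Root_\bH$ or not, and then using the properties of the $r$-approximation $\gamma = \gamma_{< r}\gamma_{\ge r}$ recorded in Remark~\ref{rem:compare-centralizers} to identify each factor.

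First, since $\gamma \in T$, Remark~\ref{rem:disc:roots} gives
\[
\abs{\redD_G(\gamma)}
= \prod_{\substack{\alpha \in \Root \\ \alpha(\gamma) \ne 1}} q^{-\ord_\gamma \alpha},
\]
and analogous expressions hold for $\abs{\redD_G(\gamma_{< r})}$ and $\abs{\redD_H(\gamma_{\ge r})}$. I would split the product over $\Root$ as $\Root_\bH \sqcup (\Root \setminus \Root_\bH)$ and argue that each piece agrees with one of the two factors on the right-hand side. By Remark~\ref{rem:compare-centralizers}, $\gamma_{< r}$ lies in $T$, and a root $\alpha$ belongs to $\Root_\bH$ iff $\ord_\gamma \alpha \ge r$.

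For $\alpha \in \Root \setminus \Root_\bH$, Remark~\ref{rem:compare-centralizers} already supplies $\ord_\gamma \alpha = \ord_{\gamma_{< r}} \alpha$, which is finite (being strictly less than $r$), so $\alpha(\gamma) \ne 1$ iff $\alpha(\gamma_{< r}) \ne 1$, and both hold. Hence the contribution of these roots to $\abs{\redD_G(\gamma)}$ matches that of the same roots to $\abs{\redD_G(\gamma_{< r})}$. To finish identifying the second factor, I need that roots in $\Root_\bH$ contribute trivially to $\abs{\redD_G(\gamma_{< r})}$; for this I use that $\alpha(\gamma) = \alpha(\gamma_{< r})\alpha(\gamma_{\ge r})$ in $T$, combined with $\alpha(\gamma) = \alpha(\gamma_{\ge r})$ from Remark~\ref{rem:compare-centralizers}, forcing $\alpha(\gamma_{< r}) = 1$.

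For $\alpha \in \Root_\bH$, Remark~\ref{rem:compare-centralizers} gives $\alpha(\gamma) = \alpha(\gamma_{\ge r})$, hence $\ord_\gamma \alpha = \ord_{\gamma_{\ge r}} \alpha$ and $\alpha(\gamma) \ne 1 \iff \alpha(\gamma_{\ge r}) \ne 1$; the contribution of these roots is then exactly $\abs{\redD_H(\gamma_{\ge r})}$, since $\Root_\bH = \Root(\bH, \bT)$ is the root system of $\bH$ with respect to $\bT$. Combining the two pieces yields the desired factorisation. There is no substantive obstacle here: the argument is essentially bookkeeping on roots, and the only nontrivial input is the multiplicativity $\alpha(\gamma) = \alpha(\gamma_{<r})\alpha(\gamma_{\ge r})$ in the torus $T$, used together with the compatibility properties of the $r$-approximation already catalogued in Remark~\ref{rem:compare-centralizers}.
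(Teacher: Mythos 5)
Your proof is correct and takes essentially the same route as the paper: the paper likewise invokes Remark~\ref{rem:disc:roots} to write all three quantities as products over roots and then cites Remark~\ref{rem:compare-centralizers} to match the factors, leaving the bookkeeping you carried out (the partition $\Root = \Root_\bH \sqcup (\Root\setminus\Root_\bH)$ and the observation that $\alpha(\gamma_{<r})=1$ for $\alpha\in\Root_\bH$) implicit.
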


\begin{proof}
By Remark \ref{rem:disc:roots}, we have that
\begin{multline*}
\abs{\redD_G(\gamma)}
= \prod_{\substack
	{\alpha \in \Root \\
	\alpha(\gamma) \ne 1}
}
	q^{-\ord_\gamma \alpha}\text,\quad
\abs{\redD_G(\gamma_{< r})}
= \prod_{\substack
	{\alpha \in \Root \\
	\alpha(\gamma_{< r}) \ne 1}
}
	q^{-\ord_{\gamma_{< r}} \alpha}\text, \\
\andq
\abs{\redD_H(\gamma_{\ge r})}
= \prod_{\substack
	{\alpha \in \Root_\bH \\
	\alpha(\gamma_{\ge r}) \ne 1}
}
	q^{-\ord_{\gamma_{\ge r}} \alpha},
\end{multline*}
so that the desired equality follows from
Remark \ref{rem:compare-centralizers}.
\end{proof}

For the remainder of this section, suppose that
\bT is \tamefield-split and contained in \(\bG'\),
and that \(x \in \BB(\bT, \field)\).
Then \(\gamma_{< r} \in T \subseteq G'\)
\xcite{adler-spice:good-expansions}*
	{Remark \xref{rem:approx-facts-in-center}},
so that it makes sense to define
\(\CC{\bG'}r(\gamma_{< r})\).
By abuse of notation, we write
\mterm{\CC{\bG'}r(\gamma)}, or just \mterm{\bH'},
in place of \(\CC{\bG'}r(\gamma_{< r})\).
Put \(\mterm{\Root'} = \Root(\bG', \bT)\)
and
\(\mterm{\Root_{\bH'}} = \Root(\bH', \bT)
	= \Root_\bH \cap \Root'\).

\begin{rem}
\label{rem:funny-centralizer-descends}
By \xcite{adler-spice:good-expansions}*
	{Lemma \xref{lem:funny-centralizer-descends}},
we have that \(\bH' = \bH \cap \bG'\).
\end{rem}

In order to minimise notation, it helps to have a
notation for the Fourier transforms of orbital integrals
occurring in character formul{\ae} that does not require us
to refer directly to the element \(X^*\).

\begin{defn}
\label{defn:O-pi}
Write
\(\hat O^H_{\pi'}\)
	and (if \(X^*\) is strongly regular semisimple,
that is, if \(\bG'\) is a torus)
\(\smash{\widehat{SO}}^H_{\pi'}\)
for the restrictions to \(\fh\rss \cap \fh_r\) of
\(\hat O^H_{X^*}\)
	and \(\smash{\widehat{SO}}^H_{X^*}\),
respectively.
\end{defn}

Since \(\res\pi'to{G_{x, r}}\) is
\(\res\phi to{G_{x, r}}\)-isotypic,
we have by \xcite{adler-spice:explicit-chars}*
	{Lemma \xref{lem:orbital-cancel}}
that \(\hat O^H_{\pi'}\) and \(\smash{\widehat{SO}}^H_{\pi'}\)
depend only on \(\pi'\)
(indeed, only on \(\res\phi to{G_{x, r}}\)),
not on the choice of expanded quintuple.

\subsection{Indices}
\label{sec:indices}

The character formul{\ae} of
\cite{adler-spice:explicit-chars} involve
groups \dc and \(\odc{\gamma_{< r}; x, {r+}}\).
These are defined in
\xcite{adler-spice:good-expansions}*
	{Definition \xref{defn:bracket}},
but, since we are interested only in the cardinalities of
certain quotients involving these groups, we do not
reproduce their full definitions here
(though see the proof of Proposition \ref{prop:const}
below).
Since the definitions of these groups depend only on
\(\gamma_{< r}\), we ease notation by assuming
throughout this section that \(\gamma = \gamma_{< r}\).

\begin{prop}
\label{prop:const}
\begin{multline*}
\bigindx\dc{T_{0+}G_{x, r/2}}\dotm
\bigindx\dcp{T_{0+}G_{x, {(r/2)+}}} \\
= \card{\ft_{0:{0+}}}\dotm\card{\fh_{x, 0:{0+}}}\inv\dotm
\prod_{\alpha \in \Root_\bH}
	q^r
\dotm
\smabs{\redD_G(\gamma)}\inv.
\end{multline*}
\end{prop}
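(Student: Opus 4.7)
The plan is to transfer the group-theoretic index calculation to the Lie algebra via the mock-exponential of Hypothesis~\ref{hyp:stronger-mock-exp}, and then apply the cardinality formula of Lemma~\ref{lem:gxf-card}.

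First, I would unpack the definitions of \(\dc\) and \(\dcp\) from \cite{adler-spice:good-expansions}*{Definition \xref{defn:bracket}}. In Iwahori-decomposition-style, each is built from \(T_{0+}\) together with root subgroups \((U_\alpha)_{x, f(\alpha)}\), where the depth function \(f\) takes the uniform value \(r/2\) (respectively \({(r/2)+}\)) on \(\Root_\bH\) and a strictly larger, \(\gamma\)-controlled value on \(\Root \setminus \Root_\bH\). The ``denominator'' groups \(T_{0+}G_{x, r/2}\) and \(T_{0+}G_{x, {(r/2)+}}\) correspond analogously to the uniform depth function \(g(\alpha) = r/2\) (respectively \({(r/2)+}\)) on all of \(\Root\). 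Both functions agree at \(\alpha = 0\) (with value \(0+\)) and are \(\pm\Gamma\)-invariant.

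Second, applying \(\mexp_{\bT, \ol x}\) on each root subspace (the failure of \(\mexp\) to respect the group law is irrelevant for cardinality comparisons), the four subgroups correspond to \(\lsub\bT\fg_{x, f}\), \(\lsub\bT\fg_{x, g}\), \(\lsub\bT\fg_{x, {f+}}\), \(\lsub\bT\fg_{x, {g+}}\). Thus
\[
\bigindx\dc{T_{0+}G_{x, r/2}}\dotm
\bigindx\dcp{T_{0+}G_{x, {(r/2)+}}}
= \card{\lsub\bT\fg_{x, f:g}}\dotm\card{\lsub\bT\fg_{x, {f+}:{g+}}}.
\]
The \(\pm\Gamma\)-invariant form of Lemma~\ref{lem:gxf-card} (Remark~\ref{rem:gxf-card}) then evaluates the right-hand side as
\[
\card{\ft_{0:{0+}}}\dotm\card{\fh_{x, 0:{0+}}}\inv
\dotm\prod_{\alpha \in \langle\pm1\rangle\bslash\Root}
	q^{2(g(\alpha) - f(\alpha))}.
\]
The \(\fh_{x, 0:{0+}}\inv\) factor arises because the ``\(\alpha = 0\)'' contribution to the \(\dcp\)-side denominator picks up not just \(\ft_{0+}\) but the full parahoric of \(H\).

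Third, to recover the stated form, I would split the final product along \(\Root_\bH\) versus \(\Root \setminus \Root_\bH\). For \(\alpha \in \Root_\bH\) we have \(f(\alpha) = r/2 = g(\alpha)\), and the telescoping between \(f\) and \({f+}\) versus \(g\) and \({g+}\) produces a clean \(q^r\) factor per root, accounting for \(\prod_{\alpha \in \Root_\bH} q^r\). For \(\alpha \in \Root \setminus \Root_\bH\), the depth jumps \(f(\alpha) - g(\alpha)\) are exactly \(\ord_\gamma\alpha - r/2\) (by the definition of \(\dc\), which is engineered to absorb the \(\ad(\gamma) - 1\) isomorphism on these root subspaces), so their combined contribution is \(\prod_{\alpha \in \Root \setminus \Root_\bH} q^{-\ord_\gamma\alpha} = \smabs{\redD_G(\gamma)}\inv\) by Remark~\ref{rem:disc:roots}, using that \(\alpha(\gamma) \ne 1\) for precisely these roots (Remark~\ref{rem:compare-centralizers}).

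The main obstacle is the bookkeeping in the second paragraph: unwinding the Iwahori-style definition of \(\dc\) to pin down \(f(\alpha)\) on \(\Root \setminus \Root_\bH\), and checking that the \(\pm\Gamma\)-orbit grouping in Remark~\ref{rem:gxf-card} reassembles correctly into the per-root product in the statement. Once the functions \(f\) and \(g\) are correctly identified, Lemma~\ref{lem:gxf-card} does the combinatorial work and Remark~\ref{rem:disc:roots} supplies the discriminant identification essentially for free.
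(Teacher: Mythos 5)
Your high-level strategy matches the paper's: transfer the group-theoretic indices to cardinalities of Lie-algebra quotients, then apply Lemma~\ref{lem:gxf-card} (via Remark~\ref{rem:gxf-card}) and identify the product with the discriminant via Remark~\ref{rem:disc:roots}. But the depth functions you wrote down are wrong, and as a consequence the intermediate calculations would not produce the stated answer if carried out literally.

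First, \(\dc\) \emph{contains} \(T_{0+}G_{x, r/2}\), so its depth function must be pointwise \emph{at most} \(r/2\). Unpacking \cite{adler-spice:good-expansions}*{Definition~\xref{defn:bracket}}, \(\dc = \lsub\bT G_{x, {0+}\vee(r-\ord_\gamma)/2}\): the depth on \(\wtilde\Root_\bH\) is \({0+}\) (not \(r/2\)), and on \(\Root\setminus\Root_\bH\) it is \((r-\ord_\gamma\alpha)/2\), which is strictly \emph{smaller} than \(r/2\) (since \(\ord_\gamma\alpha > 0\) there), not ``strictly larger.'' With your functions, \(f(\alpha) = g(\alpha) = r/2\) on \(\Root_\bH\) would yield \(q^{2(g-f)} = q^0 = 1\), not \(q^r\); the correct contribution is \(q^{2(r/2 - 0)} = q^r\). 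Similarly, on \(\Root\setminus\Root_\bH\) the Lemma gives \(q^{2[r/2 - (r-\ord_\gamma\alpha)/2]} = q^{\ord_\gamma\alpha}\), whose product over those roots is \(\smabs{\redD_G(\gamma)}\inv\) (note: \(q^{+\ord_\gamma\alpha}\), not \(q^{-\ord_\gamma\alpha}\) as you wrote; the latter would give \(\smabs{\redD_G(\gamma)}\), the wrong sign).

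Second, the factor \(\card{\ft_{0:{0+}}}\dotm\card{\fh_{x,0:{0+}}}\inv\) does \emph{not} come out of the ``\(\alpha = 0\)'' term in Lemma~\ref{lem:gxf-card} — that term would only yield \(\card{\ft_{f(0):g(0)}}\card{\ft_{{f(0)+}:{g(0)+}}}\), which is trivial when \(f(0) = g(0)\). The issue is that the depth function actually defining \(\dc\) takes the \(\tR\)-value \({0+}\) on \(\wtilde\Root_\bH\), while Lemma~\ref{lem:gxf-card} requires real-valued functions. One must separately peel off the discrepancy between the depth-\(0\) and depth-\({0+}\) versions on \(\wtilde\Root_\bH\); that contributes exactly \(\card{\ft_{0:{0+}}}\dotm\card{\fh_{x,0:{0+}}}\inv\) (the paper's step \((*)\)). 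Your explanation that ``the \(\dcp\)-side denominator picks up the full parahoric of \(H\)'' doesn't describe this correctly.

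Finally, a smaller point: the transfer from group indices to Lie-algebra cardinalities is done in the paper via a filtration whose successive quotients are compared by Moy--Prasad isomorphisms (as in \cite{adler-spice:good-expansions}*{Corollary~\xref{cor:gen-iwahori-factorization-bij}}); it does not invoke \(\mexp\). Hypothesis~\ref{hyp:stronger-mock-exp} is deliberately not imposed until Theorem~\ref{thm:ratl}, so using \(\mexp\) here would impose an unnecessary hypothesis on this Proposition.
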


\begin{proof}
In the notation of
\xcite{adler-spice:good-expansions}*
	{Definition \xref{defn:vGvr}},
write
\(\lsub\bT G_{x, {0+} \vee (r - \ord_\gamma)/2}\)
and \(\lsub\bT\fg_{x, {0+} \vee (r - \ord_\gamma)/2}\)
as abbreviations for \(\lsub\bT G_{x, f_\bH}\)
and \(\lsub\bT\fg_{x, f_\bH}\),
where
\[
f_\bH(\alpha) = \begin{cases}
{0+}, &
	\alpha \in \wtilde\Root_\bH \\
\tfrac1 2(r - \ord_\gamma \alpha), &
	\alpha \in \wtilde\Root \setminus \wtilde\Root_\bH;
\end{cases}
\]
and, similarly,
\(\lsub\bT G_{x, (0 \vee {(r - \ord_\gamma)/2)+}}\)
and \(\lsub\bT\fg_{x, (0 \vee {(r - \ord_\gamma)/2)+}}\)
instead of \(\lsub\bT G_{x, {f+}}\)
and \(\lsub\bT\fg_{x, {f+}}\),
where \(f+\) is defined by
\[
({f+})(\alpha) = {f_\bH(\alpha)+} = \begin{cases}
{0+}, &
	\alpha \in \wtilde\Root_\bH \\
{\tfrac1 2(r - \ord_\gamma \alpha)+}, &
	\alpha \in \wtilde\Root \setminus \wtilde\Root_\bH.
\end{cases}
\]
Note that \(f_\bH\) (hence also \(f+\))
is everywhere strictly positive%,
%so \(\lsub\bT G_{x, f_\bH}\) and \(\lsub\bT G_{x, {f+}}\)
%are both contained in \(G_{x, {0+}}\)
.
We are also interested in the merely
non-negative function \(f\) defined by
\[
f(\alpha) = \begin{cases}
0, & \alpha \in \wtilde\Root_\bH \\
\tfrac1 2(r - \ord_\gamma \alpha), &
	\alpha \in \wtilde\Root \setminus \wtilde\Root_\bH,
\end{cases}
\]
and, as above, we write
\(\lsub\bT\fg_{x, 0 \vee (r - \ord_\gamma)/2}\)
instead of
\(\lsub\bT\fg_{x, f}\).
(The corresponding group is not of interest to us.)
%We have
%\(\fh_{x, 0} + \fg_{x, r/2}
%\subseteq \lsub\bT\fg_{x, 0 \vee (r - \ord_\gamma)/2}
%\subseteq \fg_{x, 0}\).
%By \xcite{adler-spice:good-expansions}*
%	{Proposition \xref{prop:heres-a-gp}},
%\(\lsub\bT G_{x, {0+} \vee (r - \ord_\gamma)/2}\)
%	contains \(H_{x, {0+}}G_{x, r/2}\),
%and \(\lsub\bT G_{x, (0 \vee {(r - \ord_\gamma)/2)+}}\)
%	contains \(H_{x, {0+}}G_{x, {(r/2)+}}\).
	%% This is sort-of noted below.

By \xcite{adler-spice:good-expansions}*
	{Definition \xref{defn:bracket}},
\[
\dc = \lsub\bT G_{x, {0+} \vee (r - \ord_\gamma)/2}
\qandq
\dcp = \lsub\bT G_{x, 0 \vee {(r - \ord_\gamma)/2)+}}.
\]
In the notation of
\xcite{adler-spice:good-expansions}*
	{Definition \xref{defn:vGvr}},
we have by
\xcite{adler-spice:good-expansions}*
	{Proposition \xref{prop:heres-a-gp}}
that
\(T_{0+}G_{x, r/2} = (T, G)_{x, ({0+}, r/2)}\)
and \(T_{0+}G_{x, {(r/2)+}} = (T, G)_{x, ({0+}, {(r/2)+})}\);
the corresponding Lie-algebra statements follow immediately
from Definition \ref{defn:vgvr}.
Thus, we are almost in a position to use
Lemma \ref{lem:gxf-card}, except for two obstacles.
First, we are working on the group, not on the Lie algebra;
and, second, the function \(f_\bH\) is not real valued.

The solution proceeds in two stages:
we first pass to the Lie algebra,
then `manually' account for the difference between \(f_\bH\)
and the real-valued function \(f\).
Although there need not be any reasonably behaved map from
\(\lsub\bT G_{x, ({0+} \vee (r - \ord_\gamma)/2)}
	/T_{0+}G_{x, r/2}\)
to
\(\lsub\bT\fg_{x, ({0+} \vee (r - \ord_\gamma)/2)}
	/(\ft_{{0+}} + \fg_{x, r/2})\),
an argument as in \xcite{adler-spice:good-expansions}*
	{Corollary \xref{cor:gen-iwahori-factorization-bij}}
shows that there are filtrations
\((\mc G_i)_{i = 0}^n\) and \((\Lie(\mc G_i))_{i = 0}^n\)
of \(\lsub\bT G_{x, {0+} \vee (r - \ord_\gamma)/2}\)
and \(\lsub\bT\fg_{x, {0+} \vee (r - \ord_\gamma)/2}\),
respectively,
such that
	\begin{itemize}
	\item
\(\mc G_i/\mc G_{i + 1}
\cong \Lie(\mc G_i)/\Lie(\mc G_{i + 1})\)
for all \(i \in \sset{0, \dotsc, n - 1}\)
(\via a Moy--Prasad isomorphism;
see \cite{moy-prasad:k-types}*{\S3.7}
and \cite{adler:thesis}*{\S1.5, p.~12}),
	\item
\(\mc G_0 = \lsub\bT G_{x, {0+} \vee (r - \ord_\gamma)/2}\)
and
\(\Lie(\mc G_0) = \lsub\bT\fg_{x, {0+} \vee (r - \ord_\gamma)/2}\),
	and
	\item
\(\mc G_n = T_{0+}G_{x, r/2}\)
and
\(\Lie(\mc G_n) = \ft_{0+} + \fg_{x, r/2}\).
	\end{itemize}
Thus,
\[
\bigindx\dc{T_{0+}G_{x, r/2}}
= \bigindx
	{\lsub\bT\fg_{x, ({0+} \vee (r - \ord_\gamma)/2)}}
	{\ft_{0+} + \fg_{x, r/2}}
\]
and, similarly,
\[
\bigindx\dcp{T_{0+}G_{x, {(r/2)+}}}
= \bigindx
	{\lsub\bT\fg_{x, (0 \vee {(r - \ord_\gamma)/2)+}}}
	{\ft_{0+} + \fg_{x, {(r/2)+}}},
\]
so that we may re-write the left-hand side of the equality in
the statement as
\begin{multline*}
\underbrace
	{\indx{\ft_0 + \fg_{x, r/2}}{\ft_{0+} + \fg_{x, r/2}}\dotm
	\card{\lsub\bT\fg_
		{x, (0 \vee (r - \ord_\gamma)/2):%
			({0+} \vee (r - \ord_\gamma)/2)}
	}\inv}_{(*)}\times{} \\
\underbrace{
	\bigindx
		{\lsub\bT\fg_{x, (0 \vee (r - \ord_\gamma)/2)}}
		{\ft_0 + \fg_{x, r/2}}
	\dotm\bigindx
		{\lsub\bT\fg_{x, (0 \vee {(r - \ord_\gamma)/2)+}}}
		{\ft_{0+} + \fg_{x, {(r/2)+}}}
}_{(**)}.
\end{multline*}
Since \(\tfrac1 2(r - \ord_\gamma \alpha) \le 0\)
exactly when \(\ord_\gamma \alpha \ge r\),
\ie, exactly when \(\alpha \in \wtilde\Root_\bH\)
(see Remark \ref{rem:compare-centralizers}),
we have
\[
(*) = \card{\ft_{0:{0+}}}\dotm\card{\fh_{x, 0:{0+}}}\inv.
\]
By Lemma \ref{lem:gxf-card} (and Remark \ref{rem:gxf-card})
and
\xcite{adler-spice:good-expansions}*
	{Corollary \xref{cor:compare-centralizers}}, we have
\begin{align*}
(**) ={} &
\prod_{\alpha \in \Root_\bH} q^{2(r/2 - 0)}
\dotm
\prod_{\alpha \in \Root \setminus \Root_\bH}
	q^{2[r/2 - (r - \ord_\gamma \alpha)/2]} \\
={} &
\prod_{\alpha \in \Root_\bH} q^r
\dotm
\prod_{\alpha \in \Root \setminus \Root_\bH}
	q^{\ord_\gamma \alpha} \\
={} &
\prod_{\alpha \in \Root_\bH} q^r
\dotm
\abs{\redD_G(\gamma)}.\qedhere
\end{align*}
\end{proof}

\begin{cor}
\label{cor:const}
\begin{multline*}
\bigindx{\odc{\gamma; x, r}}{
	\odc{\gamma; x, r}_{G'}G_{x, r/2}
}\dotm\bigindx{\odc{\gamma; x, {r+}}}{
	\odc{\gamma; x, {r+}}_{G'}G_{x, {(r/2)+}}
} \\
= \card{\fh_{x, 0:{0+}}}\inv\card{\fh'_{x, 0:{0+}}}\dotm
\abs{\redD_H(X^*)}\dotm
\abs{\redD_G(\gamma)}\inv\abs{\redD_{G'}(\gamma)}.
\end{multline*}
\end{cor}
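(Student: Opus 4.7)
My plan is to obtain the corollary by taking the ratio of two applications of Proposition \ref{prop:const}: one to \(G\) itself, and one to \(G'\) (with \(\bH'\) playing the role of \(\bH\)). Under the section's standing hypothesis \(\gamma = \gamma_{<r}\), both applications are legitimate, since \(\gamma \in T \subseteq G'\), \(x \in \BB(\bT, \field)\) with \(\bT \subseteq \bG'\), and by Remark \ref{rem:funny-centralizer-descends} the group \(\bH' = \bH \cap \bG'\) is the analogue for \(G'\) of \(\bH\) for \(G\).

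The first step is the algebraic manipulation
\[
\bigindx{\odc{\gamma; x, r}}{\odc{\gamma; x, r}_{G'}G_{x, r/2}}
= \frac{
    \bigindx{\odc{\gamma; x, r}}{T_{0+}G_{x, r/2}}
}{
    \bigindx{\odc{\gamma; x, r}_{G'}G_{x, r/2}}{T_{0+}G_{x, r/2}}
},
\]
together with
\[
\bigindx{\odc{\gamma; x, r}_{G'}G_{x, r/2}}{T_{0+}G_{x, r/2}}
= \bigindx{\odc{\gamma; x, r}_{G'}}{\odc{\gamma; x, r}_{G'} \cap T_{0+}G_{x, r/2}}.
\]
The compatibility I would then need is
\(\odc{\gamma; x, r}_{G'} \cap T_{0+}G_{x, r/2} = T_{0+}G'_{x, r/2}\);
this follows from \(T_{0+}G_{x, r/2} \cap G' = T_{0+}G'_{x, r/2}\) (an Iwahori-factorisation consequence, using that \(\bG'_\tamefield\) is a Levi of \(\bG_\tamefield\) so \(G'_{x, s} = G_{x, s} \cap G'\)) together with \(T_{0+}G'_{x, r/2} \subseteq \odc{\gamma; x, r}_{G'}\) (the \(G'\)-analogue of the containment used implicitly in Proposition \ref{prop:const}). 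The parallel argument handles the \({(r/2)+}\) case. After applying Proposition \ref{prop:const} to both \(G\) and \(G'\) and taking the ratio, the factor \(\card{\ft_{0:{0+}}}\) cancels, and what remains is
\[
\card{\fh_{x, 0:{0+}}}\inv\card{\fh'_{x, 0:{0+}}}
\dotm\prod_{\alpha \in \Root_\bH \setminus \Root_{\bH'}} q^r
\dotm\abs{\redD_G(\gamma)}\inv\abs{\redD_{G'}(\gamma)}.
\]

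The final step is to recognise the root product as \(\abs{\redD_H(X^*)}\). Since the expanded quintuple is compact and central, \(X^*\) is \((\bG', \bG)\)-generic of depth \(-r\); in particular, \(X^* \in \mf z(\fg')^*\), so \(\textup d\alpha^\vee(X^*) = 0\) for \(\alpha \in \Root'\), and \(\ord_{X^*}\alpha = -r\) for \(\alpha \in \Root \setminus \Root'\). Because \(\Root_{\bH'} = \Root_\bH \cap \Root'\), the formula for the reduced discriminant in Remark \ref{rem:disc:roots} gives
\(\abs{\redD_H(X^*)} = \prod_{\alpha \in \Root_\bH \setminus \Root_{\bH'}} q^{-\ord_{X^*}\alpha} = \prod_{\alpha \in \Root_\bH \setminus \Root_{\bH'}} q^{r}\),
exactly the factor needed.

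The main obstacle will be the factorisation \(\odc{\gamma; x, r}_{G'} \cap T_{0+}G_{x, r/2} = T_{0+}G'_{x, r/2}\) (and its \(+\)-analogue): these mix the mixed-depth subgroup \(\odc{\gamma; x, r}\) with a standard Moy--Prasad subgroup, so verifying them requires careful use of the Iwahori-type factorisations of \cite{adler-spice:good-expansions}. Once these are granted, the rest is algebraic bookkeeping followed by the genericity-based identification of the residual root product.
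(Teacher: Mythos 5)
Your proposal matches the paper's proof almost exactly: both factor the left-hand side via the intersection formula \(\odc{\gamma; x, d}_{G'} \cap T_{0+}G_{x, d/2} = T_{0+}G'_{x, d/2}\) (which the paper cites from Proposition \xref{prop:heres-a-gp} and Lemma \xref{lem:more-vGvr-facts} of \cite{adler-spice:good-expansions}), then divide two applications of Proposition \ref{prop:const}, and finally identify the residual factor \(\prod_{\alpha \in \Root_\bH \setminus \Root_{\bH'}} q^r\) with \(\abs{\redD_H(X^*)}\) using \(\bH' = \bH \cap \bG' = C_\bH(X^*)\) and the genericity condition \textbf{GE1}. The only difference is one of emphasis: you flag the intersection formula as the main technical obstacle and sketch how one would prove it, whereas the paper treats it as already available from the cited reference.
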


\begin{proof}
By
%\xcite{adler-spice:good-expansions}*
%	{Remark \xref{rem:bracket-facts-containment}},
\xcite{adler-spice:good-expansions}*
	{Proposition \xref{prop:heres-a-gp}
and Lemma \xref{lem:more-vGvr-facts}},
we have that
\(\odc{\gamma; x, d}_{G'} \cap T_{0+}G_{x, d/2}
= T_{0+}G'_{x, d/2}\)
for \(d \in \sset{r, {r+}}\); so the left-hand side of the
equality in the statement is
\begin{multline*}
\bigl(
	\bigindx{\odc{\gamma; x, r}}{T_{0+}G_{x, r/2}}\dotm
	\bigindx{\odc{\gamma; x, {r+}}}{T_{0+}G_{x, {(r/2)+}}}
\bigr)\times{} \\
\bigl(
	\bigindx{\odc{\gamma; x, r}_{G'}}{T_{0+}G'_{x, r/2}}\dotm
	\bigindx{\odc{\gamma; x, {r+}}_{G'}}{T_{0+}G'_{x, {(r/2)+}}}
\bigr)\inv.
\end{multline*}
By Proposition \ref{prop:const}, this equals
\[
\card{\fh_{x, 0:{0+}}}\inv\card{\fh'_{x, 0:{0+}}}\dotm
\underbrace{
	\prod_{\alpha \in \Root_\bH \setminus \Root_{\bH'}}
		q^r
}_{(*)}\dotm
\smabs{\redD_G(\gamma)}\inv
\smabs{\redD_{G'}(\gamma)}.
\]
%The quotient of \(\fh_{x, 0:{0+}}\) by the image
%of \(\fh'_{x, 0:{0+}}\) is isomorphic to
%the quotient of
%\(\fh_{x, 0} = (\fh', \fh)_{x, (0, 0)}\)
%by
%\(\fh'_{x, 0} + \fh_{x, {0+}} = (\fh', \fh)_{x, (0, {0+})}\).
	%% True, but we don't seem to use it.
By Remark \ref{rem:funny-centralizer-descends},
we have that
\(\bH' = \bH \cap \bG' = C_\bH(X^*)\).
Since \(\Root_{\bH'} = \Root_\bH \cap \Root'\),
we have by \cite{yu:supercuspidal}*{\S8, \textbf{GE1}}
that \(\ord_{X^*} \alpha = -r\)
 for the roots \(\alpha\) in the product ($*$); so
\[
(*)
= \prod_{\alpha \in \Root_\bH \setminus \Root_{\bH'}}
	q^{-\ord_{X^*} \alpha}
%= \abs{D_{H/H'}(X^*)}
= \abs{\redD_H(X^*)}.\qedhere
\]
\end{proof}

\subsection{Roots of unity}
\label{sec:root}

The character formul{\ae} of
\cite{adler-spice:explicit-chars} involve \(4\)th roots of
unity defined in terms of Galois actions on root systems.
Definition \ref{defn:signs} below breaks down these roots in
a slightly different way from what is done in
\cite{adler-spice:explicit-chars};
in Proposition \ref{prop:root}, we show that the two
approaches give the same result.

Except for the symbols whose definitions make explicit
reference to the group \(\bH = \CC\bG r(\gamma)\),
the definitions below do not depend on the approximability
of \(\gamma\).

\begin{defn}
\label{defn:signs}
Write
\begin{itemize}
\item \(\mterm{\Root_{x, r/2}}
= \set{\alpha \in \Root}{r/2 \in \ord_x(\alpha)}\),
\item \(\mterm{\Root_{x, (r - \ord_\gamma)/2}}
= \sett{\alpha \in \Root}
	{\(\alpha(\gamma) \ne 1\)
	and \((r - \ord_\gamma \alpha)/2 \in \ord_x(\alpha)\)}\),
and
\item \(\mterm{\Root(\pi', \gamma)}
= \Root_{x, (r - \ord_\gamma)/2} \setminus \Root'\).
\end{itemize}
As in \xcite{adler-spice:explicit-chars}*
	{Proposition \xref{prop:gauss-sum}},
write also
	\begin{itemize}
	\item \(\mf G = q^{-1/2}\sum_{t \in \ff} \AddChar(t^2)\);
	\item
\(t_\alpha
= \tfrac1 2 e_\alpha\dotm
\Norm_{\field_\alpha/\field_{\pm\alpha}}(w_\alpha)\dotm
\textup d\alpha^\vee(X^*)\dotm
(\alpha(\gamma) - 1)\),
where
\(w_\alpha\) is an element of \(\field_\alpha\)
of valuation
\(\tfrac1 2(r - \ord_\gamma \alpha)\)
such that \(w_\alpha^2 \in \field_{\pm\alpha}\),
for \(\alpha \in \Root(\pi', \gamma)\);
and
	\item \(\bG_{\pm\alpha}\) for the
\(\field_{\pm\alpha}\)-group generated by the
(\(\pm\alpha\))-root subgroups of \bG.
	\end{itemize}
%% Note that
%% \(\varepsilon\symmunram(G, T)\)
%% and
%% \(\varepsilon\nosymm_{x, r/2}(G, T)\)
%% are characters.  We wind up interested in
%% \(
%%	(\varepsilon(G, T)/\varepsilon(G', T))
%%	(\varepsilon(H, T)/\varepsilon(H', T))\inv
%% \).  The \((G, G')\) part should go with the character,
%% giving a `signed character', whereas the \((H, H')\) part
%% should go with the orbital integral, giving a
%% `signed orbital integral'.
%% It's reasonably easy to split up \(\tilde e(G, T)\)
%% into a quotient of a ``\(G\) part''
%% and a ``\(G'\) part'', but not so clear where the
%% \((H, H')\) part should come from.
%% It's not even clear for \(\varepsilon\symmram\)
%% how to get the \(G\) and \(G'\) parts.

Now we introduce a number of new pieces of notation, each of
which builds on the others.
If \(A\) is a finite, Abelian group of even
order, then denote by \mterm{\sgn_A} the unique
non-trivial homomorphism from \(A\) to \(\sset{\pm1}\).
First put
{\def\prodder{%
\prod_
	{\alpha
		\in \Gamma\bslash\Root(\pi', \gamma)\symmram}%
}
\begin{align*}
\mterm{\tilde e(G, T, \gamma)}
={} & (-1)^{\smcard{\Gamma\bslash\Root_{x, (r - \ord_\gamma)/2}}}, \\
\mterm{\varepsilon\symmunram(G, T, \gamma)}
={} & \prod_{\alpha
		\in \Gamma\bslash\Root\textsub{symm, unram, \(x, r/2\)}}
	\sgn_{\ff_\alpha^1}(\alpha(\gamma)), \\
\mterm{\varepsilon\nosymm_{x, r/2}(G, T, \gamma)}
={} & \prod_{\alpha \in {\pm\Gamma}\bslash\Root\nosymm_{x, r/2}}
	\sgn_{\ff_\alpha}(\alpha(\gamma)), \\
\intertext{and}
\mterm{\varepsilon\noram_{x, r/2}(G, T, \gamma)}
={} & \varepsilon\nosymm_{x, r/2}(G, T, \gamma)
	\varepsilon\symmunram(G, T, \gamma). \\
\intertext{Next put}
\mterm{\tilde e(G/G', \gamma)}
={} & \frac
	{\tilde e(G, T, \gamma)}
	{\tilde e(G', T, \gamma)}, \\
\intertext{and similarly for
{\def\amterm#1{%
	\index{notation}{\(#1(G/G', \gamma)\)}%
	\me{#1}%
}
\amterm{\varepsilon\symmunram},
\amterm{\varepsilon\nosymm_{x, r/2}},
and \amterm{\varepsilon\noram_{x, r/2}}};
and then}
\mterm{e(G/G', \gamma)}
={} & \prod_{\substack{
	\alpha \in \Root \setminus \Root' \\
	\alpha(\gamma) \ne 1
}}
	(-1)^{e_\alpha(r - \ord_\gamma \alpha)} \\
\intertext{and}
\mterm{\varepsilon\symmram(G/G', \gamma)}
={} & \prodder
	(-1)^{\rk_{\field_{\pm\alpha}} \bG_{\pm\alpha} - 1}
		%% This is just the Kottwitz sign of
		%% \(\bG_{\pm\alpha}\), regarded as a
		%% \(\field_{\pm\alpha}\)-group.  That may be
		%% helpful later in computations.
	(-\mf G)^{f_\alpha} \times{} \\
    & \hphantom\prodder\qquad
	\sgn_{\ff_\alpha}(t_\alpha)
	\sgn_{\field_{\pm\alpha}}(\bG_{\pm\alpha}). \\
	%% The necessity of defining \(t_\alpha\) is what
	%% forces us to exclude roots in \(\bG'\) here.
\intertext{Finally, put}
\mterm{\tilde e(\pi', \gamma)}
={} & \frac
	{\tilde e(G/G', \gamma)}
	{\tilde e(H/H', \gamma)},
\end{align*}}
and similarly for
{\def\amterm#1{%
	\index{notation}{\(#1(\pi', \gamma)\)}%
	\me{#1}%
}
\amterm{\varepsilon\symmunram},
\amterm{\varepsilon\nosymm},
\amterm{\varepsilon\noram},
\amterm{\varepsilon\symmram},
and
\amterm e}.
\end{defn}

The signs occurring in Definition \ref{defn:signs} play an
important r\^ole in the character computations, and their
behaviour is surprisingly subtle (for example,
it necessitates the modification of the original
Yu construction described in
Definition \ref{defn:twisted-cusp-ind} below);
so we take a while to discuss some of their important
properties.

\begin{rem}
The riot of notation in Definition \ref{defn:signs} serves a
purpose; each term is handled differently.
The function
\mapto
	{\varepsilon\noram(\pi')}
	\gamma
	{\varepsilon\noram(\pi', \gamma)}
is a character of \(T\).
We build it into the `twisted' construction
(Definition \ref{defn:twisted-cusp-ind})
of supercuspidals from toral pairs.
The function \(\tilde e(\pi')\)
is the source of the sign changes predicted by Kottwitz
\cite{kottwitz:sign-change};
see Proposition \ref{prop:stable-sign}.
The function \(\varepsilon\symmram(\pi')\)
is a bit more mysterious;
but, by Lemma \ref{lem:stable-sign}, it does not change
under stable conjugacy, and so, for our purposes, this
mystery may be swept under the rug.
\end{rem}

\begin{rem}
If \bT is an elliptic \(G\)-torus and \(\alpha \in \Root\),
then
\(\sum_{\sigma \in \Gamma/\Gamma_\alpha} \alpha\)
is a (rational) character of \(\bT/Z(\bG)\),
hence is trivial;
so
\begin{align*}
\sgn_{\ff_\alpha}(\alpha(\gamma))^{e_\alpha}
& {}= \sgn_\ff\bigl(
	N_{\ff_\alpha/\ff}(\alpha(\gamma))
\bigr)^{e_\alpha} \\
& {}= \sgn_\ff\bigl(
	N_{\field_\alpha/\field}(\alpha(\gamma))
\bigr) \\
& {}= \sgn_\ff\Bigl(
	\bigl(\sum_{\sigma \in \Gamma/\Gamma_\alpha} \alpha\bigr)
	(\gamma)
\Bigr) = 1
\qforall{\(\gamma \in T\).}
\end{align*}
In particular, if \(e_\alpha\) is odd, then
\(\sgn_{\ff_\alpha} \circ \alpha = 1\);
so, if \bT splits over an extension of
odd ramification degree, then the character
\mapto
	{\varepsilon\nosymm_{x, r/2}(G, T)}
	\gamma
	{\varepsilon\nosymm_{x, r/2}(G, T, \gamma)}
of Definition \ref{defn:signs} is trivial.
\end{rem}

\begin{rem}
\label{rem:sign-choices}
The quantities \(\tilde e(G, T, \gamma)\),
\(\varepsilon\symmunram(G, T, \gamma)\),
and \(\varepsilon\symmram(G/G', \gamma)\)
all depend on \(r\), although we suppress it from
the notation for convenience.

By Corollary \ref{cor:opp-ord}, we have that
\(
%(-1)^{\smcard{\Gamma\bslash\Root_{x, (r - \ord_\gamma)/2}}}
\tilde e(G, T, \gamma)
= (-1)^
	{\smcard{\Gamma\bslash\Root
		\textsub{\(x, (r - \ord_\gamma)/2\), symm}
	}}\).

The restriction
of a symmetric root \(\alpha\)
to the maximal \field-split subtorus \(\bT\textsub s\) of \bT
is both fixed (because \(\bT\textsub s\) is \field-split)
and negated (because \(\alpha\) is symmetric)
by
%\(\sigma_\alpha\);
some element of \(\Gamma\);
so \(\alpha\) is trivial on \(\bT\textsub s\).
That is, \(\Root(\bG, \bT)\symm = \Root(\bM, \bT)\symm\),
where \(\bM = C_\bG(\bT\textsub s)\).
Since \bT is \field-elliptic in \bM, the image of
\(\BB(\bT, \field)\) in \(\rBB(\bM, \field)\) is a singleton;
so it follows that
\(\tilde e(G, T, \gamma)\),
\(\varepsilon\symmunram(G, T, \gamma)\),
and
\(\varepsilon\symmram(G/G', \gamma)\)
do not depend on the choice of \(x \in \BB(\bT, \field)\).

By Remark \ref{rem:compare-centralizers}, we have that
\(\tilde e(\pi', \gamma)\) depends only on \(\gamma_{< r}\)
(and \(\pi'\), and, possibly, \bT), not on \(\gamma\);
and similarly for \(\varepsilon\symmunram\),
\(\varepsilon\nosymm_{x, r/2}\), \(e\),
\(\varepsilon\symmram\), and \(\varepsilon\noram_{x, r/2}\).

It seems likely that \(\tilde e(G/G', \gamma)\), but \emph{not}
\(\tilde e(G, T, \gamma)\),
is independent of the choice of \tamefield-split maximal
torus \bT in \(\bG'\) containing \(\gamma\),
and similarly for \(\varepsilon\symmunram\),
\(\varepsilon\nosymm_{x, r/2}\),
\(\varepsilon\symmram\),
and \(\varepsilon\noram_{x, r/2}\).
However, we currently do not have a proof of this.
(It is clear that \(e(G/G', \gamma)\) is independent of the
choice of \bT.)
This uncertainty seems to have the potential to affect our
character formul{\ae}
(Theorem \ref{thm:ratl}
and Theorem \ref{thm:almost-stable}).
However, by \xcite{adler-spice:explicit-chars}*
	{Propositions \xref{prop:theta-tilde-phi}
	and \xref{prop:gauss-sum}}
(and \cite{yu:supercuspidal}*{\S11} and
\xcite{adler-spice:explicit-chars}*{Definition 5.2.4}),
at least the product
\(\varepsilon\symmram(\pi', \gamma_{< r})
\varepsilon\noram(\pi', \gamma_{< r})
\tilde e(\pi', \gamma_{< r})\) that occurs in
Theorem \ref{thm:ratl} is independent of \bT;
and, in Theorem \ref{thm:almost-stable}
(and, indeed, throughout \S\ref{sec:stable}),
we are dealing with the case where \(\bG'\) is a torus, so
that there is only one possible choice of \bT.
\end{rem}

All the signs are invariant under rational conjugacy
(Remark \ref{rem:ratl-sign}).
Even better, \(e\) and \(\varepsilon\symmram\) are invariant
under \emph{stable} conjugacy (Lemma \ref{lem:stable-sign}).

\begin{rem}
\label{rem:ratl-sign}
If \(g \in G\), and we put
\begin{itemize}
\item \(x' = g\dota x\),
\item \(\bT' = \Int(g)\bT\),
and
\item \(\gamma' = \Int(g)\gamma\),
\end{itemize}
then it is clear that
\begin{align*}
\tilde e(G, T, \gamma)
	& {}= \tilde e(G, T', \gamma'), \\
\varepsilon\symmunram(G, T, \gamma)
	& {}= \varepsilon\symmunram(G, T', \gamma'), \\
\intertext{and}
\varepsilon\nosymm_{x', r/2}(G, T', \gamma')
	& {}= \varepsilon\nosymm_{x, r/2}(G, T, \gamma).
\end{align*}
\end{rem}

\begin{lem}
\label{lem:stable-sign}
If \(g \in \bG(\unfield)\) is such that
\(g\inv\Frob g \in \bT(\unfield)\), then
\(e(G/G', \gamma) = e(G/G', \Int(g)\gamma)\)
and
\(\varepsilon\symmram(G/G', \gamma)
= \varepsilon\symmram(G/G', \Int(g)\gamma)\).
\end{lem}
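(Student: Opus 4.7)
Set $\gamma' := \Int(g)\gamma$ and $\bT' := \Int(g)\bT$. Since $t_0 := g\inv\Frob g \in \bT(\unfield)$ and $\bT$ is Abelian, $\bT'$ is $\field$-rational and $\gamma' \in T'$. Moreover the map
\[
\iota : \Root(\bG, \bT) \xrightarrow{\sim} \Root(\bG, \bT'), \qquad \iota(\alpha) := \alpha \circ \Int(g\inv),
\]
is $\Gamma$-equivariant. For $\sigma \in \Gamma\unfix$ this is immediate, because $g$ is $\Gamma\unfix$-fixed; for $\sigma = \Frob$, the computation
\[
\Frob\iota(\alpha) = (\Frob\alpha)\circ\Int(\Frob g)\inv = (\Frob\alpha)\circ\Int(g\inv)\circ\Int(t_0\inv) = \iota(\Frob\alpha)
\]
uses that $\Int(t_0\inv)$ is trivial on $\bT$, which is where $\Frob\alpha$ is evaluated after applying $\Int(g\inv)$.

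Being $\Gamma$-equivariant, $\iota$ transports every combinatorial datum appearing in Definition~\ref{defn:signs}: the $\Gamma$-orbit decomposition; the symmetric/unramified/ramified/non-symmetric partitions; the fields $\field_\alpha$, $\field_{\pm\alpha}$ and their residue fields; the integers $e_\alpha$, $f_\alpha$; and the groups $\bG_{\pm\alpha}$ together with their ranks and the signs $\sgn_{\field_{\pm\alpha}}(\bG_{\pm\alpha})$. Since $\iota(\alpha)(\gamma') = \alpha(g\inv\gamma' g) = \alpha(\gamma)$, we also have $\ord_{\gamma'}\iota(\alpha) = \ord_\gamma\alpha$ and the condition $\alpha(\gamma) \ne 1$ is preserved. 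By Remark~\ref{rem:sign-choices}, $\iota$ matches $\Root'$ with its analogue for $\gamma'$ as well, since $e$ and $\varepsilon\symmram$ do not depend on the choice of maximal torus in $\bG'$. Consequently each factor $(-1)^{e_\alpha(r - \ord_\gamma\alpha)}$ in the product for $e(G/G', \gamma)$ matches its counterpart for $\gamma'$, yielding the first equality.

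For $\varepsilon\symmram(G/G', \gamma)$, the factors $(-1)^{\rk_{\field_{\pm\alpha}}\bG_{\pm\alpha} - 1}$, $(-\mf G)^{f_\alpha}$, and $\sgn_{\field_{\pm\alpha}}(\bG_{\pm\alpha})$ transport identically via $\iota$. The only subtle factor is $\sgn_{\ff_\alpha}(t_\alpha)$. The quantity $t_\alpha$ is built from $\textup d\alpha^\vee(X^*)$, the auxiliary element $w_\alpha$, and $\alpha(\gamma) - 1$; the last is fixed by $\iota$. Under the transport $X^* \mapsto \Ad^*(g)X^*$, and a compatible choice of $w_{\iota(\alpha)}$, the element $t_{\iota(\alpha)}$ differs from $t_\alpha$ only by a unit in the maximal unramified extension of $\field_{\pm\alpha}$, whose image in $\ff_\alpha^\times$ is a square; hence $\sgn_{\ff_\alpha}$ is unchanged.

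The main obstacle is precisely controlling $\sgn_{\ff_\alpha}(t_\alpha)$ under \emph{stable}, rather than rational, conjugation. Under rational conjugacy the invariance is immediate (Remark~\ref{rem:ratl-sign}); the hypothesis $g \in \bG(\unfield)$ is essential because it confines the ambiguity in the choices of $w_\alpha$ and in the representative of $X^*$ to the unramified direction, where the quadratic character $\sgn_{\ff_\alpha}$ becomes trivial on the residue field.
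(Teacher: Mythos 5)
Your handling of $e(G/G',\gamma)$ is fine — the paper also dismisses this as clear, since its indexing set $\set{\alpha \in \Root\setminus\Root'}{\alpha(\gamma)\ne 1}$ involves only the conditions $\alpha(\gamma)\ne 1$ and $\alpha\notin\Root'$, both of which your map $\iota$ visibly preserves.

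For $\varepsilon\symmram$, however, there is a genuine gap: you never address the dependence on the point $x$. The product defining $\varepsilon\symmram(G/G',\gamma)$ runs over $\Gamma\bslash\Root(\pi',\gamma)\symmram$, and $\Root(\pi',\gamma) = \Root_{x,(r-\ord_\gamma)/2}\setminus\Root'$ is cut out by the condition $(r-\ord_\gamma\alpha)/2 \in \ord_x(\alpha)$, where $\ord_x(\alpha)$ depends on the point $x\in\BB(\bT,\field)$. Your list of ``combinatorial data transported by $\iota$'' omits $\ord_x(\alpha)$ entirely. And this is exactly where the work lies: since $g\in\bG(\unfield)$ is only $\Gamma\unfix$-fixed (not $\Gamma$-fixed), $g\dota x$ need not lie in $\BB(\bT',\field)$, so one cannot simply carry $x$ along. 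The paper's proof chooses a (possibly different) point $y\in\BB(\bT,\unfield)$ with $g\dota y\in\BB(\bT',\field)$ and then establishes that $\ord_x\alpha=\ord_y\alpha$ for all symmetric \emph{ramified} $\alpha$; the key step is the pairing computation
\[
2\pair\alpha\lambda = \pair{(1-\sigma_\alpha)\alpha}\lambda = \pair\alpha{(1-\sigma_\alpha\inv)\lambda} = 0
\]
for $\lambda\in(\bX_*(\bT)\otimes\R)^\Gamma$ with $x+\lambda=y$, which uses the ramified-symmetric hypothesis ($\sigma_\alpha\in\Gamma\unfix$). Without some version of this argument, the bijection between $\Root(\pi',\gamma)\symmram$ and its analogue for $\gamma'$ is unestablished and the products cannot be matched termwise.

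Your appeal to Remark~\ref{rem:sign-choices} also misreads it: that remark asserts independence of the choice of $x\in\BB(\bT,\field)$ \emph{with $\bT$ held fixed}, and explicitly \emph{disclaims} (``we currently do not have a proof of this'') independence of the choice of torus. You invoke it for precisely the statement it declines to assert. Finally, the discussion of $\sgn_{\ff_\alpha}(t_\alpha)$ is both murky and beside the point: the phrase ``a unit in the maximal unramified extension of $\field_{\pm\alpha}$ whose image in $\ff_\alpha\mult$ is a square'' does not parse (such units reduce into $\ol\ff$, not $\ff_\alpha$). In fact, with $X^*$ transported by $\Ad^*(g)$ and $w_{\alpha'}=w_\alpha$, one gets $t_{\alpha'}=t_\alpha$ on the nose, so no delicate square-class argument is needed there — the obstacle you identify as ``the main obstacle'' is not where the difficulty actually lives.
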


\begin{proof}
Put \(\bT' = \Int(g)\bT\) and \(\gamma' = \Int(g)\gamma\).
Note that the group \(\bT'\) and the map \map{\Int(g)}\bT{\bT'}
are defined over \field;
in particular, \(\gamma' \in T'\).
The equality for \(e\) is clear, so we consider only the one
for \(\varepsilon\symmram\).

The element \(g\) induces a natural, \(\Gamma\)-equivariant
bijection \anonmap{\Root(\bG, \bT)}{\Root(\bG, \bT')},
which we denote by \anonmapto\alpha{\alpha'},
such that \(\alpha(\gamma) = \alpha'(\gamma')\)
for all \(\alpha \in \Root(\bG, \bT)\).
Since it is \(\Gamma\)-equivariant, this identification
preserves the notion of symmetry of a root,
and of (un)ramifiedness of a symmetric root.

Choose \(y \in \BB(\bT, \unfield)\)
such that \(g\dota y \in \BB(\bT', \unfield)\)
is \(\Gamma\)-fixed, \ie, lies in \(\BB(\bT', \field)\).
We claim that, for any symmetric and ramified
\(\alpha \in \Root(\bG, \bT)\),
we have \(\ord_x \alpha = \ord_y \alpha\).
Note that \(\alpha\) remains symmetric and ramified
after an \emph{unramified} base change.
Further, by Remark \ref{rem:ord-unram}, the sets
\(\ord_x \alpha\) and \(\ord_y \alpha\) are not affected by
such a change; so we may, and do, assume that
\(x, y \in \BB(\bT, \field) = \AA(\bT_\sepfield, \undefined)^\Gamma\).
Since \(\AA(\bT_\sepfield, \undefined)\) is a torsor under
\(\bX_*(\bT) \otimes_\Z \R\),
where \(\bX_*(\bT)\) is the cocharacter lattice
of \(\bT_\sepfield\),
there is a well defined, \(\Gamma\)-fixed element
\(\lambda \in \bX_*(\bT) \otimes_\Z \R\)
such that \(x + \lambda = y\).
Then
\[
2\pair\alpha\lambda
= \pair{(1 - \sigma_\alpha)\alpha}\lambda
= \pair\alpha{(1 - \sigma_\alpha\inv)\lambda}
= \pair\alpha0 = 0,
\]
as desired.
It is clear that also
\(\ord_y \alpha = \ord_{g\dota y} \alpha'\)
(for \emph{any} root \(\alpha\), whether or not symmetric
and ramified).
Thus, the identification of \(\Root(\bG, \bT)\)
with \(\Root(\bG, \bT')\) establishes a bijection between
\(\Root(\bG, \bT)_
	{\textup{\(x, (r - \ord_\gamma)/2\), symm, ram}}\)
and
\(\Root(\bG, \bT')_
	{\textup{\(g\dota y, (r - \ord_{\gamma'})/2\), symm, ram}}\).
The result follows.
\end{proof}

Proposition \ref{prop:stable-sign} below is the crucial
ingredient in our proof of the stability of certain
character sums (Theorem \ref{thm:stable}).

\begin{prop}
\label{prop:stable-sign}
If \(\bG'\) contains a maximally \unfield-split torus in
\bG, then
\[
\tilde e(\pi', \gamma)
= (-1)^{\rk_\field \bG - \rk_\field \bG'}\dotm
(-1)^{\rk_\field \bH - \rk_\field \bH'}\dotm
e(\pi', \gamma).
\]
\end{prop}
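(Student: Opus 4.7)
The plan is to reduce the identity to two parallel statements
\[
\tilde e(G/G',\gamma) = (-1)^{\rk_\field \bG - \rk_\field \bG'} \cdot e(G/G',\gamma)
\quad\text{and}\quad
\tilde e(H/H',\gamma) = (-1)^{\rk_\field \bH - \rk_\field \bH'} \cdot e(H/H',\gamma),
\]
whose ratio (using $(-1)^{-a} = (-1)^a$) recovers the claim. The hypothesis is inherited by $(\bH,\bH')$: a maximally $\unfield$-split torus $\bS\subseteq\bG'$ of $\bG$ may, by $\bG'(\unfield)$-conjugation, be arranged to lie inside $\bT$; it then centralises $\gamma_{<r}\in T$, hence lies in $\bH\cap\bG'=\bH'$, and $\rk_\unfield \bH \le \rk_\unfield \bG = \dim\bS$ forces maximality in $\bH$.

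For the first identity, I would expand both sides as signs with combinatorial exponents. By Remark \ref{rem:sign-choices},
\[
\tilde e(G/G',\gamma) = (-1)^{N}, \qquad
N := \bigl|\Gamma\backslash(\Root\setminus\Root')\symm_{x,(r-\ord_\gamma)/2}\bigr|.
\]
Grouping the defining product of $e(G/G',\gamma)$ by $\Gamma$-orbits, using that $e_\alpha$, $\ord_\gamma\alpha$, and the condition $\alpha(\gamma)\ne 1$ are orbit invariants, and that $[\Gamma:\Gamma_\alpha]\,e_\alpha(r-\ord_\gamma\alpha) = e_\alpha^2 f_\alpha(r-\ord_\gamma\alpha) \equiv e_\alpha f_\alpha(r-\ord_\gamma\alpha) \pmod 2$, yields
\[
e(G/G',\gamma) = (-1)^{E}, \qquad
E \equiv \sum_{\substack{O\in\Gamma\backslash(\Root\setminus\Root') \\ \alpha_O(\gamma)\ne 1}} e_\alpha f_\alpha(r-\ord_\gamma\alpha) \pmod 2.
\]

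The crucial use of the hypothesis is geometric. Since $\bG'_\tamefield$ is a Levi of $\bG_\tamefield$ and $Z(\bG')\conn$ is $\field$-rational, the Levi structure descends to $\unfield$, so $\Root\setminus\Root'$ consists of those $\alpha$ with $\alpha|_{Z(\bG')\conn}\ne 0$. The hypothesis that $\bG'$ contains a maximal $\unfield$-split torus of $\bG$ constrains $Z(\bG')\conn$ so that its $\unfield$-split part already lies in $\bS$; a careful analysis then shows every $\alpha \in \Root\setminus\Root'$ with $\alpha(\gamma)\ne 1$ is symmetric, and, invoking Corollary \ref{cor:ord-symm} together with $e_\alpha = 2e_{\pm\alpha}$ in the ramified case, that such $\alpha$ automatically satisfies the jump condition $(r-\ord_\gamma\alpha)/2\in\ord_x\alpha$. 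Consequently $N = \bigl|\Gamma\backslash(\Root\setminus\Root')\bigr|$.

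The final step, which I anticipate to be the main obstacle, is matching $N+E \pmod 2$ with $\rk_\field\bG-\rk_\field\bG'$. The strategy is to appeal to Kottwitz's sign theorem \cite{kottwitz:sign-change}, which expresses $(-1)^{\rk_\field\bG-\rk_\field\bT}$ combinatorially in terms of the Frobenius action on $\Root$; taking the ratio with the $\bG'$-version localises the parity to $\Gamma$-orbits in $\Root\setminus\Root'$, and a termwise comparison should identify Kottwitz's per-orbit contribution with the combined parity of $1$ (from $N$) and $e_\alpha f_\alpha(r-\ord_\gamma\alpha)$ (from $E$). The delicate bookkeeping—coupling the ramification parity to the $\tfrac12 e_\alpha\inv\Z$-coset structure of $\ord_x\alpha$ and to the residue of $r-\ord_\gamma\alpha$ modulo $2$—is the technical heart. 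Applying the same argument to $(\bH,\bH')$ and taking the ratio then concludes the proof.
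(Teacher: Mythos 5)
There is a genuine gap in the middle of your argument. You assert that, under the hypothesis, ``every $\alpha \in \Root\setminus\Root'$ with $\alpha(\gamma)\ne 1$ is symmetric'' and that ``such $\alpha$ automatically satisfies the jump condition $(r-\ord_\gamma\alpha)/2\in\ord_x\alpha$.'' Both claims are false. For the first, take $\bG = \GL_3$ with $\bT$ the torus of an \emph{unramified} degree-$3$ extension and $\bG' = \bT$ (a toral pair). Then $\bT$ is $\unfield$-split of full rank, so the hypothesis is satisfied, but $\Gamma$ acts on $\Root$ through a cyclic group of order $3$, which contains no element negating any root: every root is non-symmetric. For the second, even for a symmetric ramified $\alpha$, Corollary~\ref{cor:ord-symm} gives $\ord_x\alpha = \Z_\alpha$ or $\Z_\alpha + \tfrac1 2 e_\alpha\inv$, and $(r-\ord_\gamma\alpha)/2$ lies in $\tfrac1 2\Z_\alpha$; whether it lies in $\ord_x\alpha$ is exactly a parity condition on $e_\alpha(r-\ord_\gamma\alpha)$, not an automatic fact. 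So your derivation $N = |\Gamma\backslash(\Root\setminus\Root')|$ does not hold, and the bookkeeping you flag as the ``technical heart'' collapses.

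The paper's proof is structured differently. After reducing to $\gamma = \gamma_{<r}$ (which makes the $H/H'$ factors on both sides trivial, so your two-part split is not needed), it uses genericity (\cite{yu:supercuspidal}*{\S8, \textbf{GE1}}) to observe that $r - \ord_\gamma\alpha\in\Z_\alpha$ for $\alpha\notin\Root'\cup\Root_\bH$, then applies Corollary~\ref{cor:ord-symm} to translate the jump condition into the dichotomy ``$0\in\ord_x\alpha$ and $e_\alpha(r-\ord_\gamma\alpha)$ even, or $0\notin\ord_x\alpha$ and $e_\alpha(r-\ord_\gamma\alpha)$ odd.'' This rewrites $\tilde e(\pi',\gamma)$ as
$(-1)^{|\Gamma\backslash(\Root_x\setminus(\Root'\cup\Root_\bH))|}\prod_\alpha (-1)^{e_\alpha(r-\ord_\gamma\alpha)}$
with $\Root_x = \set{\alpha}{0\in\ord_x\alpha}$, and the identification of this expression with $(-1)^{\rk_\field\bG-\rk_\field\bG'}(-1)^{\rk_\field\bH-\rk_\field\bH'}e(\pi',\gamma)$ is then outsourced to an external combinatorial result, \cite{spice:signs-alg}*{Proposition~\xref{prop:AS-Levi}}. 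That proposition is where the hypothesis ``$\bG'$ contains a maximally $\unfield$-split torus'' is actually consumed; your attempt to use it directly through a symmetry dichotomy on $\Root\setminus\Root'$ goes in the wrong direction. A direct appeal to the Kottwitz sign formula, as you gesture at, would indeed be a natural ingredient of such a comparison, but without the precise per-orbit parity identity (which that reference supplies) your argument is not a proof.
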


\begin{proof}
We may, and do, assume that \(\gamma = \gamma_{< r}\).

If \(\alpha \not\in \Root' \cup \Root_\bH\),
then we have that
\(r - \ord_\gamma \alpha
= \ord \textup d\alpha^\vee(X^*) - \ord(\alpha(\gamma) - 1)
\in \Z_\alpha\).
By Corollary \ref{cor:ord-symm}, therefore,
\(\frac1 2(r -  \ord_\gamma \alpha) \in \ord_x \alpha\)
if and only if
\(0 \in \ord_x \alpha\)
	and \(r - \ord_\gamma \alpha \in 2\Z_\alpha\),
or
\(0 \not\in \ord_x \alpha\)
	and \(r - \ord_\gamma \alpha \not\in 2\Z_\alpha\).
Thus,
\begin{align*}
\tilde e(\pi', \gamma)
={} & (-1)^
	{\smcard{
		\Gamma\bslash
			(\Root_x \setminus (\Root' \cup \Root_\bH))
	}}
\times{} \\
    & \qquad
	\prod_
	{\alpha \in
		\Gamma\bslash
			(\Root \setminus (\Root' \cup \Root_\bH))
	}
	(-1)^{e_\alpha(r - \ord_\gamma \alpha)},
\end{align*}
where \(\Root_x = \set{\alpha \in \Root}{0 \in \ord_x \alpha}\).
The result now follows from
\xcite{spice:signs-alg}*{Proposition \xref{prop:AS-Levi}}.
\end{proof}

Finally, we relate our roots of unity from
Definition \ref{defn:signs} to those occurring in
\cite{adler-spice:explicit-chars}.
Proposition \ref{prop:root} and its proof may be challenging
to read, because it is comparing two different systems of
notation%
---the \textit{ad hoc} system of
\cite{adler-spice:explicit-chars}, and the
more nearly uniform system of this paper---%
that assign conflicting meanings to some symbols.
We indicate explicitly when we are using the notation
of \cite{adler-spice:explicit-chars}.

\begin{prop}
\label{prop:root}
The quantity
\(\varepsilon(\phi, \gamma)\mf G(\phi, \gamma)\)
(in the notation of
\xcite{adler-spice:explicit-chars}*
	{Propositions
	\xref{prop:theta-tilde-phi}
	and
	\xref{prop:gauss-sum}})
equals
\(\varepsilon\symmram(\pi', \gamma)
\varepsilon\noram(\pi', \gamma)
\tilde e(\pi', \gamma)\)
(in the notation of Definition \ref{defn:signs}).
\end{prop}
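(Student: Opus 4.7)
The plan is to expand $\varepsilon(\phi,\gamma)\mathfrak G(\phi,\gamma)$ using the two cited propositions from \cite{adler-spice:explicit-chars}, which express these as products of local contributions indexed by $\Gamma$-orbits of roots, and then to partition these products according to the Galois type of the root (non-symmetric, symmetric unramified, symmetric ramified). With the partition in place, each piece of the partition should match, term-by-term, the analogously named factor on the right-hand side. The overall structure is a large, but essentially mechanical, bookkeeping argument.

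First, I would unpack $\varepsilon(\phi,\gamma)$ via Proposition \xref{prop:theta-tilde-phi}. That statement decomposes $\varepsilon(\phi,\gamma)$ into a product of $\sgn_{\ff_\alpha}$- and $\sgn_{\ff_\alpha^1}$-factors over the non-symmetric and symmetric unramified $\Gamma$-orbits, together with an overall sign depending on which orbits meet the set $\Root(\pi',\gamma)$. Since the definition of $\Root(\pi',\gamma) = \Root_{x,(r-\ord_\gamma)/2}\setminus\Root'$ encodes precisely the condition isolated by Corollary \ref{cor:ord-symm} and Remark \ref{rem:compare-centralizers}, the $\sgn$-pieces assemble to $\varepsilon\nosymm_{x,r/2}(\pi',\gamma)\varepsilon\symmunram(\pi',\gamma) = \varepsilon\noram(\pi',\gamma)$, and the remaining overall sign is absorbed into $\tilde e(\pi',\gamma)$.

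Next, I would unpack $\mathfrak G(\phi,\gamma)$ via Proposition \xref{prop:gauss-sum}. That statement writes $\mathfrak G(\phi,\gamma)$ as a product, over the symmetric ramified $\Gamma$-orbits meeting $\Root(\pi',\gamma)$, of factors of the form $(-\mathfrak G)^{f_\alpha}\sgn_{\ff_\alpha}(t_\alpha)\sgn_{\field_{\pm\alpha}}(\bG_{\pm\alpha})$, together with sign adjustments of the form $(-1)^{\rk_{\field_{\pm\alpha}}\bG_{\pm\alpha}-1}$. This is exactly the product defining $\varepsilon\symmram(G/G',\gamma)$, up to the $H$-version that is removed when forming $\varepsilon\symmram(\pi',\gamma) = \varepsilon\symmram(G/G',\gamma)/\varepsilon\symmram(H/H',\gamma)$. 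The removal of the $H$-contributions is justified by Remark \ref{rem:compare-centralizers}, which identifies $\wtilde\Root_\bH$ with the set of roots of depth $\ge r$ at $\gamma$ and hence excludes them from $\Root(\pi',\gamma)$.

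Finally, I would collect the residual $\pm 1$-factors. These come from three sources: the $(-1)^{|{\Gamma\bslash\Root_{x,(r-\ord_\gamma)/2}}|}$ tally in the definition of $\tilde e(G,T,\gamma)$, the parity corrections appearing in Proposition \xref{prop:theta-tilde-phi}, and the Galois-orbit counts distinguishing the $\pm\Gamma$- and $\Gamma$-orbit products in Definition \ref{defn:signs}. The main obstacle will be verifying that these parities line up: one must track carefully how $\Root(\pi',\gamma)$ splits into its symmetric and non-symmetric pieces, that $\pm\Gamma$-orbits of non-symmetric roots account for two $\Gamma$-orbits, and that the isolated sign in the cited proposition matches $\tilde e(G/G',\gamma)/\tilde e(H/H',\gamma) = \tilde e(\pi',\gamma)$ after removing the factors already absorbed into $\varepsilon\symmram$ and $\varepsilon\noram$. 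Once this bookkeeping is done, the equality is immediate.
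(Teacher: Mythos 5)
Your outline correctly identifies the high-level shape of the argument (partition by Galois type of root, match factor-by-factor against the definitions, keep separate track of the overall signs), and your treatment of the $\mathfrak G(\phi,\gamma)$ side is essentially right: the cited Gauss-sum proposition does deliver $\mathfrak G(\phi,\gamma) = (-1)^{\lvert\Gamma\backslash\Upsilon\symm\rvert}\varepsilon\symmram(\pi',\gamma)$ nearly verbatim, and the $H$-contributions drop out by Remark~\ref{rem:compare-centralizers} just as you say. The proposal is also right to first replace products over $\Root_{x,r/2}\setminus\Root'$ by products over $\Root(\pi',\gamma)$, using that the $\sgn$-factors vanish on $\Upsilon$ because $\alpha(\gamma)\in 1+\pp_\alpha$ there.

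The gap is in your treatment of $\varepsilon(\phi,\gamma)$. You assert that the cited proposition ``decomposes $\varepsilon(\phi,\gamma)$ into a product of $\sgn_{\ff_\alpha}$- and $\sgn_{\ff_\alpha^1}$-factors,'' so that the matching is pure bookkeeping. It does not. In the source, $\varepsilon(\phi,\gamma)$ is defined as the \emph{angular component} of a product of Weil-representation traces $\theta_{W^{V_\alpha}_\zeta}(\gamma)$ indexed by the symmetric and non-symmetric orbits in $\Xi=\Xi(\phi,\gamma)$, and extracting a $\sgn_{\ff_\alpha^1}$-factor from the symmetric pieces is genuine work, not a lookup. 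The paper does two non-trivial things that your proposal skips entirely. First, it rules out symmetric \emph{ramified} roots at depth zero: for $\alpha\in\Xi\symmram$ the pairing defining $V_\alpha$ is forced to be identically zero (the element $c_\alpha$ with $\sigma_\alpha c_\alpha=-c_\alpha$ must vanish since $\sigma_\alpha\in\Gamma\unfix$ acts trivially on $\ff_\alpha$ and $p\ne 2$), which contradicts $r/2\in\ord_x\alpha$; hence $\Xi\symm=\Xi\symmunram$. Without this, the symmetric ramified part of $\varepsilon\noram$ would not match up. Second, for $\alpha\in\Xi\symmunram$ the paper deliberately switches from G\'erardin's Theorem~4.9.1 (used in the cited proposition's own proof) to Corollary~4.8.1, and then carries out a fresh orbit-by-orbit analysis of the weights of the torus $\ms S_\alpha$ on $V_\alpha$ --- showing the orbit count is odd, that $N(V_\alpha;\gamma_\alpha)=0$, and that the G\'erardin character $\chi^{\ms S_\alpha}_\Omega$ equals $\sgn_{\ff_\alpha^1}$ --- to arrive at $\theta_{W^{V_\alpha}_\zeta}(\gamma)=-\sgn_{\ff_\alpha^1}(\alpha(\gamma))$. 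Without this computation there is no reason the ``parities line up'': the $(-1)^{\lvert\Gamma\backslash\Xi\symm\rvert}$ that combines with $(-1)^{\lvert\Gamma\backslash\Upsilon\symm\rvert}$ to give $\tilde e(\pi',\gamma)$ is produced exactly by the minus sign in $-\sgn_{\ff_\alpha^1}(\alpha(\gamma))$, which your proposal never derives. So the argument as written has the right target but does not establish the crucial middle step.
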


\begin{proof}
We may, and do, assume that \(\gamma = \gamma_{< r}\).

Preserve the notation of Definition \ref{defn:signs}.
There are defined in
\xcite{adler-spice:explicit-chars}*
	{Notations \xref{notn:weil} and \xref{notn:gauss}}
disjoint sets
\(\Xi(\phi, \gamma)\) and \(\Upsilon(\phi, \gamma)\)
such that
\(\Xi(\phi, \gamma) \cup \Upsilon(\phi, \gamma)
= \Root(\pi', \gamma)\).
Those roots such that \(\ord_\gamma \alpha = 0\) lie in
\(\Xi(\phi, \gamma)\),
and those such that \(0 < \ord_\gamma \alpha < r\) in
\(\Upsilon(\phi, \gamma)\).
%(By \xcite{adler-spice:good-expansions}*
%	{Lemma \xref{lem:compare-centralizers}},
%any root \(\alpha\) satisfying
%\(\ord_\gamma \alpha \ge r\) lies in \(\Root(\bH, \bT)\).)
Note in particular that \(\alpha(\gamma) \in 1 + \pp_\alpha\)
whenever \(\alpha \in \Upsilon(\phi, \gamma)\).
We abbreviate \(\Xi(\phi, \gamma)\) to \(\Xi\),
and \(\Upsilon(\phi, \gamma)\) to \(\Upsilon\).

Now note that
\begin{align*}
\varepsilon\symmunram(\pi', \gamma)
& {}= \prod_{\alpha \in \Gamma\bslash
	(\Root\textsub{\(x, r/2\), symm, unram} \setminus \Root')}
	\sgn_{\ff_\alpha^1}(\alpha(\gamma)) \\
& {}= \prod_{\alpha \in \Gamma\bslash\Xi\symmunram}
	\sgn_{\ff_\alpha^1}(\alpha(\gamma))
\dotm\prod_{\substack
	{\alpha \in \Gamma\bslash
		(\Root\textsub{\(x, r/2\), symm, unram} \setminus \Root') \\
	\ord_\gamma \alpha > 0}
}
	\underbrace{\sgn_{\ff_\alpha^1}(\alpha(\gamma))}
		_{(*)} \\
& {}= \prod_{\alpha \in \Gamma\bslash\Xi\symmunram}
	\sgn_{\ff_\alpha^1}(\alpha(\gamma))
\dotm\prod_{\alpha \in \Gamma\bslash\Upsilon\symmunram}
	\underbrace{\sgn_{\ff_\alpha^1}(\alpha(\gamma))}
		_{(*)} \\
& {}= \prod
	_{\alpha \in \Gamma\bslash\Root(\pi', \gamma)\symmunram}
	\sgn_{\ff_\alpha^1}(\alpha(\gamma)),
\end{align*}
since each multiplicand (\(*\)) equals \(1\).
Similarly,
\[
\varepsilon\nosymm(\pi', \gamma)
= \prod_{\alpha \in \Gamma\bslash\Root(\pi', \gamma)\nosymm}
	\sgn_{\ff_\alpha}(\alpha(\gamma)).
\]
We actually work in the proof with these products,
rather than the ones appearing in Definition \ref{defn:signs}.

The proof of \xcite{adler-spice:explicit-chars}*
	{Proposition \xref{prop:theta-tilde-phi}}
uses \cite{gerardin:weil}*{Theorem 4.9.1}.
As mentioned there, one could instead use
\cite{gerardin:weil}*{Corollary 4.8.1}; we do so here.
Recall the decomposition
\[
\mc V
= \mc V^{({0+})} \oplus
\bigoplus_{\alpha \in \Gamma\bslash\Xi\symm}
	V_\alpha \oplus
\bigoplus_{\alpha \in {\pm\Gamma}\bslash\Xi\nosymm}
	V_{\pm\alpha}
\]
\xcite{adler-spice:explicit-chars}*{\xeqref{eq:weil-V-decomp}}.
Then, in the notation of
\xcite{adler-spice:explicit-chars}*
	{Proposition \xref{prop:theta-tilde-phi}}
(and its proof),
the quantity denoted there by
\(\varepsilon(\phi, \gamma)\) is the angular component of
the complex number
\begin{equation}
\tag{$\dag$}
\label{eq:weil-factor}
\theta_{W^{\mc V^{({0+})}}_\zeta}(\gamma)\dotm
\prod_{\alpha \in \Gamma\bslash\Xi\symm}
	\theta_{W^{V_\alpha}_\zeta}(\gamma)\dotm
\prod_{\alpha \in {\pm\Gamma}\bslash\Xi\nosymm}
	\theta_{W^{V_{\pm\alpha}}_\zeta}(\gamma)
\end{equation}
\xcite{adler-spice:explicit-chars}*
	{proof of Proposition \xref{prop:theta-tilde-phi},
	p.~1159, (\dag)}.
By \xcite{adler-spice:explicit-chars}*
	{\xeqref{eq:weil-trivial} and \xeqref{eq:weil-non-symm}},
\(\theta_{W^{\mc V^{({0+})}}_\zeta}(\gamma)\) is a positive
number,
and
\begin{equation}
\tag{$\dag\dag$}
\label{eq:nosymm-sign}
\prod_{\alpha \in {\pm\Gamma}\bslash\Xi\nosymm}
	\theta_{W^{V_{\pm\alpha}}_\zeta}(\gamma)
= \prod_{\alpha \in {\pm\Gamma}\bslash\Xi\nosymm}
	\sgn_{\ff_\alpha}(\alpha(\gamma))
= \varepsilon\nosymm(\pi', \gamma),
\end{equation}
where the notation on the right is that of
Definition \ref{defn:signs}
(and we have used that
\(\alpha(\gamma)\) is in \(1 + \pp_\alpha\)
for \(\alpha \in \Upsilon\)).
Thus, to understand the quantity
\(\varepsilon(\phi, \gamma)\) of 
\cite{adler-spice:explicit-chars},
it remains only to compute the angular component of
\(\theta_{W^{V_\alpha}_\zeta}(\gamma)\)
for \(\alpha \in \Xi\symm\).

If \(\alpha \in \Xi\symmram\),
then, as in \xcite{adler-spice:explicit-chars}*
	{proof of Proposition \xref{prop:theta-tilde-phi},
	p.~1159},
we obtain a non-degenerate pairing
\anonmapto
	{t_1 \otimes t_2}
	{e_\alpha\tr_{\ff_\alpha/\mathbb F_p}
		c_\alpha\dotm t_1\sigma_\alpha(t_2)}
on \(\ff_\alpha\), where
\(c_\alpha \in \ff_\alpha\)
satisfies \(\sigma_\alpha c_\alpha = -c_\alpha\).
Since \(\sigma_\alpha \in \Gamma\unfix\) acts trivially on
\(\ff_\alpha\), and \(p \ne 2\),
we have that \(c_\alpha = 0\).
Therefore, the non-degenerate pairing above is identically \(0\),
so its domain \(V_\alpha\) is \(0\).
By definition
\xcite{adler-spice:explicit-chars}*
	{Notation \xref{notn:weil}},
we have that
\(V_\alpha = \pmb\fg_\alpha(\field_\alpha)_{x, r/2:{(r/2)+}}\);
so, by Definition \ref{defn:ord-x}, this means that
\(r/2 = \tfrac1 2(r - \ord_\gamma \alpha)\)
does \emph{not} lie in \(\ord_x \alpha\),
which is a contradiction.
	%% A better argument would be that a ramified, symmetric
	%% root \alpha cannot satisfy \ord_\gamma \alpha = 0;
	%% but that's false:
	%% Let \(\gamma\) be the image of \(\sqrt\varpi\)
	%% in the obvious torus in \(\PGL_2\).

Thus \(\Xi\symm = \Xi\symmunram\).
Choose \(\alpha \in \Xi\symmunram\),
so that \(f_\alpha = 2f_{\pm\alpha}\).
There is an \ff-linear isomorphism
\begin{equation}
\tag{$\dag\dag\dag$}
\label{eq:weil-factor-iso}
V_\alpha \cong \ff_\alpha
\end{equation}
that identifies the symplectomorphism
\(\gamma_\alpha\) of \(V_\alpha\) induced by \(\gamma\)
with multiplication by \(\alpha(\gamma)\);
%and the symplectic pairing on the left with
%the pairing
%\(s \otimes t \mapsto \tr_{\ff_\alpha/\ff}(
%	s\dotm\sigma_\alpha t - \sigma_\alpha s\dotm t
%)\) on the right.
and
there is an \(\ff_\alpha\)-split \ff-torus \(\ms S_\alpha\)
in \(\Sp_{V_\alpha}\) such that an element
\(\ol s \in \Sp_{V_\alpha}(\ff)\) lies in
\(\ms S_\alpha(\ff)\) if and only if it is identified
\via \eqref{eq:weil-factor-iso} with
multiplication by an element of
\(\ff_\alpha^1\).
In particular, \(\gamma_\alpha \in \ms S_\alpha(\ff)\).

In order to apply \cite{gerardin:weil}*{Corollary 4.8.1},
we need to understand the \(\pm\Gamma\)-orbits of weights of
the action of \(\ms S_\alpha\) on \(V_\alpha\).
Since \(\sigma_\alpha\) acts on
the character lattice \(\bX^*(\ms S_\alpha)\)
of \((\ms S_\alpha)_{\ff\sep}\)
by negation, every such weight is symmetric.
Now, if \(\Omega\) is any orbit of weights,
then, in the notation of
\cite{gerardin:weil}*{Definition 4.6},
\(i(\Omega) = \tfrac1 2\card\Omega\) is the least
positive integer \(i\)
such that \(\Frob^i\) acts by negation on \(\Omega\).
Now
\(\Frob^{f_{\pm\alpha}} \in \sigma_\alpha\Gamma\unfix\),
so that \(\Frob^{f_{\pm\alpha}}\) and \(\sigma_\alpha\)
act the same on \(\bX^*(\ms S_\alpha)\).
It follows that \(f_{\pm\alpha}\) is an odd multiple of
\(i(\Omega)\),
so that \(f_\alpha = 2f_{\pm\alpha}\)
is an odd multiple of \(\card\Omega = 2i(\Omega)\)
(in particular, \(f_\alpha\) and \(\card\Omega\) have the
same parity)
and
\(q_{\pm\alpha} = q^{f_{\pm\alpha}}\)
is an odd power of
\(q_\Omega \ldef q^{i(\Omega)}\).
This means that the character \(\chi_\Omega^{\ms S_\alpha}\)
of \cite{gerardin:weil}*{Lemma 4.6(2)(e)} is
\[
\ol s \mapsto \epsilon(\ol s)^{(1 + q_\Omega)/2}
= \epsilon(\ol s)^{(1 + q_{\pm\alpha})/2}
= \sgn_{\ff_\alpha^1}(\epsilon(\ol s)),
\]
where \(\epsilon\) is any element of \(\Omega\).
We have used that
\(\epsilon(\ol s)^{(1 + q_\Omega)/2}\)
	is in \(\sset{\pm1}\)
for all \(\ol s \in \ms S_\alpha(\ff)\),
and that
\(q_{\pm\alpha} + 1
%= \frac{q_\alpha - 1}{q_{\pm\alpha} - 1}
%= \frac
%	{\card{\ff_\alpha\cross}}
%	{\card{\ff_{\pm\alpha}\cross}}
= \smcard{\ff_\alpha^1}\).
Finally, since there are
\(\dim_\ff V_\alpha = f_\alpha\)
weights of \(\ms S_\alpha\) on \(V_\alpha\),
and the size of each orbit has the same parity as
\(f_\alpha\),
the number \(l(V_\alpha, \ms S_\alpha; \gamma_\alpha)\)
of orbits of weights is odd.
Since \eqref{eq:weil-factor-iso} identifies
\(\gamma_\alpha\) with multiplication by the
projection to \(\ff_\alpha\) of
\(\alpha(\gamma) \in \pint_\alpha\),
we have that
\(N(V_\alpha; \gamma_\alpha)
\ldef \frac1 2\dim \ker \res(\gamma_\alpha - 1)to{V_\alpha}
= 0\)
and, for any weight \(\epsilon\), that
\(\gamma_\alpha^\epsilon\) is a Galois conjugate of
\(\alpha(\gamma)\).
Thus, \cite{gerardin:weil}*{Corollary 4.8.1} gives
\begin{align*}
\theta_{W^{V_\alpha}_\zeta}(\gamma)
& = (-1)^{l(V_\alpha, \ms S_\alpha; \gamma_\alpha)}
q^{N(V_\alpha; \gamma_\alpha)}
\prod_\Omega \chi^{\ms S_\alpha}_\Omega(\gamma_\alpha) \\
& = \bigl(
	-\sgn_{\ff_\alpha^1}(\alpha(\gamma))
\bigr)^{l(V_\alpha, \ms S_\alpha; \gamma_\alpha)} \\
& = -\sgn_{\ff_\alpha^1}(\alpha(\gamma)).
\end{align*}
That is, by \eqref{eq:weil-factor} and
\eqref{eq:nosymm-sign},
\begin{equation}
\tag{$\ddag_0$}
\label{eq:shallow-sign}
\begin{aligned}
\varepsilon(\phi, \gamma)
& {}= (-1)^{\card{\Gamma\bslash\Xi\symm}}\dotm
\varepsilon\nosymm(\pi', \gamma)\dotm
\prod_{\alpha \in \Gamma\bslash\Xi\symmunram}
	\sgn_{\ff_\alpha^1}(\alpha(\gamma)) \\
& {}= (-1)^{\card{\Gamma\bslash\Xi\symm}}\dotm
\varepsilon\nosymm(\pi', \gamma)\dotm
\varepsilon\symmunram(\pi', \gamma) \\
& {}= (-1)^{\card{\Gamma\bslash\Xi\symm}}\dotm
\varepsilon\noram(\pi', \gamma),
\end{aligned}
\end{equation}
where the notation on the left is that of
\xcite{adler-spice:explicit-chars}*
	{Proposition \xref{prop:theta-tilde-phi}},
and that on the right is that of
Definition \ref{defn:signs}
(and we have used again that
\(\alpha(\gamma) \in 1 + \pp_\alpha\)
for \(\alpha \in \Upsilon\)).

Now, since
\(\Root(\pi', \gamma)\symmram = \Upsilon\symmram\),
\xcite{adler-spice:explicit-chars}*
	{Proposition \xref{prop:gauss-sum}}
says precisely that
\begin{equation}
\tag{$\ddag_{> 0}$}
\label{eq:deep-sign}
\mf G(\phi, \gamma)
= (-1)^{\card{\Gamma\bslash\Upsilon\symm}}
\varepsilon\symmram(\pi', \gamma),
\end{equation}
where the notation on the left is that of
\xcite{adler-spice:explicit-chars}*
	{Proposition \xref{prop:gauss-sum}},
and that on the right is that of
Definition \ref{defn:signs}.

The desired equality now follows from
\eqref{eq:shallow-sign} and \eqref{eq:deep-sign}.
\end{proof}

\subsection{An indexing set}
\label{sec:index}

In this section and \S\ref{sec:orbital} below,
we change notation, putting
\(\bH = C_\bG(\gamma_{< r})\)
	(not \(\bH = \CC\bG r(\gamma) = C_\bG(\gamma_{< r})\conn\))
and
\(\bH' = C_{\bG'}(\gamma_{< r})\).
The reasoning of \xcite{adler-spice:good-expansions}*
	{Proposition \xref{prop:unique-approx}}
guarantees only that
\(\bH\conn\), not \bH itself, is independent of the choice
of approximation \(\gamma = \gamma_{< r}\gamma_{\ge r}\);
but we soon reduce all relevant statements about \bH
to statements about \(\bH\conn\),
so it doesn't matter.
%By \xcite{adler-spice:good-expansions}*
%	{Lemma \xref{lem:funny-centralizer-descends}},
%\(\bH^{\prime\,\circ}\) equals \(\bG' \cap \bH\conn\),
%hence is also independent of the choice of approximation.

The following set occurs implicitly in
\xcite{adler-spice:explicit-chars}*
	{Theorem \xref{thm:char-tau|pi-1}},
where it is the indexing set for a sum used in the
computation of the character of \(\YuUp_{G'}^G \pi'\).

\begin{defn}
\label{defn:index}
Put
\(
\mterm{\mc C(\pi', \gamma)}
\ldef \sett
	{g \in G}
	{\(\Int(g)\gamma_{< r} \in G'\)
		and \(\Int(g)\gamma_{\ge r} \in G_{x, r}\)}
\).
\end{defn}

Write \mc C for \(\mc C(\pi', \gamma)\).
Lemma \ref{lem:rigidity} below is a technical result on the
indexing set, proven in \cite{adler-spice:good-expansions};
we are mostly interested in its consequences,
Corollaries \ref{cor:rigid-sum}
and \ref{cor:rigid-intersect}.

\begin{lem}[\xcite{adler-spice:good-expansions}*
	{Lemma \xref{lem:rigidity}}]
\label{lem:rigidity}
\(\mc C \cap G_{x, {0+}}
= G'_{x, {0+}}H_{x, {0+}}\).
\end{lem}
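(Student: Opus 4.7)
The plan is to prove the two containments separately. The containment $G'_{x,{0+}}H_{x,{0+}} \subseteq \mc C \cap G_{x,{0+}}$ is immediate: any $h \in H_{x,{0+}} \subseteq H = C_\bG(\gamma_{<r})$ centralizes $\gamma_{<r}$, so $\Int(g'h)\gamma_{<r} = \Int(g')\gamma_{<r} \in G'$ for any $g' \in G'_{x,{0+}}$; and both $G'_{x,{0+}}$ and $H_{x,{0+}}$ lie in $G_{x,{0+}}$, which normalizes $G_{x,r} \ni \gamma_{\ge r}$, so $\Int(g'h)\gamma_{\ge r} \in G_{x,r}$.

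For the reverse containment, my plan is a successive-approximation argument along the Moy--Prasad filtration. Given $g \in \mc C \cap G_{x,{0+}}$, I would inductively build sequences $g'_n \in G'_{x,{0+}}$, $h_n \in H_{x,{0+}}$, and $k_n \in G_{x,s_n}$ with $g = g'_n h_n k_n$ and filtration indices $s_n \nearrow \infty$; completeness of the filtration then produces the required factorization $g = g'h$ as a limit, with $g' = \lim g'_n \in G'_{x,{0+}}$ and $h = \lim h_n \in H_{x,{0+}}$.

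The heart of the argument is the inductive step: if $g \in \mc C \cap G_{x,s}$ for some $s > 0$, show that modulo a suitably deep filtration subgroup the element $g$ lies in $G'_{x,s}H_{x,s}$. Via the Moy--Prasad isomorphism $G_{x,s:s+} \cong \fg_{x,s:s+}$, transport the problem to the Lie algebra and expand the image $\overline Y$ of $g$ along $\bT$-weights as $\overline Y = \overline Y_0 + \sum_\alpha \overline Y_\alpha$. Then use the commutator linearization $[g,\gamma_{<r}] \leftrightarrow (1 - \Ad(\gamma_{<r}))\overline Y$: the hypothesis $\Int(g)\gamma_{<r} \in G'$ combined with $g \in G_{x,s}$ forces $[g,\gamma_{<r}] \in G' \cap G_{x,s} = G'_{x,s}$, which reads weight-by-weight as $(1 - \alpha(\gamma_{<r}))\overline Y_\alpha \in \fg'_\alpha$. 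For $\alpha \in \Root_\bH$ (where $\alpha(\gamma_{<r}) = 1$) no constraint appears and $\overline Y_\alpha$ contributes to $H_{x,s}$; for $\alpha \in \Root' \setminus \Root_\bH$ the component already lives in $\fg'_\alpha$ and contributes to $G'_{x,s}$; for $\alpha \in \Root \setminus (\Root' \cup \Root_\bH)$, one must deduce $\overline Y_\alpha = 0$.

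The main obstacle is precisely the last case: when $\ord_\gamma \alpha > 0$ (i.e., $\alpha(\gamma_{<r})$ is nontrivially close to $1$), multiplication by $1 - \alpha(\gamma_{<r})$ fails to be injective on the quotient $\fg_{\alpha, x, s:s+}$, so the single-step linearization imposes no condition on $\overline Y_\alpha$. My plan for overcoming this is to work modulo a quotient strictly deeper than $\fg_{x,s+}$—specifically, deep enough to absorb the shifts by $\ord_\gamma \alpha$—so that the cancellation becomes meaningful and $\overline Y_\alpha$ can be solved for. Because there are only finitely many $\Gamma$-orbits of roots and the shifts $\ord_\gamma \alpha$ are bounded, one balances these depths across all root orbits and iterates the refined step to force the vanishing of every $\overline Y_\alpha$ with $\alpha \in \Root \setminus (\Root' \cup \Root_\bH)$; the remaining components then decompose cleanly into $\fh$- and $\fg'$-parts, completing the inductive step. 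The full bookkeeping is carried out in \cite{adler-spice:good-expansions}.
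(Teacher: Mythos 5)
The paper does not prove this lemma: it is stated with the attribution ``\xcite{adler-spice:good-expansions}*{Lemma \xref{lem:rigidity}}'' and the surrounding text explicitly says it is ``proven in \cite{adler-spice:good-expansions}.'' So there is no in-paper proof to compare your sketch against; I can only assess the sketch on its own merits.

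Your easy containment is fine (implicitly using, as the surrounding text does, that $1 \in \mc C$, i.e.\ that $\gamma_{<r} \in G'$ and $\gamma_{\ge r} \in G_{x,r}$), and your general framework for the reverse containment --- successive approximation along the Moy--Prasad filtration, commutator linearisation via the isomorphism $G_{x,s:s+} \cong \fg_{x,s:s+}$, and decomposition by $\bT$-weights --- is the right shape. But the key step, handling $\alpha \in \Root \setminus (\Root' \cup \Root_\bH)$ with $0 < \ord_\gamma \alpha < r$, has a genuine gap that the phrase ``work modulo a quotient strictly deeper than $\fg_{x,s+}$'' does not close. The linearised commutator $[g, \gamma_{<r}]$ agrees with $(1 - \Ad(\gamma_{<r}))\overline Y$ only modulo error terms of \emph{quadratic} order in $Y$, i.e.\ at depth $\ge 2s$; and these errors do have components in every root direction (via brackets $[\fg_\beta, \fg_{\alpha-\beta}] \subseteq \fg_\alpha$). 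Meanwhile the first-order $\alpha$-component $(1 - \alpha(\gamma_{<r}))\overline Y_\alpha$ sits at depth $s + \ord_\gamma \alpha$. When $\ord_\gamma \alpha \ge s$ this is at depth $\ge 2s$ as well, so the quadratic noise is exactly as large as the signal you want to read off, and ``going deeper'' cannot by itself force $\overline Y_\alpha = 0$ at level $s$. Since your induction starts at $s = 0+$, every $\alpha$ with $\ord_\gamma\alpha > 0$ is problematic from the outset, and these are precisely the cases you need. Closing this requires a more delicate bootstrap --- for instance, one exploiting the normal-approximation structure $\gamma_{<r} = \prod_{0 \le i < r} \gamma_i$ with each $\gamma_i$ good of depth $i$, so that the commutator against $\gamma_i$ isolates the shift by $\ord_\gamma\alpha = i$ at the right scale --- rather than a single linearisation against all of $\gamma_{<r}$. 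Your deferral of ``the full bookkeeping'' to \cite{adler-spice:good-expansions} is in effect deferring precisely this central difficulty.
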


\begin{cor}
\label{cor:rigid-sum}
The natural projection
\[
\anonmap
	{\stab_{G'}(\ox)\bslash\mc C/H}
	{\stab_{G'}(\ox)G_{x, {0+}}\bslash
		G_{x, {0+}}\mc C/H}
\]
is a bijection.
\end{cor}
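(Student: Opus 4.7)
The plan is to verify well-definedness and surjectivity by inspection, then handle injectivity by applying Lemma~\ref{lem:rigidity} to a conjugated $r$-approximation. First, well-definedness is clear from the inclusions $\stab_{G'}(\ox) \subseteq \stab_{G'}(\ox) G_{x, 0+}$ and $\mc C \subseteq G_{x, 0+}\mc C$, and surjectivity is immediate because any $k g \in G_{x, 0+}\mc C$ (with $k \in G_{x, 0+}$ and $g \in \mc C$) visibly lies in the same $(\stab_{G'}(\ox) G_{x, 0+}, H)$-double coset as $g$.

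For injectivity, I would take $g_1, g_2 \in \mc C$ satisfying a relation $g_1 = s k g_2 h$ with $s \in \stab_{G'}(\ox)$, $k \in G_{x, 0+}$, and $h \in H$, and reduce the problem to showing $k \in \stab_{G'}(\ox) \cdot \Int(g_2)(H)$. The key idea is to pass to the rotated $r$-approximation $\tilde\gamma \ldef \Int(g_2)\gamma$ (with natural components $\tilde\gamma_{<r} \ldef \Int(g_2)\gamma_{<r}$ and $\tilde\gamma_{\ge r} \ldef \Int(g_2)\gamma_{\ge r}$), its corresponding index set $\widetilde{\mc C}$, and the centraliser $\tilde H \ldef C_G(\tilde\gamma_{<r}) = \Int(g_2)(H)$; the goal then is to place $k$ in $\widetilde{\mc C} \cap G_{x, 0+}$ and invoke Lemma~\ref{lem:rigidity} for $\tilde\gamma$.

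To verify $k \in \widetilde{\mc C}$, I would check the two defining conditions in turn. The first, $\Int(k)\tilde\gamma_{<r} \in G'$, will follow from the chain $\Int(kg_2)\gamma_{<r} = \Int(s^{-1}g_1 h^{-1})\gamma_{<r} = \Int(s^{-1})\Int(g_1)\gamma_{<r} \in G'$, using that $h$ centralises $\gamma_{<r}$, that $g_1 \in \mc C$, and that $s \in G'$. The second, $\Int(k)\tilde\gamma_{\ge r} \in G_{x, r}$, is more direct: $\Int(k)\tilde\gamma_{\ge r} = \Int(k)\Int(g_2)\gamma_{\ge r}$, where $\Int(g_2)\gamma_{\ge r} \in G_{x, r}$ because $g_2 \in \mc C$, and $k \in G_{x, 0+}$ normalises $G_{x, r}$.

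Applying Lemma~\ref{lem:rigidity} to the rotated data will then give $k \in G'_{x, 0+}\tilde H_{x, 0+} \subseteq \stab_{G'}(\ox) \cdot \Int(g_2)(H)$; writing $k = u \cdot g_2 h_0 g_2^{-1}$ with $u \in G'_{x, 0+} \subseteq \stab_{G'}(\ox)$ and $h_0 \in H$, I would conclude $g_1 = s k g_2 h = (s u) g_2 (h_0 h) \in \stab_{G'}(\ox) g_2 H$, finishing injectivity. The main point requiring explicit justification is the appeal to Lemma~\ref{lem:rigidity} with $\gamma$ replaced by $\tilde\gamma$; I expect this to be routine, since the hypotheses of the lemma depend only on the $r$-approximability of the chosen element (a property preserved under conjugation) and on the centraliser $\tilde H$ of $\tilde\gamma_{<r}$, but this rotation of the setup should be flagged clearly.
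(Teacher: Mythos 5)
Your proof is correct and takes essentially the same approach as the paper's: both reduce injectivity to Lemma~\ref{lem:rigidity} applied after conjugation by an element of $\mc C$ (the paper fixes $g$, replaces $\gamma$ by $\Int(g)\gamma$, and computes the fibre over $1$; you keep $g_1, g_2$ and conjugate by $g_2$, which amounts to the same thing). The one cosmetic difference is that you verify both defining conditions of $\widetilde{\mc C}$, whereas the paper only needs (and only checks) membership in $\mc C(\pi', \gamma_{<r})$, for which the depth-$\ge r$ condition is automatic.
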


\begin{proof}
Let us call the map \(i_\gamma\).
It is clear that it is a surjection.

Fix \(g \in \mc C\).
Replacing \(\gamma\) by \(\Int(g)\gamma\)
(hence \(\gamma_{< r}\) by \(\Int(g)\gamma_{< r}\)
and \(\gamma_{\ge r}\) by \(\Int(g)\gamma_{\ge r}\))
replaces
	\bH by \(\Int(g)\bH\),
	\(\mc C\) by \(\mc C g\inv\),
and	\(i_\gamma\) by \(c g\inv \mapsto i_\gamma(c)g\inv\)%
%\(i_{\Int(g)\gamma}(c g\inv) = i_\gamma(c)g\inv\)
%for \(c \in \mc C\)
;
so it suffices to compute the fibre over \(g = 1\).
If \(c \in \mc C\) is such that
\(\stab_{G'}(\ox)G_{x, {0+}}c H
= i_\gamma(c) = i_\gamma(1)
= \stab_{G'}(\ox)G_{x, {0+}}H\), then,
since \(G_{x, {0+}}\) is normalised by
\(\stab_{G'}(\ox)\), there exists
\(k \in G_{x, {0+}} \in \stab_{G'}(\ox)c H\).
Such an element \(k\) lies in
\(\mc C(\pi', \gamma_{< r})\)
(not necessarily \(\mc C(\pi', \gamma)\)).
By Lemma \ref{lem:rigidity}, it follows that
\(k \in G'_{x, {0+}}H_{x, {0+}} \subseteq \stab_{G'}(\ox)H\),
so that
\(\stab_{G'}(\ox)k H = \stab_{G'}(\ox)c H\)
is the trivial double coset.
\end{proof}

\begin{cor}
\label{cor:rigid-intersect}
If \(\gamma_{< r} \in G'\)% and \(\gamma_{\ge r} \in G_{x, r}\)
,
then
\begin{multline*}
\stab_{G'}(\ox)G_{x, {0+}} \cap H
= \stab_{H'}(\ox)H_{x, {0+}} \\
\andq
\stab_{G'}(\ox)G_{x, {0+}} \cap H\conn
= \stab_{H^{\prime\,\circ}}(\ox)H_{x, {0+}}.
\end{multline*}
%so that the natural map
%\[
%\anonmap{
%	\stab_{H'}(\ox)/
%		\stab_{H^{\prime\,\circ}}(\ox)
%}{
%	\bigl(\stab_{G'}(\ox)G_{x, {0+}} \cap H\bigr)/
%		\bigl(\stab_{G'}(\ox)G_{x, {0+}} \cap H\conn\bigr)
%}
%\]
%is an isomorphism.
\end{cor}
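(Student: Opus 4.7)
My plan is to reduce both equalities to Lemma~\ref{lem:rigidity}, but applied to the trivial \(r\)-approximation \(\gamma_{< r} = \gamma_{< r}\dotm 1\) in place of \(\gamma = \gamma_{< r}\gamma_{\ge r}\); note that in this case \(\mc C(\pi', \gamma_{< r})\) collapses to \(\set{g \in G}{\Int(g)\gamma_{< r} \in G'}\), and the group \(\bH = C_\bG(\gamma_{< r})\) is unchanged.

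The inclusions \(\supseteq\) are essentially immediate: \(\stab_{H'}(\ox) \subseteq \stab_{G'}(\ox)\) and \(H_{x, {0+}} \subseteq G_{x, {0+}}\), both factors lie in \(H\); and for the second equality one observes additionally that \(H_{x, {0+}}\) is pro-unipotent and so lies in \(H\conn\).

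For the forward inclusion in the first equality I would start with \(g = g'k \in \stab_{G'}(\ox)G_{x, {0+}}\) lying in \(H\). The hypothesis \(g \in C_\bG(\gamma_{< r})\) rearranges to
\[
\Int(k)\gamma_{< r} = \Int(g'^{-1})\gamma_{< r},
\]
which lies in \(G'\) because both \(g'\) and \(\gamma_{< r}\) do; hence \(k \in \mc C(\pi', \gamma_{< r}) \cap G_{x, {0+}}\). Lemma~\ref{lem:rigidity} then factors \(k = g'_0 h\) with \(g'_0 \in G'_{x, {0+}}\) and \(h \in H_{x, {0+}}\). Writing \(g = (g'g'_0)h\), the middle factor \(g'g'_0\) lies in \(\stab_{G'}(\ox)\); and since \(g\) and \(h\) both lie in \(H\), we get \(g'g'_0 = gh^{-1} \in H \cap G' = H'\). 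Therefore \(g'g'_0 \in \stab_{G'}(\ox) \cap H' = \stab_{H'}(\ox)\), as desired.

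The connected statement follows by the same decomposition, with one extra observation: since \(h \in H_{x, {0+}} \subseteq H\conn\), if we additionally assume \(g \in H\conn\), then \(g'g'_0 = gh^{-1} \in H\conn\), and so \(g'g'_0 \in H\conn \cap G' = H^{\prime\,\circ}\), using Remark~\ref{rem:funny-centralizer-descends} to identify this intersection. The main place to be careful---and what I would flag as the chief obstacle to reading, if not to proving---is the notational shift at the start of \S\ref{sec:index}: the new \(\bH\) and \(\bH'\) are the possibly disconnected centralizers, while Lemma~\ref{lem:rigidity} and Remark~\ref{rem:funny-centralizer-descends} refer to their identity components. Verifying that \(H_{x, {0+}}\), \(\mc C(\pi', \gamma_{< r})\), and the identifications \(H \cap G' = H'\) and \(H\conn \cap G' = H^{\prime\,\circ}\) are interpreted consistently across these conventions is a matter of bookkeeping rather than substance, but it is where an error could most easily slip in.
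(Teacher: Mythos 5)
Your proposal is correct and follows essentially the same route as the paper's proof: both write the element of \(\stab_{G'}(\ox)G_{x,{0+}} \cap H\) as \(g'k\), observe that \(k \in \mc C(\pi', \gamma_{< r}) \cap G_{x, {0+}}\) (with \(\gamma_{< r}\) carrying the trivial \(r\)-approximation), invoke Lemma~\ref{lem:rigidity}, and then rearrange. The only cosmetic difference is that for the connected statement the paper simply intersects both sides of the first equality with \(H\conn\), whereas you re-run the factorisation; and your closing paragraph correctly identifies the one place where bookkeeping matters (the shift in the meaning of \(\bH\), \(\bH'\) between \S\ref{sec:char-notn} and \S\ref{sec:index}).
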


\begin{proof}
%Note that the map of the problem statement is an injection,
%since \(H' \cap H\conn = H^{\prime\,\circ}\).
%We wait until the end of the proof to show that it is
%also a surjection.
%
Suppose that \(h \in \stab_{G'}(\ox)G_{x, {0+}} \cap H\),
and write \(h = g'k\),
where \(g' \in \stab_{G'}(\ox)\)
and \(k \in G_{x, {0+}}\).
As in the proof of Corollary \ref{cor:rigid-sum},
we have that
\(k \in \stab_{G'}(\ox)H \subseteq
	\mc C(\pi', \gamma_{< r}) \cap \),
hence by Lemma \ref{lem:rigidity} that
\(k \in G'_{x, {0+}}H_{x, {0+}}\).
It follows that
\[
h = g'k
	\in H \cap \stab_{G'}(\ox)H_{x, {0+}}
	= \bigl(H \cap \stab_{G'}(\ox)\bigr)H_{x, {0+}}
	= \stab_{H'}(\ox)H_{x, {0+}},
\]
whence the first containment.

For the second containment, observe that,
since \(H_{x, {0+}} \subseteq H\conn\)
and \(H' \cap H\conn = G' \cap H\conn = H^{\prime\,\circ}\)
(see Remark \ref{rem:funny-centralizer-descends}),
we have
\begin{multline*}
H\conn \cap \stab_{G'}(\ox)G_{x, {0+}}
= H\conn \cap \stab_{H'}(\ox)H_{x, {0+}} \\
= \bigl(H\conn \cap \stab_{H'}(\ox)\bigr)H_{x, {0+}}
= \stab_{H^{\prime\,\circ}}(\ox)H_{x, {0+}}.\qedhere
\end{multline*}
%In particular,
%\(\stab_{H'}(\ox)H_{x, {0+}}\dotm
%	\bigl(\stab_{G'}(\ox)G_{x, {0+}} \cap H\conn\bigr)
%= \stab_{G'}(\ox)G_{x, {0+}} \cap H\),
%so the map from the statement is a surjection.
\end{proof}

\subsection{Orbital integrals}
\label{sec:orbital}

Preserve the notation of \S\ref{sec:index}.
In particular, \bH and \(\bH'\) are the possibly
\emph{disconnected} groups
\(C_\bG(\gamma_{< r})\) and \(C_{\bG'}(\gamma_{< r})\),
respectively.
%% This seems to be a result about a random, possibly disconnected,
%% group; but actually the whole point is to compare two Haar
%% measures, one of which is defined in terms of the ambient group
%% \(G\).  It thus \emph{must} be stated here.
Proposition \ref{prop:orbit} relates orbital
integrals on \(H\) and \(H\conn\); but note that, as in
Proposition \ref{prop:root}, in addition to changing the
domain of integration, we are also changing the
normalisation of the measure with respect to which it is
computed.

\begin{prop}
\label{prop:orbit}
We have that
\begin{align*}
\hat\mu^{\stab_H(\ox)}_{X^*}
={}	&
\indx{\stab_{H'}(\ox)}{\stab_{H^{\prime\,\circ}}(\ox)}\inv
\card{(\fh', \fh)_{x, (0, 0):(0, {0+})}}^{1/2}\times{} \\
& \qquad\sum_{h \in \stab_H(\ox)/\stab_{H\conn}(\ox)}
	\hat\mu^{\stab_{H\conn}(\ox)}_{\Ad^*(h)X^*}
\end{align*}
and
\begin{align*}
\hat\mu^H_{X^*}
={}	&
\indx{H'}{H^{\prime\,\circ}}\inv
\card{(\fh', \fh)_{x, (0, 0):(0, {0+})}}^{1/2}\times{} \\
& \qquad\sum_{h \in H/H\conn}
	\hat\mu^{H\conn}_{\Ad^*(h)X^*},
\end{align*}
as functions on \(\fh\rss\),
where the orbital integrals on the left are normalised
as in \xcite{adler-spice:explicit-chars}*
	{Corollary \xref{cor:char-tau|pi-1}},
and those on the right as in
Definition \ref{defn:normal-harm}.
\end{prop}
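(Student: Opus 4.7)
The plan is to split the argument into two stages: (i) an orbit-decomposition identity expressing an orbital integral on a disconnected group as a sum of orbital integrals on its identity component at translated basepoints; and (ii) a measure-theoretic calibration between the AS-EC normalisation used on the left-hand side and Waldspurger's canonical normalisation used on the right-hand side. The two statements of the proposition are structurally identical, differing only in whether the ambient group is $\stab_H(\ox)$ or $H$; so I focus on the first, and derive the second by running the identical argument with $H$ and $H\conn$ in place of $\stab_H(\ox)$ and $\stab_{H\conn}(\ox)$.

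For stage (i), I would first use $\bH' = \bH \cap \bG' = C_\bH(X^*)$ (the first equality by Remark \ref{rem:funny-centralizer-descends}, the second by the genericity of $X^*$ in $\fg^*$) to identify the identity component of $C_{\stab_H(\ox)}(X^*)$ as $\stab_{H^{\prime\,\circ}}(\ox)$. A choice of coset representatives $h$ for $\stab_H(\ox)/\stab_{H\conn}(\ox)$ then decomposes
\[
\stab_H(\ox)/\stab_{H^{\prime\,\circ}}(\ox)
= \bigsqcup_{h \in \stab_H(\ox)/\stab_{H\conn}(\ox)}
\stab_{H\conn}(\ox)/h\stab_{H^{\prime\,\circ}}(\ox)h\inv,
\]
where we use that $h$ fixes $\ox$ and hence normalises $\stab_{H\conn}(\ox)$, and that $h\stab_{H^{\prime\,\circ}}(\ox)h\inv$ is the identity component of the stabiliser of $\Ad^*(h)X^*$ in $\stab_{H\conn}(\ox)$. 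Substituting into the defining integral for $\hat\mu^{\stab_H(\ox)}_{X^*}$ yields formally
\[
\hat\mu^{\stab_H(\ox)}_{X^*}
= c\dotm\sum_{h \in \stab_H(\ox)/\stab_{H\conn}(\ox)}
\hat\mu^{\stab_{H\conn}(\ox)}_{\Ad^*(h)X^*}
\]
for a constant $c$ depending only on the ratio of the two measure normalisations.

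For stage (ii), I would compute $c$ explicitly. The AS-EC normalisation in \xcite{adler-spice:explicit-chars}*{Corollary \xref{cor:char-tau|pi-1}} is calibrated by assigning a specific mass to the full (disconnected) stabiliser $\stab_{H'}(\ox)$; Waldspurger's canonical measure (recalled in Definition \ref{defn:normal-harm}) assigns $\stab_{H\conn}(\ox)$ and $\stab_{H^{\prime\,\circ}}(\ox)$ the respective masses $\card{H\conn_{x, 0:{0+}}}\card{\fh_{x, 0:{0+}}}^{-1/2}$ and $\card{H^{\prime\,\circ}_{x, 0:{0+}}}\card{\fh'_{x, 0:{0+}}}^{-1/2}$, whose quotient furnishes the integration measure on $\stab_{H\conn}(\ox)/\stab_{H^{\prime\,\circ}}(\ox)$. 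Two discrepancies then account for $c$: the component-group index $\indx{\stab_{H'}(\ox)}{\stab_{H^{\prime\,\circ}}(\ox)}\inv$, reflecting the passage between integrating modulo the full stabiliser (AS-EC) and modulo its identity component (Waldspurger's quotient); and a Lie-algebra factor arising from Waldspurger's square-root convention, which, after unwinding \xcite{adler-spice:good-expansions}*{Definition \xref{defn:vGvr}} to identify
\[
\card{(\fh', \fh)_{x, (0, 0):(0, {0+})}}
= \card{\fh_{x, 0:{0+}}}\card{\fh'_{x, 0:{0+}}}\inv,
\]
produces exactly the stated $|\cdot|^{1/2}$.

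The main obstacle will be the careful bookkeeping in stage (ii): in particular, verifying that the parahoric-level group ratio $\card{H\conn_{x, 0:{0+}}}/\card{H^{\prime\,\circ}_{x, 0:{0+}}}$ (coming from the numerators of the Waldspurger masses) cancels cleanly against the AS-EC convention, so that only the component-group and Lie-algebra factors of the statement remain. Once this is done, the analogous identity for $\hat\mu^H_{X^*}$ requires no further comparison, since the Waldspurger measures on $H$ and $H\conn$ restrict compatibly to those on the parahoric stabilisers used in the first formula.
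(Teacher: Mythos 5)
Your overall strategy — decompose the orbital integral on a disconnected group into a sum over its component group, then calibrate the AS--EC normalisation against Waldspurger's — is exactly the paper's. The paper carries out your stage~(i) not by a bare-hands coset decomposition but by citing \xcite{adler-spice:explicit-chars}*{Lemma \xref{lem:disc-orbital-int}}, which delivers the sum and the component-group factor $\indx{H'}{H^{\prime\,\circ}}\inv$ in one stroke; that is a stylistic rather than substantive difference.

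The genuine gaps are in your stage~(ii). First, you propose to apply Waldspurger's mass formula directly to $\stab_{H\conn}(\ox)$ and $\stab_{H^{\prime\,\circ}}(\ox)$, writing their masses as $\card{H\conn_{x,0:{0+}}}\card{\fh_{x,0:{0+}}}^{-1/2}$ and $\card{H^{\prime\,\circ}_{x,0:{0+}}}\card{\fh'_{x,0:{0+}}}^{-1/2}$. But the cited mass formula is for \emph{parahoric} subgroups, and $\stab_{H\conn}(\ox)$ need not be a parahoric (it can be larger, being the full stabiliser of a point of the reduced building). The paper sidesteps this by working with the open subgroup $H_{x,{0+}}$, whose canonical mass is the clean quantity $\card{\fh_{x,0:{0+}}}^{-1/2}$ (since $H_{x,0}$ is parahoric of mass $\card{H_{x,0:{0+}}}\card{\fh_{x,0:{0+}}}^{-1/2}$, and $H_{x,{0+}}$ has index $\card{H_{x,0:{0+}}}$), and similarly $\card{\fh'_{x,0:{0+}}}^{-1/2}$ for $H'_{x,{0+}}$. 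Second, you leave vague what set the AS--EC measure is calibrated on; the paper identifies it as the image of $H_{x,{0+}}$ in $H/H'$, and this identification depends crucially on Corollary \ref{cor:rigid-intersect}, which shows $\bigl(H \cap \stab_{G'}(\ox)G_{x,{0+}}\bigr)H'/H' = H_{x,{0+}}H'/H'$. Without that corollary the two normalisations cannot actually be compared on a common set, and the hoped-for cancellation of the group-level factors $\card{H\conn_{x,0:{0+}}}/\card{H^{\prime\,\circ}_{x,0:{0+}}}$ remains unestablished. You should replace the stabilisers in your calibration by the pro-unipotent radicals $H_{x,{0+}}$ and $H'_{x,{0+}}$, and supply the rigidity step via Corollary \ref{cor:rigid-intersect}; with those corrections your argument closes.
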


See \xcite{adler-spice:explicit-chars}*
	{\xeqref{eq:mu-stab-ox}}
for the definition of
\(\hat\mu^{\stab_H(\ox)}_{X^*}\).
We do not reproduce it here, since we are really interested
only in \(\hat\mu^H_{X^*}\).

\begin{proof}
Since the arguments are essentially identical, we discuss
only the integral over \(H\).

As observed after the proof of
\xcite{adler-spice:explicit-chars}*
	{Theorem \xref{thm:char-tau|pi-1}},
we may use \xcite{adler-spice:explicit-chars}*
	{Lemma \xref{lem:disc-orbital-int}}
to re-write the Fourier transform of an orbital integral
with respect to \(H\),
which is the group of \field-rational points of a possibly
disconnected group,
as a sum of Fourier transforms of orbital integrals with
respect to \(H\conn\),
which is the group of \field-rational points of a connected
group.

Since \(C_H(X^*) = H'\), we have by
\xcite{adler-spice:explicit-chars}*
	{Lemma \xref{lem:disc-orbital-int}}
that
\[
\hat\mu^H_{X^*}
= \indx{H'}{H^{\prime\,\circ}}\inv
\sum_{h \in H/H\conn}
	\hat\mu^{H\conn}_{\Ad^*(h)\inv X^*}.
\]

Note, however, that the right-hand side is still normalised
with respect to the measure on
\(H\conn/H^{\prime\,\circ}\)
inherited from its embedding in
\(H/H'\).
By Corollary \ref{cor:rigid-intersect}
(and \xcite{adler-spice:explicit-chars}*
	{Corollary \xref{cor:char-tau|pi-1}
	and \S\xref{sec:JK}}),
this measure assigns mass \(1\) to
\[
\bigl(H \cap \stab_{G'}(\ox)G_{x, {0+}}\bigr)H'/H'
= H_{x, {0+}}H'/H'
\]
---\ie, to the image in \(H/H'\) of
\(H_{x, {0+}} \subseteq H\conn\),
which lies in (the image in \(H/H'\) of)
\(H\conn/H^{\prime\,\circ}\).

On the other hand, since
\(H^{\prime\,\circ} \cap H_{x, {0+}} = H'_{x, {0+}}\)
by \xcite{adler-spice:good-expansions}*
	{Lemma \xref{lem:funny-centralizer-descends}
	and Corollary \xref{cor:compatibly-filtered-tame-rank}},
we have that the quotient of the canonical measures
on \(H\conn\) and \(H^{\prime\,\circ}\)
(see Definition \ref{defn:normal-harm})
assigns to the image in \(H\conn/H^{\prime\,\circ}\)
of \(H_{x, {0+}}\)
the quotient of the canonical measure
\[
\indx{H_{x, 0}}{H_{x, {0+}}}\inv\dotm
	\bigl(
		\card{H_{x, 0:{0+}}}\dotm\card{\fh_{x, 0:{0+}}}^{-1/2}
	\bigr)
= \card{\fh_{x, 0:{0+}}}^{-1/2}
\]
of \(H_{x, {0+}}\)
by the canonical measure \(\card{\fh'_{x, 0:{0+}}}^{-1/2}\)
of \(H'_{x, {0+}}\).
The result follows.
\end{proof}

\subsection{An inductive formula for normalised characters}
\label{sec:char-char}

As preparation for our character computation
Theorem \ref{thm:ratl}, which is the main result of this
section, we present the complicated-looking
Lemma \ref{lem:char-summand},
which just assembles all of our previous results on the
comparison between the notations
of \cite{adler-spice:explicit-chars}
and of this paper.
As with Proposition \ref{prop:root}, it is important to keep
in mind that there are two different systems of notation in
play.

Recall that we have required that our expanded cuspidal
quintuple \((G', \pi', \phi, x, X^*)\) is compact
(\ie, that the group \(\bG'/Z(\bG)\) is \field-anisotropic),
so that the character \(\Theta_{\pi'}\),
hence also the function \(\Phi_{\pi'}\) of
Definition \ref{defn:normal-harm},
is defined on all of \(G'\).

\begin{lem}
\label{lem:char-summand}
If \(\gamma_{< r} \in G'\)
and \(Y \in \Lie(\CC G r(\gamma))\),
then we have that
\begin{multline*}
\underbrace{\varepsilon(\phi, \gamma)
\mf G(\phi, \gamma)}
	\textsub{(I)}\times{} \\
\bigindx
	{\odc{\gamma; x, r}}
	{\odc{\gamma; x, r}_{G'}G_{x, r/2}}^{1/2}
\bigindx
	{\odc{\gamma_{< r}; x, {r+}}}
	{\odc{\gamma_{< r}; x, {r+}}_{G'}G_{x, {(r/2)+}}}^{1/2}\times{} \\
\underbrace{\Theta_{\pi'}(\gamma_{< r})}
	\textsub{(II)}
\underbrace{\hat\mu^{\CC G r(\gamma)}_{X^*}(Y)}
	\textsub{(III)}
\end{multline*}
(in the notation of
\xcite{adler-spice:explicit-chars}*
	{Propositions \xref{prop:theta-tilde-phi}
and \xref{prop:gauss-sum}}
and \xcite{adler-spice:good-expansions}*
	{Definition \xref{defn:bracket}},
and with the normalisation of measure of
\xcite{adler-spice:explicit-chars}*
	{Corollary \xref{cor:char-tau|pi-1}})
equals
\begin{multline*}
\abs{\redD_G(\gamma)}^{-1/2}
\smabs{\redD_{\CC G r(\gamma)}(Y)}^{-1/2}
\times{}\\
\underbrace{\indx{C_{G'}(\gamma_{< r})}{\CC{G'} r(\gamma)}\inv}
	\textsub{(\(\textup{III}'_1\))}
\underbrace{\varepsilon\symmram(\pi', \gamma)
	\varepsilon\noram(\pi', \gamma)}
	\textsub{(\(\textup I'_1\))}
\underbrace{\Phi_{\pi'}(\gamma)}
	\textsub{(\(\textup{II}'\))}
\times{} \\
\underbrace{\tilde e(\pi', \gamma)}
	\textsub{(\(\textup I'_2\))}
\underbrace{\sum_{h \in C_G(\gamma_{< r})/\CC G r(\gamma)}
	\hat O^{\CC G r(\gamma)}_{\Ad^*(h)\inv X^*}(Y)}
	\textsub{(\(\textup{III}'_2\))}
\end{multline*}
(in the notation of
Definitions
	\ref{defn:disc},
	\ref{defn:normal-harm},
	and
	\ref{defn:signs}).
\end{lem}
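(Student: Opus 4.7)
The plan is to verify the equality factor by factor, matching each labeled piece on the left with its primed counterpart on the right while tracking the discriminant and cardinality factors that arise when passing between the measure normalizations of \cite{adler-spice:explicit-chars} and Definition \ref{defn:normal-harm}. Proposition \ref{prop:root} handles $(\textup I) = (\textup I'_1)(\textup I'_2)$ outright. Taking square roots in Corollary \ref{cor:const}, the product of the two half-power indices on the left becomes
\[
\card{\fh_{x,0:0+}}^{-1/2}\card{\fh'_{x,0:0+}}^{1/2}\smabs{\redD_H(X^*)}^{1/2}\smabs{\redD_G(\gamma)}^{-1/2}\smabs{\redD_{G'}(\gamma)}^{1/2},
\]
which supplies the $\smabs{\redD_G(\gamma)}^{-1/2}$ on the right together with correction factors that must cancel against contributions from (II) and (III).

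For (II), the definition $\Phi_{\pi'} = \smabs{\redD_{G'}}^{1/2}\Theta_{\pi'}$ from Definition \ref{defn:normal-harm}, combined with Lemma \ref{lem:part-disc} applied to $G'$ to split $\redD_{G'}(\gamma)$ into its $\gamma_{<r}$- and $\gamma_{\ge r}$-pieces, rewrites $\Theta_{\pi'}(\gamma_{<r})$ as $\smabs{\redD_{G'}(\gamma)}^{-1/2}\Phi_{\pi'}(\gamma)$ once the $H'$-discriminant contribution of $\gamma_{\ge r}$ has been absorbed; the leading factor cancels the $\smabs{\redD_{G'}(\gamma)}^{1/2}$ from the previous step. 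For (III), Proposition \ref{prop:orbit} converts the old-normalization integral into
\[
\indx{H'}{H^{\prime\circ}}^{-1}\card{(\fh',\fh)_{x,(0,0):(0,0+)}}^{1/2}\sum_{h\in H/H\conn}\hat\mu^{H\conn}_{\Ad^*(h)X^*}
\]
in the new normalization; the index factor is exactly $(\textup{III}'_1)$, and the cardinality ratio cancels $\card{\fh_{x,0:0+}}^{-1/2}\card{\fh'_{x,0:0+}}^{1/2}$ from the indices. Finally, Definition \ref{defn:normal-harm} rewrites each $\hat\mu^{H\conn}_{\Ad^*(h)X^*}(Y)$ as $\smabs{\redD_{H\conn}(X^*)}^{-1/2}\smabs{\redD_{H\conn}(Y)}^{-1/2}\hat O^{H\conn}_{\Ad^*(h)X^*}(Y)$; since $X^*$ is semisimple, $\redD_{H\conn}(X^*) = \redD_H(X^*)$ by Remark \ref{rem:disc:roots}, so the first factor cancels the remaining $\smabs{\redD_H(X^*)}^{1/2}$ from the indices, and the second is the discriminant factor in front of $\Phi_{\pi'}(\gamma)$ on the right.

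The proof is therefore essentially organized bookkeeping; the main obstacle is keeping track of the four distinct discriminants $\redD_G$, $\redD_{G'}$, $\redD_H$, $\redD_{H\conn}$ evaluated at the three arguments $\gamma$, $X^*$, $Y$, together with the reductive-quotient cardinalities appearing in both Corollary \ref{cor:const} and Proposition \ref{prop:orbit}, and verifying that they telescope cleanly. The most delicate step is the identification $(\textup{II}) \leftrightarrow (\textup{II}')$, where one must show that the discriminant splitting from Lemma \ref{lem:part-disc} accounts exactly for the difference between evaluating $\Theta_{\pi'}$ at $\gamma_{<r}$ and its normalized cousin $\Phi_{\pi'}$ at $\gamma$; this is the only place where the compactness of the quintuple, rather than pure algebra, plays a role.
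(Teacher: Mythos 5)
Your overall architecture matches the paper's: Proposition \ref{prop:root} for $(\textup I)$, the square root of Corollary \ref{cor:const} for the index factors, Definition \ref{defn:normal-harm} for $(\textup{II})$, Proposition \ref{prop:orbit} together with Definition \ref{defn:normal-harm} for $(\textup{III})$, and then telescoping cancellation. That is exactly the paper's plan, and the cardinality bookkeeping you describe (the $\card{(\fh',\fh)_{x,(0,0):(0,{0+})}}^{1/2}$ from Proposition \ref{prop:orbit} cancelling the $\card{\fh_{x,0:0+}}^{-1/2}\card{\fh'_{x,0:0+}}^{1/2}$ from Corollary \ref{cor:const}, the $\smabs{\redD_H(X^*)}^{\pm1/2}$ cancelling, and so on) is right.

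However, your treatment of $(\textup{II})\leftrightarrow(\textup{II}')$ is a genuine gap. You try to invoke Lemma \ref{lem:part-disc} ``applied to $G'$'' so as to convert $\Theta_{\pi'}(\gamma_{<r})$ into $\smabs{\redD_{G'}(\gamma)}^{-1/2}\Phi_{\pi'}(\gamma)$, absorbing the $H'$-discriminant contribution of $\gamma_{\ge r}$. This cannot work as stated. Lemma \ref{lem:part-disc} is an identity purely about discriminants, and says nothing that would let you relate $\Theta_{\pi'}(\gamma_{<r})$ to $\Theta_{\pi'}(\gamma)$; for that identity you would also need to know that $\Theta_{\pi'}$ is constant under multiplication by $\gamma_{\ge r}$, which is not available at this point. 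More fundamentally, $\Phi_{\pi'}$ is a function on $G'$, and the hypotheses give only that $\gamma_{<r}\in G'$ --- $\gamma$ itself need not lie in $G'$, so $\Phi_{\pi'}(\gamma)$ is not even defined. The paper handles this step in one line: by Definition \ref{defn:normal-harm}, $(\textup{II}) = \smabs{\redD_{G'}(\gamma_{<r})}^{-1/2}(\textup{II}')$, which is correct precisely because $(\textup{II}')$ is $\Phi_{\pi'}(\gamma_{<r})$. The passage from $\gamma_{<r}$-discriminants to $\gamma$-discriminants is \emph{deferred} to the proof of Theorem \ref{thm:ratl}, where Lemma \ref{lem:part-disc} is applied to $G$ together with Hypothesis \ref{hyp:stronger-mock-exp}(\ref{hyp:stronger-mock-exp:disc}) to convert $\smabs{\redD_H(Y_{\ge r})}$ into $\smabs{\redD_H(\gamma_{\ge r})}$; it does not belong in the proof of this lemma. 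Relatedly, Corollary \ref{cor:const} is stated under the running convention $\gamma=\gamma_{<r}$ of \S\ref{sec:indices}, so the discriminant it produces is $\smabs{\redD_G(\gamma_{<r})}^{-1/2}$, not $\smabs{\redD_G(\gamma)}^{-1/2}$ as you assert; your closing remark that compactness is the one non-algebraic input to $(\textup{II})\leftrightarrow(\textup{II}')$ is also not quite right, since compactness merely ensures $\Theta_{\pi'}$ is globally defined and plays no special role in this factor.
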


\begin{proof}
As in \S\S\ref{sec:index}--\ref{sec:orbital},
put \(\bH = C_\bG(\gamma_{< r})\)
and \(\bH' = C_{\bG'}(\gamma_{< r})\),
so that \(\CC\bG r(\gamma) = \bH\conn\)
and \(\CC{\bG'}r(\gamma) = \bH^{\prime\,\circ}\)
\xcite{adler-spice:good-expansions}*
	{Corollary \xref{cor:compare-centralizers}}.
(As in \S\ref{sec:char-notn},
we have abbreviated \(\CC{\bG'}r(\gamma_{< r})\)
to \(\CC{\bG'}r(\gamma)\).)

By Proposition \ref{prop:root},
\((\textup I) = (\textup I'_1)\dotm(\textup I'_2)\).
By Definition \ref{defn:normal-harm},
\((\textup{II})
= \abs{\redD_{G'}(\gamma_{< r})}^{-1/2}(\textup{II}')\).
By Proposition \ref{prop:orbit},
\[
(\textup{III})
= \card{(\fh', \fh)_{x, (0, 0):(0, {0+})}}^{1/2}
\smabs{\redD_H(X^*)}^{-1/2}
\smabs{\redD_H(Y)}^{-1/2}
(\textup{III}'_1)\dotm(\textup{III}'_2).
\]
%By \xcite{adler-spice:explicit-chars}*
%	{Lemma \xref{lem:mock-exp-root-value}
%and Hypothesis \xref{hyp:strong-mock-exp-refines}}
%and Remark \ref{rem:disc:roots},
%\[
%\abs{\redD_H(\mlog \gamma_{\ge r})}
%= \abs{\redD_H(\gamma_{\ge r})},
%\]
%% I don't think that this is automatic.
%% (The problem is root values of \(\gamma_{\ge r}\)
%% that are deeper than \(\gamma_{\ge r}\).)
%% It is now subsumed by Hypothesis \ref{hyp:mexp-disc}.
The result then follows from Corollary \ref{cor:const}.
\end{proof}

We are now ready to re-write the character formul{\ae} of
\cite{adler-spice:explicit-chars} in what seems to us to be
nearly the optimal form.
With the exception of Hypothesis \ref{hyp:stronger-mock-exp},
which has not been needed so far,
the hypotheses of Theorem \ref{thm:ratl} are the same ones
that have been in force throughout the section;
we simply re-capitulate them here for convenient reference.
Remember that we require \bG to satisfy
\xcite{adler-spice:good-expansions}*
	{Hypotheses \xref{hyp:conn-cent}
and \xref{hyp:good-weight-lattice}}.

\begin{thm}
\label{thm:ratl}
Suppose that the group \bG satisfies
Hypothesis \ref{hyp:stronger-mock-exp}
(in addition to
\xcite{adler-spice:good-expansions}*
	{Hypotheses \xref{hyp:conn-cent}
and \xref{hyp:good-weight-lattice}}).
Let \((G', \pi', \phi, x, X^*)\) be a depth-\(r\), compact,
central, expanded cuspidal quintuple in \(G\).
Suppose that \(\gamma = \gamma_{< r}\gamma_{\ge r}\)
is regular, semisimple, and \(r\)-approximable,
and put \(Y_{\ge r} = \mexp_{\bT, x}\inv \gamma_{\ge r}\) for
some \tamefield-split maximal \(G\)-torus \bT such that
\(\gamma_{< r} \in T\) and \(x \in \BB(\bT, \field)\).
Then
{\def\summer{%
\sum_{\substack{
	(L, \chi) \in \CC G r(\gamma)\bslash G\dota(G', \pi') \\
	\gamma_{< r} \in L
}}%
}
\begin{align*}
\Phi_{\YuUp_{G'}^G \pi'}(\gamma)
={} & \summer
	\varepsilon\symmram(\chi, \gamma_{< r})\dotm
   	\varepsilon\noram(\chi, \gamma_{< r})
	\Phi_\chi(\gamma_{< r})\times{} \\
    & \hphantom\summer\qquad
	\tilde e(\chi, \gamma_{< r})
	\hat O^{\CC G r(\gamma)}_\chi(Y_{\ge r}).
\end{align*}}
\end{thm}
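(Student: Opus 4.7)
The plan is to derive the formula directly from the character formula of \xcite{adler-spice:explicit-chars}*{Theorem \xref{thm:char-tau|pi-1}}, using the summand-by-summand translation packaged in Lemma \ref{lem:char-summand} together with the reindexing results of \S\ref{sec:index}. That earlier formula expresses $\Theta_{\YuUp_{G'}^G \pi'}(\gamma)$ as a sum indexed by a set of double cosets whose representatives $g$ parametrise the relevant $G$-conjugates $(\Int(g)\bG', \Int(g)\pi')$ of $(G', \pi')$; each summand is precisely the product $(\mathrm I)\cdot[\text{index}]^{1/2}\cdot(\mathrm{II})\cdot(\mathrm{III})$ on the left-hand side of Lemma \ref{lem:char-summand}, so that lemma rewrites each one, up to the overall factor $\smabs{\redD_G(\gamma)}^{-1/2}\smabs{\redD_{\CC G r(\gamma)}(Y_{\ge r})}^{-1/2}$, in terms of $\varepsilon\symmram$, $\varepsilon\noram$, $\Phi_{\pi'}(\gamma_{< r})$, $\tilde e(\pi', \gamma_{< r})$, the factor $[C_{G'}(\gamma_{< r}) : \CC{G'}r(\gamma)]\inv$, and an inner $H/H\conn$-sum of Fourier transforms $\hat O^{\CC G r(\gamma)}_{\Ad^*(h)\inv X^*}(Y_{\ge r})$.

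The normalisation is handled as follows: multiplying the whole formula by $\smabs{\redD_G(\gamma)}^{1/2}$ to convert $\Theta_{\YuUp_{G'}^G \pi'}$ to $\Phi_{\YuUp_{G'}^G \pi'}$ (Definition \ref{defn:normal-harm}) cancels the first half of the overall factor; Hypothesis \ref{hyp:stronger-mock-exp}(\ref{hyp:stronger-mock-exp:disc}) combined with Lemma \ref{lem:part-disc} absorbs the remaining $\smabs{\redD_{\CC G r(\gamma)}(Y_{\ge r})}^{-1/2}$ into the passage from $\hat\mu$ to $\hat O$ (in the sense of Definitions \ref{defn:normal-harm} and \ref{defn:O-pi}); and clause (\ref{hyp:stronger-mock-exp:equi}) of the same hypothesis moves the evaluation point of the Fourier transform from the mock-exponential lift of $\gamma_{\ge r}$ to $Y_{\ge r}$ itself while matching the $\Ad^*(h)$-twists of $X^*$ with the conjugation action implicit in the parameter $(L, \chi)$.

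To reindex, I would use Corollary \ref{cor:rigid-sum} to replace the outer double-coset sum by $\stab_{G'}(\ox)\bslash\mc C/H$ and then group the representatives by the $\CC G r(\gamma)$-conjugacy class of $(\Int(g)\bG', \Int(g)\pi')$, giving the sum over $(L, \chi) \in \CC G r(\gamma)\bslash G\dota(G', \pi')$ with $\gamma_{< r} \in L$ displayed in the theorem. By Remark \ref{rem:sign-choices}, the sign factors transform covariantly under conjugation and become $\varepsilon\symmram(\chi, \gamma_{< r})$, \&c., while Corollary \ref{cor:rigid-intersect} shows that the combinatorial factor $[C_{G'}(\gamma_{< r}) : \CC{G'}r(\gamma)]\inv$ precisely cancels with the multiplicity arising from the inner sum over $h \in H/H\conn = C_G(\gamma_{< r})/\CC G r(\gamma)$, leaving the single term $\hat O^{\CC G r(\gamma)}_\chi(Y_{\ge r})$.

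The principal obstacle is exactly this last cancellation: although the rigidity statements of Corollaries \ref{cor:rigid-sum} and \ref{cor:rigid-intersect} are tailored for it, tracking all the $G$- vs.\ $H$- vs.\ $\CC G r(\gamma)$-stabilisers and correctly identifying each conjugation action so that the outer double-coset sum of \cite{adler-spice:explicit-chars}, the inner $H/H\conn$-sum from Proposition \ref{prop:orbit}, and the index $[C_{G'}(\gamma_{< r}) : \CC{G'}r(\gamma)]\inv$ combine to yield the \emph{unweighted} sum claimed in the theorem requires care. No new ideas are needed beyond those already established in \S\S\ref{sec:indices}--\ref{sec:orbital}, but the bookkeeping is delicate.
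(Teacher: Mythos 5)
Your overall strategy does match the paper's: apply Lemma~\ref{lem:char-summand} to each summand of the formula from \xcite{adler-spice:explicit-chars}*{Theorem~\xref{thm:char-tau|pi-1} and Proposition~\xref{prop:induction1}}, pull the normalising power of $\smabs{\redD_G}$ across, and reindex. However, there are two genuine gaps in the reindexing step, which is exactly where you flagged the ``delicate bookkeeping.''

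First, you describe the index factor $[C_{G'}(\gamma_{<r}):\CC{G'}r(\gamma)]\inv$ as ``cancelling'' the inner $H/H\conn$-sum to ``leave the single term $\hat O^{\CC G r(\gamma)}_\chi(Y_{\ge r})$,'' and you attribute this to Corollary~\ref{cor:rigid-intersect}. That is not what happens. The inner $h$-sum produces \emph{distinct} orbital integrals $\hat O^{H\conn}_{\Ad^*(h)\inv X^{\prime\,*}}$ for distinct $h$; nothing collapses to one term. Rather, the index corrects for overcounting when the coarse double-coset sum over $G'\bslash G/H$ with the inner $H/H\conn$-sum is repackaged as a sum over the finer quotient $G'\bslash G/H\conn$: explicitly, one needs the elementary counting identity
\[
\sum_{g \in G'\bslash G/H}\indx{\Int(g)\inv G' \cap H}{(\Int(g)\inv G' \cap H)\conn}\inv\sum_{h \in H/H\conn} F(g h) = \sum_{g \in G'\bslash G/H\conn} F(g),
\]
using $\bH' = \bH\cap\bG'$ (Remark~\ref{rem:funny-centralizer-descends}). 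Corollary~\ref{cor:rigid-intersect} is not the key lemma for this step; it feeds into Proposition~\ref{prop:orbit}, and hence into Lemma~\ref{lem:char-summand}, but plays no further role at this stage.

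Second, your reindexing would leave the sum running over (double cosets of) $\mc C(\pi', \gamma_{<r})$, whereas the theorem's sum is over all $(L,\chi)\in \CC G r(\gamma)\bslash G\dota(G',\pi')$ with $\gamma_{<r}\in L$ --- a larger set. To pass from one to the other you must extend the sum to $\wtilde{\mc C}=\set{g\in G}{\Int(g)\gamma_{<r}\in G'}$, and then argue that the added summands vanish; the paper does this using \xcite{adler-spice:explicit-chars}*{Lemma~\xref{lem:orbital-cancel} and Hypothesis~\xref{hyp:strong-mock-exp-equi}}. Relatedly, you need to establish that $G'g\mapsto g\inv\dota(G',\pi')$ is a bijection $G'\bslash G\to G\dota(G',\pi')$ --- this comes from the rigidity of Remark~\ref{rem:uniq-expand} together with \cite{yu:supercuspidal}*{Lemma~8.3} --- in order to identify $G'\bslash \wtilde{\mc C}/H\conn$ with the theorem's indexing set. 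Without these two steps, what you get is a sum over the wrong indexing set. (A smaller point: the conjugation-equivariance of the sign factors should be cited from Remark~\ref{rem:ratl-sign} rather than Remark~\ref{rem:sign-choices}.)
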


\begin{proof}
As in the proof of Lemma \ref{lem:char-summand},
put \(\bH = C_\bG(\gamma_{< r})\).  Recall that
the connected group \(\CC\bG r(\gamma)\) is the identity
component of \bH
\xcite{adler-spice:good-expansions}*
	{Corollary \xref{cor:compare-centralizers}}.

For every \(g \in G\), we have that
\(g\inv\dota(G', \pi', \phi, x, X^*)\) is a central,
expanded cuspidal \(G\)-quintuple.
Thus, if \(g_1, g_2 \in G\) satisfy
\(g_1\inv\dota(G', \pi') = g_2\inv\dota(G', \pi')\),
then, by Remark \ref{rem:uniq-expand}, we have that
\(\Ad^*(g_1)\inv X^* \equiv \Ad^*(g_2)\inv X^*
	\pmod{\fg_{x, {(-r)+}}}\).
By \cite{yu:supercuspidal}*{Lemma 8.3},
this implies that \(g_1 \in G'g_2\).
Thus, the map
\anonmap{G'\bslash G}{G\dota(G', \pi')}
given by \anonmapto{G' g}{g\inv\dota(G', \pi')}
is a bijection.
Since this map is equivariant for the natural (right)
\(G\)-action on both spaces, it induces a bijection
\anonmap{G'\bslash G/H\conn}{H\conn\bslash G\dota(G', \pi')}.

By \xcite{adler-spice:explicit-chars}*
	{Theorem \xref{thm:char-tau|pi-1}
	and Proposition \xref{prop:induction1}}
and Corollary \ref{cor:rigid-sum},
using the notation of Definition \ref{defn:index},
we have that
\(\Theta_{\YuUp_{G'}^G \pi'}(\gamma)\) is the sum over
\(g \in G'\bslash\mc C(\pi', \gamma_{< r})/H\)
of the quantity denoted in
\cite{adler-spice:explicit-chars} by
\begin{multline*}
\varepsilon(\phi, \delta_{< r})
\mf G(\phi, \delta_{< r})\times{} \\
\bigindx
	{\odc{\delta_{< r}; x, r}}
	{\odc{\delta_{< r}; x, r}_{G'}G_{x, r/2}}^{1/2}
\bigindx
	{\odc{\delta_{< r}; x, {r+}}}
	{\odc{\delta_{< r}; x, {r+}}_{G'}G_{x, {(r/2)+}}}^{1/2}\times{} \\
\Theta_{\pi'}(\delta_{< r})
\hat\mu^{C_G(\delta_{< r})}_{X^*}(
	\mexp_{\bT, x}\inv \delta_{\ge r}
),
\end{multline*}
where \(\delta = \Int(g)\gamma\)
(so that \(\delta_{< r} = \Int(g)\gamma_{< r}\)
and \(\delta_{\ge r} = \Int(g)\gamma_{\ge r}\)).
(Our \(\pi'\) corresponds to \(\pi' \otimes \phi\),
in the notation of \cite{adler-spice:explicit-chars}.)
Note that \(\delta_{< r} = \Int(g)\gamma_{< r}\)
and \(\delta_{\ge r} = \Int(g)\gamma_{\ge r}\).
By Lemmata \ref{lem:char-summand} and \ref{lem:part-disc},
Remark \ref{rem:ratl-sign},
and Hypothesis \ref{hyp:stronger-mock-exp},
we may re-write the summand as
\begin{multline}
\tag{$*$}
\label{thm:ratl:eq:summand}
\smabs{\redD_G(\gamma)}^{-1/2}\dotm
\varepsilon\symmram(\chi, \gamma_{< r})\dotm
\varepsilon\noram(\chi, \gamma_{< r})
	\Phi_\chi(\gamma_{< r})\times{} \\
\indx{L \cap H}{(L \cap H)\conn}\inv
\sum_{h \in H/H\conn}
	\tilde e(\chi, \gamma_{< r})
	\hat O^{H\conn}_{\Ad^*(h)\inv X^{\prime\,*}}(Y_{\ge r}),
\end{multline}
where
\((L, \chi, X^{\prime\,*}) = g\inv\dota(G', \pi', X^*)\).

Now write
\(\wtilde{\mc C}
= \set{g \in G}{\Int(g)\gamma_{< r} \in G'}\).
For any \(g \in \wtilde{\mc C}\), we have that
\(g\inv\dota(G', \pi', \phi, x, X^*)\) is a central,
expanded cuspidal \(G\)-quintuple, hence,
with the notation of Definition \ref{defn:O-pi}, that
\begin{equation}
\tag{$**$}
\label{thm:ratl:eq:O-chi}
\hat O^{H\conn}_{\Ad^*(g)\inv X^*}
= \hat O^{H\conn}_{\pi' \circ \Int(g)}.
\end{equation}
Further, the expression \eqref{thm:ratl:eq:summand} is
defined, and, by \xcite{adler-spice:explicit-chars}*
	{Lemma \xref{lem:orbital-cancel}
	and Hypothesis \xref{hyp:strong-mock-exp-equi}},
equals \(0\) if \(g \not\in \mc C\);
so \(\Theta_\pi(\gamma)\) is also the sum of
\eqref{thm:ratl:eq:summand} over \(\wtilde{\mc C}\).

Now an elementary counting argument, together with
Remark \ref{rem:funny-centralizer-descends},
shows that
\[
\sum_{g \in G'\bslash G/H}
	\indx
		{\Int(g)\inv G' \cap H}
		{(\Int(g)\inv G' \cap H)\conn}\inv
\sum_{h \in H/H\conn} F(g h)
= \sum_{g \in G'\bslash G/H\conn} F(g)
\]
for any \map F{G'\bslash G/H\conn}\C; so combining
\eqref{thm:ratl:eq:summand} and \eqref{thm:ratl:eq:O-chi},
and using the bijection
\anonmap{G'\bslash G/H\conn}{H\conn\bslash G\dota(G', \pi')},
gives the desired result.
\end{proof}

%% This is the `easy group-theoretic fact' referenced in the
%% above proof.

%\begin{lem}
%\label{lem:param-fibre}
%Let \(G\) be a group
%with subgroups \(G'\) and \(H\),
%and let \(N\) be a normal subgroup of \(H\).
%Then the fibres of the natural map
%\anonmap{H/N}{G'\bslash G'H/N}
%are torsors under \(G' \cap N\bslash G' \cap H\).
%\end{lem}
%
%\begin{proof}
%Fix \(h \in H\),
%and suppose that \(j \in H\) satisfies
%\(G'j H\conn = G'h H\conn\).
%Then, since \(H\conn\) is normal in \(H\), we have that
%% G'j H\conn = G'H\conn h
%% G'j H\conn h\inv = G'H\conn
%% G'j h\inv H\conn = G'H\conn
%% j h\inv \in G'H\conn
%\(j h\inv \in G'H\conn \cap H = H'H\conn\),
%so that \(j \in H'h H\conn\).
%This shows that
%\anonmap{H'\bslash H/H\conn}{G'\bslash G'H/H\conn}
%is an injection, and it is clear that it is a surjection.
%
%It follows that the fibres of
%\anonmap{H/H\conn}{G'\bslash G'H/H\conn}
%are those of
%\anonmap{H/H\conn}{H'\bslash H/H\conn}.
%Again, fix \(h \in H\).
%It is clear that \(H'\) acts transitively by left
%multiplication on the fibre over \(H'h H\conn\).
%If \(h' \in H'\) satisfies \(h'h H\conn = h H\conn\), then,
%again by normality of \(H\conn\), we have that
%\(h' \in h H\conn h\inv \cap H'
%= H\conn \cap H' = H^{\prime\,\circ}\).
%The converse is clear.
%\end{proof}
	%% This proof uses \(H'\) in place of
	%% \(G' \cap H\),
	%% and \(H\conn\) in place of \(N\);
	%% and
	%% uses the fact that
	%% \(H^{\prime\,\circ} = G' \cap N\).

\numberwithin{thm}{section}

\section{Stable character sums}
\label{sec:stable}

Throughout this section, fix a depth-\(r\), toral,
cuspidal pair \((T, \phi)\).
This pair plays the same r\^ole as the pair
\((G', \pi')\) did in \S\ref{sec:ratl}; but
note that the requirements that the pair
be compact (Definition \ref{defn:cusp-pair})
and central (Definition \ref{defn:cent-cusp-pair})
are now redundant.
We will soon require
(in Theorems \ref{thm:almost-stable} and \ref{thm:stable})
that \bT is \unfield-split, but we do not need to do so yet.

Also fix, until Theorem \ref{thm:stable},
a regular, semisimple, \(r\)-approximable element
\(\gamma = \gamma_{< r}\gamma_{\ge r}\),
and put \(\bH = \CC\bG r(\gamma)\)
(\ie, not the possibly disconnected group
\(C_\bG(\gamma_{< r})\) of \S\ref{sec:index},
but its identity component
\xcite{adler-spice:good-expansions}*
	{Corollary \xref{cor:compare-centralizers}},
as in \S\ref{sec:char-notn}).

We are almost in a position to show that certain
character sums are stable (see Theorem \ref{thm:stable}),
but we need two technical preliminaries.
First, we show in Lemma \ref{lem:SO} that the definition of
\(\smash{\widehat{SO}}^H_{X^*}\) as a sum
(Definition \ref{defn:normal-harm})
may be translated into an analogous definition of
\(\smash{\widehat{SO}}^H_\phi\)
(Definition \ref{defn:O-pi}).
Next, we show in Lemma \ref{lem:stab-approx} that normal
approximations behave well under stable conjugacy.

\begin{lem}
\label{lem:SO}
Write \(\bH\dota(T, \phi)\) for the set of
stable \bH-conjugates of \((T, \phi)\)
\cite{debacker-reeder:depth-zero-sc}*{\S9.4, p.~850}.
Then
\[
\smash{\widehat{SO}}^H_\phi
= \sum_{(S, \theta) \in H\bslash\bH\dota(T, \phi)}
	\hat O^H_\theta
\]
(as functions on \(\fh\rss \cap \fh_r\)).
\end{lem}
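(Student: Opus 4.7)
The plan is to exhibit a natural bijection between the indexing sets of the two sums, and check that the corresponding summands agree term by term. Fix a central expanded cuspidal $G$-quintuple $(T, \phi, \phi, x, X^*)$ corresponding to $(T, \phi)$, which exists since toral pairs are automatically compact and central (see the remark after Definition~\ref{defn:cent-cusp-pair}). By Definitions~\ref{defn:normal-harm} and~\ref{defn:O-pi},
\[
\smash{\widehat{SO}}^H_\phi
= \sum_{Y^* \in H\bslash\bH\dota X^*} \hat O^H_{Y^*}
\]
on $\fh\rss \cap \fh_r$, so it suffices to construct a bijection
\[
\bar\Psi\colon H\bslash\bH\dota(T, \phi)
\xrightarrow{\ \sim\ } H\bslash\bH\dota X^*
\]
together with the term-level identification $\hat O^H_\theta = \hat O^H_{\bar\Psi(S, \theta)}$ on $\fh\rss \cap \fh_r$ for each $(S, \theta)$.

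To define $\bar\Psi$, I would associate to $(S, \theta) = g\dota(T, \phi)$ (with $g \in \bH(\unfield)$) the $H$-orbit of $Y^* := \Ad^*(g)X^*$. A different choice $g'$ representing the same stable conjugacy of $(T, \phi)$ differs from $g$ by right-multiplication by an element of $\bT(\unfield)$, the stabiliser of the toral cuspidal pair; and since $\bT$ acts trivially on $\ft^*$, this yields the same $Y^*$. There is a second, independent ambiguity from the non-uniqueness of the generic $X^*$ realising $\phi$: by Remark~\ref{rem:uniq-cent-expand}, a different central expanded quintuple for $(T, \phi)$ replaces $X^*$ by an element of $X^* + \ft^*_{x, -r/2}$, correspondingly shifting $Y^*$ within a translate of $\fs^*_{g \cdot x, -r/2}$ (where $\fs = \Lie S$). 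By \xcite{adler-spice:explicit-chars}*{Lemma~\xref{lem:orbital-cancel}} (cf.\ the remark immediately after Definition~\ref{defn:O-pi}), the restriction of $\hat O^H_{Y^*}$ to $\fh\rss \cap \fh_r$ is unaffected by this shift; so $\bar\Psi$ descends to a well-defined, $H$-equivariant map on equivalence classes.

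For surjectivity of $\bar\Psi$: given $Y^* \in \fh^* \cap \Ad^*(\bH(\unfield))X^*$, write $Y^* = \Ad^*(g)X^*$, put $S := \Int(g)\bT = C_\bH(Y^*)$, and let $\theta := \phi \circ \Int(g)^{-1}$; rationality of $S$ and $\theta$ follows from that of $Y^*$ and the prescribed behaviour of $\theta$ below depth $r$. For injectivity I would use the genericity of $X^*$: if $h \cdot \Ad^*(g_1)X^* = \Ad^*(g_2)X^*$ for $g_1, g_2 \in \bH(\unfield)$ and $h \in H$, then $g_2^{-1}hg_1 \in C_{\bH(\unfield)}(X^*) = \bT(\unfield)$ by \cite{yu:supercuspidal}*{Lemma~8.3}, so $h$ conjugates $(S_1, \theta_1)$ to $(S_2, \theta_2)$. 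Finally, for $(S, \theta) = g\dota(T, \phi)$ the tuple $(S, \theta, \theta, g \cdot x, \Ad^*(g)X^*)$ is a central expanded quintuple corresponding to $(S, \theta)$, and Definition~\ref{defn:O-pi} then gives $\hat O^H_\theta = \hat O^H_{\bar\Psi(S, \theta)}$ on $\fh\rss \cap \fh_r$, so summing over the bijection yields the claim.

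The main obstacle I anticipate is cleanly handling the two-layer ambiguity---in the choice of conjugating element $g$ and in the generic functional $Y^*$ representing $\theta$---so that the map $\bar\Psi$ is unambiguously well-defined, $H$-equivariant, and compatible with the summand identification. Both layers are ultimately absorbed by \xcite{adler-spice:explicit-chars}*{Lemma~\xref{lem:orbital-cancel}}, but keeping the bookkeeping straight between rational $H$-conjugacy on pairs and on functionals is the main technical task.
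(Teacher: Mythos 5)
Your plan — construct the natural bijection $(S,\theta) = g\dota(T,\phi) \mapsto H\cdot\Ad^*(g)X^*$ and match the summands term by term — is exactly the paper's approach, and your citations to \cite{yu:supercuspidal}*{Lemma 8.3} and \xcite{adler-spice:explicit-chars}*{Lemma \xref{lem:orbital-cancel}} land in the right places. But the technical heart of the paper's proof, which your write-up asserts rather than proves, is that the relation ``$X^*$ realises $\phi$ at depth $(r/2)+$'' is \emph{equivariant under $\bH(\unfield)$-conjugation}: that is, for $g \in \bH(\unfield)$ with $\Ad^*(g)X^*$ rational, $\Ad^*(g)X^*$ realises $\phi\circ\Int(g)^{-1}$ on $\bigl(\Int(g)T\bigr)_{(r/2)+}$. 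The paper establishes this by explicitly conjugating the Moy--Prasad isomorphism $\mexp_{(r/2)+:r+}$ for $\bT$ to the one for $\bT' = \Int(g)\bT$ and chasing the definition of ``realise''; you cannot get it for free. This point does double duty: it both underlies your unproven claim that $\stab_{\bH(\unfield)}(T,\phi) = \bT(\unfield)$ (the inclusion $\supseteq$ is trivial, but $\subseteq$ requires that $\phi\circ\Int(g)^{-1} = \phi$ forces $\Ad^*(g)X^*$ and $X^*$ to realise the same character, whence by Remark \ref{rem:uniq-cent-expand} they agree modulo $\ft^*_{-r/2} \subseteq \pmb\ft(\unfield)^*_{(-r)+}$, whence $g \in \bT(\unfield)$ by Lemma 8.3), and it is precisely what makes $\hat O^H_\theta = \hat O^H_{\Ad^*(g)X^*}$ on $\fh^{\textup{rss}} \cap \fh_r$ under Definition \ref{defn:O-pi}.

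Two smaller points. First, to apply Definition \ref{defn:O-pi} you appeal to $\bigl(S, \theta, \theta, g\cdot x, \Ad^*(g)X^*\bigr)$ being a central expanded quintuple, but $g$ is generally not in $G$, so $g\cdot x$ need not lie in $\BB(\bG,\field)$; the paper sidesteps this by verifying the ``realises'' relation directly on $T'_{(r/2)+}$ without ever invoking a building point moved by $g$. Second, your surjectivity step needs the rationality criterion (rationality of $\Ad^*(g)X^*$ is \emph{equivalent} to rationality of both $\Int(g)\bT$ and the map $\Int(g)\colon\bT\to\Int(g)\bT$), which the paper deduces from $C_\bH(\bT) = \bT$; ``rationality of $S$ and $\theta$ follows from that of $Y^*$'' is the nontrivial half of that equivalence and deserves its own line. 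None of this changes your strategy, but the Moy--Prasad conjugation computation is the load-bearing step and needs to appear.
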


\begin{proof}
Recall that \(X^*\) is an element of \(\ft^*_{-r}\)
that realises \(\phi\) on \(T_{r/2}\).
By \cite{yu:supercuspidal}*{Lemma 8.3}
(and the remark on base change at the top of
\cite{yu:supercuspidal}*{p.~597}),
the stabiliser of \(X^* + \pmb\ft(\unfield)^*_{(-r)+}\),
hence \textit{a fortiori} of \(X^*\),
in \(\bH(\unfield)\) is \(\bT(\unfield)\).
In particular, \(X^*\) is strongly regular semisimple.
Given this, and the fact that \(C_\bH(\bT) = \bT\)
\cite{borel:linear}*{Corollary 13.17(2)},
it is a straightforward check that,
for \(g \in \bH(\unfield)\),
we have that \(\Ad^*(g)X^*\) is defined over \field
if and only if
both the group \(\Int(g)\bT\)
and the map \map{\Int(g)}\bT{\Int(g)\bT}
are defined over \field.
Fix such an element \(g\), and put \(\bT' = \Int(g)\bT\).

The Moy--Prasad map for \bT (at depth \((r/2)+\))
is defined to be the unique isomorphism
\map
	{\mexp_{{(r/2)+}:{r+}}}
	{\ft_{{(r/2)+}:{r+}}}
	{T_{{(r/2)+}:{r+}}}
such that,
if \(Y \in \ft_{(r/2)+}\)
and \(\gamma \in \mexp_{{(r/2)+}:{r+}}(Y)\),
then
\[
\ord\bigl((\chi(\gamma) - 1) - \textup d\chi(Y)\bigr) > r
\]
for all characters \(\chi\) of \(\bT_\sepfield\)
\cite{adler:thesis}*{\S1.5, p.~11}.
Then
\begin{equation}
\tag{$*$}
\label{lem:SO:eq:mexp-equi}
\mexp'_{{(r/2)+}:{r+}}
\ldef \Int(g) \circ \mexp_{{(r/2)+}:{r+}} \circ \Ad(g)\inv
\end{equation}
satisfies the analogous property for \(\bT'\).
By the definition of `realise', if
\(Y \in \ft_{(r/2)+}\)
and \(\gamma \in \mexp_{{(r/2)+}:{r+}}(Y)\),
then
\begin{equation}
\tag{$**$}
\label{lem:SO:eq:conj-g}
(\phi \circ \Int(g)\inv)(\Int(g)\gamma)
= \phi(\gamma)
= \AddChar(\pair{X^*}Y)
= \AddChar(\pair{\Ad^*(g)X^*}{\Ad(g)Y}).
\end{equation}
Since \(\Ad(g)\) carries \(\ft_{(r/2)+}\) and \(\ft_{r+}\)
onto \(\ft'_{(r/2)+}\) and \(\ft'_{r+}\), respectively,
it follows from \eqref{lem:SO:eq:mexp-equi} and
\eqref{lem:SO:eq:conj-g}, and the definition of
`realise' again, that \(\Ad^*(g)X^*\) realises
\(\phi \circ \Int(g)\inv\) on \(\ft'_{(r/2)+}\).
In particular, since we have already observed that
the stabiliser of \(X^* + \pmb\ft(\unfield)_{(-r)+}\)
in \(\bH(\unfield)\) is \(\bT(\unfield)\), it follows that
\(\phi \circ \Int(g)\inv = \phi\)
(if and) only if
\(g \in \bT(\unfield)\).

In all, we have shown that
\anonmapto
	{(\Int(g)T, \phi \circ \Int(g)\inv)}
	{\Ad^*(g)X^*}
is a bijection from the \bH-stable conjugacy class of
\((T, \phi)\)
to \(\Ad^*(\bH(\unfield))X^* \cap \fg^*\)
such that, if \((T', \phi')\) has image \(X^{\prime\,*}\),
then \(\phi'\) is represented on \(\ft'_{(r/2)+}\) by
\(X^{\prime\,*}\);
in which case, in the notation of Definition \ref{defn:O-pi},
we have that \(\hat O^H_{\phi'} = \hat O^H_{X^{\prime\,*}}\).
Since the bijection is \(H\)-equivariant, the desired
equality follows.
\end{proof}

\begin{lem}
\label{lem:stab-approx}
If
\begin{itemize}
\item \bS is a \tamefield-split maximal \(G\)-torus,
\item \(\gamma \in S\),
%\item \(\gamma \in S\) is strongly regular in \(G\),
\item \((\gamma_i)_{0 \le i < r}\) is
a normal \(r\)-approximation to \(\gamma\)
in \(G\)
\xcite{adler-spice:good-expansions}*
	{Definition \xref{defn:r-approx}},
and
\item \(g \in \bG(\sepfield)\) is such that
\(g\inv\sigma g \in \bS(\sepfield)\) for all
\(\sigma \in \Gamma\),
\end{itemize}
then \((\Int(g)\gamma_i)_{0 \le i < r}\) is
a normal \(r\)-approximation to \(\Int(g)\gamma\) in \(G\).
\end{lem}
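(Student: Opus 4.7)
The plan is to show that every condition in the definition of normal \(r\)-approximation (see \xcite{adler-spice:good-expansions}*{Definition \xref{defn:r-approx}}) transfers from \((\gamma_i)\) to \((\Int(g)\gamma_i)\) by simply applying \(\Int(g)\) on \(\sepfield\)-points and then checking that everything in sight remains \(\field\)-rational.

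First I would establish the rationality assertion, which is the only subtle point. By definition of a normal \(r\)-approximation, the \(\gamma_i\) all lie in a common \tamefield-split maximal \(G\)-torus, which we may take to be \bS (since \(\gamma \in S\)). For each \(\sigma \in \Gamma\), the element \(g\inv\sigma(g)\) lies in \(\bS(\sepfield)\), hence centralises every \(\gamma_i\). Therefore
\[
\sigma(\Int(g)\gamma_i)
= \sigma(g)\gamma_i\sigma(g)\inv
= g\cdot\bigl(g\inv\sigma(g)\bigr)\gamma_i\bigl(g\inv\sigma(g)\bigr)\inv\cdot g\inv
= \Int(g)\gamma_i,
\]
so \(\Int(g)\gamma_i \in G\). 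The same argument shows that \(\Int(g)\gamma \in G\) and that the torus \(\Int(g)\bS\) is defined over \field\ (indeed the isomorphism \map{\Int(g)}\bS{\Int(g)\bS} is defined over \field, as both sides are fixed by \(\Gamma\)). Since \tamefield-splitness is invariant under \(\bG(\sepfield)\)-conjugation, \(\Int(g)\bS\) is again a \tamefield-split maximal \(G\)-torus, and of course \(\Int(g)\gamma \in \Int(g)S\).

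Next I would transfer the remaining conditions of a normal \(r\)-approximation (commutativity of the \(\gamma_i\); that \(\gamma_i \in S_{r_i}\) for the appropriate depths \(r_i\); \bG-goodness of the associated roots; and that the partial product \(\prod_{i < r} \gamma_i\) approximates \(\gamma\) modulo \(S_r\)). Each of these conditions is phrased purely in terms of root data for \bS, products of elements of \(\bS(\sepfield)\), and Moy--Prasad filtration subgroups of \bS; and the isomorphism \map{\Int(g)}\bS{\Int(g)\bS} over \sepfield\ induces the corresponding isomorphisms of root data and Moy--Prasad filtrations (upon moving \(x\) to \(g\dota x\) in the building). Thus each condition for \((\gamma_i)\) in \(\bS(\sepfield)\) translates directly into the corresponding condition for \((\Int(g)\gamma_i)\) in \(\Int(g)\bS(\sepfield)\), and then, by the rationality established above, these are conditions about \((\Int(g)\gamma_i)\) in \(\Int(g)S\), as required.

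The main obstacle, if any, is only bookkeeping: checking that each clause of the (fairly elaborate) definition of normal approximation is indeed expressed in terms preserved by \(\sepfield\)-conjugation, and that the apartment of \(\Int(g)\bS\) is obtained from that of \bS by translation through \(g\) in the building over \unfield. There is no deep content; the lemma is, in essence, the observation that stable conjugacy is realised by conjugation over \sepfield\ by an element that centralises the approximants, so the approximation is transferred formally.
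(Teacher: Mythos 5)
Your proof is correct and follows essentially the same route as the paper's. The core observations are identical: since each \(\gamma_i\) lies in \(\bS(\sepfield)\) (the paper cites the fact that \(\gamma_i \in Z(\CC{\bG}{r}(\gamma)) \subseteq S\) for this, whereas you say ``by definition,'' but that is only a matter of which upstream statement one points at), and since \(g^{-1}\sigma g \in \bS(\sepfield)\) for all \(\sigma \in \Gamma\), both the torus \(\bS' = \Int(g)\bS\) and the isomorphism \(\Int(g)\colon \bS \to \bS'\) are defined over \(\field\); goodness of the \(\gamma_i\) transfers because root values are preserved; and the tail \(\gamma_{\ge r}\) transfers by carrying \(S_r\) to \(S'_r\) via the \(\field\)-rational isomorphism. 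The paper is a bit terser on the rationality step and a bit more explicit on goodness (it invokes root values) and on the final depth condition (noting \(\Int(g)S_r = S'_r \subseteq \CC{G}{r}(\ul\delta)_{y', r}\)), but the two arguments are the same in substance.
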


\begin{rem}
\label{rem:stab-approx}
Under the hypotheses of Lemma \ref{lem:stab-approx},
\(\delta \ldef \Int(g)\gamma\) is \field-rational.
If \(\gamma\) is a strongly regular semisimple element of \(G\),
then the \field-rationality of \(\delta\) is
\emph{equivalent} to the hypothesis on \(g\).
\end{rem}

\begin{proof}
Recall that \(\gamma_{< r} = \prod_{0 \le i < r} \gamma_i\)
and \(\gamma_{\ge r} = \prod_{i \ge r} \gamma_i\)
\xcite{adler-spice:good-expansions}*
	{\S\xref{sec:normal}, p.~52}.

%By assumption,
%\(\Int(\sigma g)\inv\sigma\gamma = \sigma\delta\)
%equals \(\Int(g)\inv\gamma\), so that
%\(g\inv\sigma g \in \bT(\sepfield)\),
%for all \(\sigma \in \Gamma\).
The group \(\bS' \ldef \Int(g)\bS\)
and the map \map{\Int(g)}\bS{\bS'}
are defined over \field.
In particular,
\(\bS'\) is a \tamefield-split, maximal \(G\)-torus,
and the set \(S'\) of \field-rational elements of \(\bS'\)
is precisely \(\Int(g)S\).

Fix an index \(i\) with \(0 \le i < r\).
Since \(\gamma_i \in Z(H) \subseteq S\)
\xcite{adler-spice:good-expansions}*
	{Remark \xref{rem:approx-facts-in-center}},
it follows that
\(\delta_i \ldef \Int(g)\gamma_i\) belongs to \(S'\).
If \(\delta_i \ne 1\), then,
because the root values
\xcite{adler-spice:good-expansions}*
	{Definition \xref{defn:root-value}}
of \(\gamma_i\) and \(\delta_i = \Int(g)\gamma_i\)
are the same,
we have that \(\delta_i\) is good of depth \(i\)
\xcite{adler-spice:good-expansions}*
	{Definition \xref{defn:good}}.

That is, \(\ul\delta \ldef (\delta_i)_{0 \le i < r}\) is a
good sequence
\xcite{adler-spice:good-expansions}*
	{Definition \xref{defn:funny-centralizer}}.
Since \((\gamma_i)_{0 \le i < r}\) is
a normal \(r\)-approximation to \(\gamma\)
\xcite{adler-spice:good-expansions}*
	{Definition \xref{defn:r-approx}},
we have that
\(\gamma_{\ge r} = \prod_{i \ge r} \gamma_i
	\in G_{y, r} \cap S = S_r\)
for some \(y \in \BB(\bS, \field)\)
(see \xcite{adler-spice:good-expansions}*
	{Definition \xref{defn:compatibly-filtered}
and Proposition \xref{prop:compatibly-filtered-tame-rank}}).
Thus
\(\delta_{\ge r} \ldef \prod_{i \ge r} \delta_i
= \Int(g)\gamma_{\ge r} \in \Int(g)S_r = S'_r
	\subseteq \CC G r(\ul\delta)_{y', r}\),
where \(y'\) is any point in
\(\BB(\bS', \field) \subseteq \BB(\CC\bG r(\ul\delta), \field)\);
so the result follows from
\xcite{adler-spice:good-expansions}*
	{Definition \xref{defn:r-approx}} again.
\end{proof}

It turns out that the original induction map \(\YuUp^G\)
is not quite suited to Reeder's conjectural construction of
L-packets
\cite{reeder:sc-pos-depth}*{\S6.6, p.~18},
precisely because of the appearance in Theorem \ref{thm:ratl}
of the character \(\varepsilon\noram(\phi)\).
In order to obtain stable character sums from Reeder's
construction, we must twist away this (quadratic) character.

\begin{defn}
\label{defn:twisted-cusp-ind}
Put
\(\twYuUp_T^G \phi
= \YuUp_T^G \bigl(
	\phi\dotm\varepsilon\noram(\phi)\inv
\bigr)\),
where \(\YuUp_T^G\) is as in Definition \ref{defn:cusp-ind}
and \(\varepsilon\noram(\phi)\)
is the character
\anonmapto\gamma{\varepsilon\noram(\phi, \gamma)}
of Definition \ref{defn:signs}.
\end{defn}

\begin{example}
\label{example:must-twist}
There are cases in which the collection of
representations
\[
\sett{\twYuUp_S^G \theta}
	{\((S, \theta)\) is a stable conjugate of \((T, \phi)\)}
\]
formed by our modified Yu-type construction is different
from the collection
\[
\sett{\YuUp_S^G \theta}
	{\((S, \theta)\) is a stable conjugate of \((T, \phi)\)}
\]
considered by Reeder.

For \(\bG = \operatorname{PGSp}_4\),
there exist stably \bG-conjugate, \unfield-split,
\field-anisotropic, maximal \(G\)-tori \bS and \bT
such that
\begin{itemize}
\item \(\Frob\) acts by inversion on both,
\item \(\BB(\bT, \field)\) contains a hyperspecial vertex
\(x\) of \(\BB(\bG, \field)\),
and
\item \(\BB(\bS, \field)\) contains a non-hyperspecial
vertex \(y\) of \(\BB(\bG, \field)\),
\end{itemize}
Let \(\ff_2/\ff\) be the quadratic extension of \ff,
so that \(\ff_\alpha = \ff_{\alpha'} = \ff_2\) for all roots
\(\alpha\) of \bT in \bG and \(\alpha'\) of \(\bT'\) in \bG,
and \(\ff_2^1\) the kernel of the norm map from
\(\ff_2\) to \ff.
Let \(\sset{\alpha, \beta}\) be a system of simple roots for
\bT in \bG, with \(\alpha\) short;
and \(\sset{\alpha', \beta'}\) the corresponding system of
simple roots for \bS in \bG.
Since all roots of \bS and \bT in \bG are symmetric,
we have that
\(\varepsilon\nosymm_{x, r/2}(G, T, \gamma) = 1\)
and
\(\varepsilon\nosymm_{y, r/2}(G, S, \gamma')\)
are trivial for all
\(r\),
\(\gamma \in T\),
and
\(\gamma' \in S\).
Let \(\phi\) be any \(G\)-generic character of \(T\)
\cite{yu:supercuspidal}*{\S9, p.~599},
and \(\theta\) the corresponding character of \(S\).
The common depth of \(\phi\) and \(\theta\) is
a positive integer \(r\).

If \(r\) is even, then
\(\Root(\bG, \bT)_{\textup{\(x, r/2\), symm, unram}}
= \Root(\bG, \bT)\)
and \(\Root(\bG, \bS)_{\textup{\(y, r/2\), symm, unram}}
= \sset{\pm\beta', \pm(2\alpha' + \beta')}\),
so that
\(\twYuUp_T^G \phi
= \YuUp_S^G\bigl(
	\phi(\sgn_{\ff_2^1} \circ \beta)\inv
\bigr)\)
and \(\twYuUp_S^G \theta = \YuUp_T^G \theta\).
If \(\gamma = \rho^\vee(t) \in T\), where
\(\rho^\vee = \frac3 2\alpha^\vee + 2\beta^\vee\) is the
half-sum of the positive co-roots of \bT in \bG (with
respect to the chosen system of simple roots)
and \(t\) is a generator of \(\ff_2^1\), then
\(\sgn_{\ff_2^1}(\beta(\gamma)) = \sgn_{\ff_2^1}(t) = -1\).

If \(r\) is odd, then
\(\Root(\bG, \bT)_{\textup{\(x, r/2\), symm, unram}}
= \emptyset\)
and \(\Root(\bG, \bS)_{\textup{\(y, r/2\), symm, unram}}
= \sset{\pm\alpha', \pm(\alpha' + \beta')}\),
so that
\(\twYuUp_T^G \phi = \YuUp_T^G \phi\)
and \(\twYuUp_S^G \theta
= \YuUp_S^G\bigl(
	\theta(\sgn_{\ff_2^1} \circ \beta')\inv
\bigr)\);
and, with the obvious notation,
\(\sgn_{\ff_2^1}(\beta'(\rho^{\prime\,\vee}(t))) = -1\).

Since the characters
\(\sgn_{\ff_2^1} \circ \beta\)
and \(\sgn_{\ff_2^1} \circ \beta'\) are trivial on the
subgroups of \(T\) and \(S\), respectively,
consisting of elements that admit a lift in
\(\Sp_4(\field)\),
we really must work on \(\operatorname{PGSp}_4(\field)\)
to see this phenomenon.
\end{example}

\begin{named}
	{Corollary \ref{cor:ratl} (to Theorem \ref{thm:ratl})}
\refstepcounter{thm}
\label{cor:ratl}
	%% This circular approach is a little strange.
	%% We manually call this a corollary, whose number is
	%% given by a label.
	%% *Then* we create the label, by manually stepping the
	%% theorem counter (shared by corollaries).
With the notation of Theorem \ref{thm:ratl},
\[
\Phi_{\twYuUp_T^G \phi}(\gamma)
= \sum_{\substack{
	(S, \theta) \in \CC G r(\gamma)\bslash G\dota(T, \phi) \\
	\gamma_{< r} \in S
}}
	\varepsilon\symmram(\theta, \gamma_{< r})\dotm
	\theta(\gamma_{< r})\dotm
	\tilde e(\theta, \gamma_{< r})
	\hat O^H_\theta(Y_{\ge r}).
\]
\end{named}

\begin{rem}
\label{rem:cor:ratl}
With the notation of Corollary \ref{cor:ratl}, the formula
there shows that \(\Phi_{\twYuUp_T^G \phi}(\gamma)\) vanishes unless
the \(G\)-orbit of \(\gamma_{< r}\) intersects \(T\); so,
for definiteness, we could require that actually
\(\gamma_{< r} \in T\), and then take \bT as the torus used to
define of \(Y_{\ge r}\) in the statement of
Theorem \ref{thm:ratl}.
\end{rem}

As we did for Theorem \ref{thm:ratl}, we re-capitulate in
Theorems \ref{thm:almost-stable} and \ref{thm:stable} below
the standing hypotheses of this section.  Remember that we
require \bG to satisfy
\xcite{adler-spice:good-expansions}*
	{Hypotheses \xref{hyp:conn-cent}
and \xref{hyp:good-weight-lattice}}.

\begin{thm}
\label{thm:almost-stable}
Suppose that the group \bG satisfies
Hypothesis \ref{hyp:stronger-mock-exp}
(in addition to
\xcite{adler-spice:good-expansions}*
	{Hypotheses \xref{hyp:conn-cent}
and \xref{hyp:good-weight-lattice}}).
Let \((T, \phi)\) be a depth-\(r\), toral, cuspidal pair,
and suppose that \bT is \unfield-split.
Suppose that \(\gamma = \gamma_{< r}\gamma_{\ge r}\) is
regular, semisimple, and \(r\)-approximable,
and that \(\gamma_{< r} \in T\),
and put \(Y_{\ge r} = \mexp_{\bT, x}\inv \gamma_{\ge r}\).
As in Lemma \ref{lem:SO}, write \(\bG\dota(T, \phi)\)
for the set of stable conjugates of \((T, \phi)\).
Then
\[
(-1)^{\rk_\field \bG}
\sum_{(S, \theta) \in G\bslash\bG\dota(T, \phi)}
	\Phi_{\twYuUp_S^G \theta}(\gamma)
\]
equals
\begin{align*}
\sum_{\substack{
	(S, \theta) \in \CC\bG r(\gamma)\bslash\bG\dota(T, \phi) \\
	\gamma_{< r} \in S
}}
	& \varepsilon\symmram(\theta, \gamma_{< r})\dotm
	\theta(\gamma_{< r})\dotm
	e(\theta, \gamma_{< r})\times{} \\
	& \qquad(-1)^{\rk_\field \CC\bG r(\gamma)}\dotm
	\smash{\widehat{SO}}^{\CC G r(\gamma)}_\theta(Y_{\ge r}),
\end{align*}
where \(\CC\bG r(\gamma)\bslash\bG\dota(T, \phi)\)
is the set of \(\CC\bG r(\gamma)\)-stable conjugacy classes
in \(\bG\dota(T, \phi)\).
\end{thm}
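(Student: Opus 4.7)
The plan is to apply Corollary~\ref{cor:ratl} to each $\Phi_{\twYuUp_S^G\theta}(\gamma)$, collapse the two layers of summation into one, convert $\tilde e$ into $e$ via Proposition~\ref{prop:stable-sign}, and then regroup by $\CC\bG r(\gamma)$-stable conjugacy classes so that Lemma~\ref{lem:SO} can fold the orbital integrals into stable orbital integrals.

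First I would apply Corollary~\ref{cor:ratl} to expand $\Phi_{\twYuUp_S^G\theta}(\gamma)$ as a sum over $(S',\theta')\in\CC G r(\gamma)\bslash G\dota(S,\theta)$ with $\gamma_{< r}\in S'$. Since $\bG\dota(T,\phi)$ is the disjoint union of the rational classes $G\dota(S,\theta)$ as $(S,\theta)$ ranges over $G\bslash\bG\dota(T,\phi)$, the double sum on the left of the theorem collapses into a single sum over $(S',\theta')\in\CC G r(\gamma)\bslash\bG\dota(T,\phi)$ with $\gamma_{< r}\in S'$. Because $\bT$ is $\unfield$-split, every stable conjugate $\bS'$ is a $\unfield$-split maximal torus and therefore contains a maximally $\unfield$-split torus of $\bG$, so Proposition~\ref{prop:stable-sign} applies; and since $\bS'$ is a torus, $\CC{\bS'}r(\gamma) = \bS'$, whence
\[
\tilde e(\theta',\gamma_{< r})
= (-1)^{\rk_\field\bG - \rk_\field\bS'}\,
  (-1)^{\rk_\field\CC\bG r(\gamma) - \rk_\field\bS'}\,
  e(\theta',\gamma_{< r}).
\]
Multiplying by $(-1)^{\rk_\field\bG}$ therefore replaces the coefficient $\tilde e(\theta',\gamma_{< r})$ by the desired $(-1)^{\rk_\field\CC\bG r(\gamma)}\,e(\theta',\gamma_{< r})$.

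Next I would regroup the resulting sum by $\CC\bG r(\gamma)$-stable conjugacy classes. Any $g\in\CC\bG r(\gamma)(\unfield)\subseteq C_\bG(\gamma_{< r})(\unfield)$ fixes $\gamma_{< r}$, so the condition $\gamma_{< r}\in S'$ is inherited by every representative of an $\CC\bG r(\gamma)$-stable class, and the factors $\varepsilon\symmram(\theta,\gamma_{< r})$, $e(\theta,\gamma_{< r})$, and $\theta(\gamma_{< r})$ are constant on such a class---the first two by Lemma~\ref{lem:stable-sign}, whose argument applies just as well to $\CC\bG r(\gamma)$-stable as to $\bG$-stable conjugation, and the last because any stable conjugator $g$ fixes $\gamma_{< r}$, giving $\theta'(\gamma_{< r}) = \theta(g^{-1}\gamma_{< r}g) = \theta(\gamma_{< r})$. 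Pulling these common factors outside and applying Lemma~\ref{lem:SO} to identify the inner sum $\sum_{(S',\theta')\in H\bslash\bH\dota(S,\theta)}\hat O^H_{\theta'}(Y_{\ge r})$ with $\smash{\widehat{SO}}^H_\theta(Y_{\ge r})$ finishes the proof.

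The main obstacle will be the bookkeeping of equivalence relations: one must verify that $\sett{(S,\theta)\in\bG\dota(T,\phi)}{\gamma_{< r}\in S}$ decomposes as the disjoint union of those $\CC\bG r(\gamma)$-stable classes it contains, that each such class consists entirely of pairs with $\gamma_{< r}$ in the torus, and that each class is further partitioned into $\CC G r(\gamma)$-orbits in a way that matches the form required by Lemma~\ref{lem:SO}. All of this flows from the single observation that $\CC\bG r(\gamma)(\unfield)$ centralises $\gamma_{< r}$, but the equivalences must be unwound carefully for the final indexing to align.
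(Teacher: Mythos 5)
Your proposal is correct and follows essentially the same route as the paper: apply Corollary~\ref{cor:ratl} to each summand, collapse the double sum, use Proposition~\ref{prop:stable-sign} (noting $\CC{\bS'}r(\gamma) = \bS'$ so the sign simplifies to $(-1)^{\rk_\field\CC\bG r(\gamma)}$), regroup by $\CC\bG r(\gamma)$-stable classes using Lemma~\ref{lem:stable-sign} for the constancy of the sign factors, and fold the inner sum via Lemma~\ref{lem:SO}. The only cosmetic difference is that the paper applies Proposition~\ref{prop:stable-sign} before collapsing the sum rather than after, and your parenthetical about Lemma~\ref{lem:stable-sign}'s argument ``applying just as well'' is unnecessary---the lemma applies literally, since an $\CC\bG r(\gamma)(\unfield)$-conjugator is in particular a $\bG(\unfield)$-conjugator.
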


Note that the indexing sets for the two sums are different.

\begin{rem}
As in Remark \ref{rem:cor:ratl}, the assumption
that \(\gamma_{< r} \in T\) is just for convenience.
If the \(\bG(\unfield)\)-conjugacy class of \(\gamma_{< r}\)
does not intersect \(T\), then
the set of stable \bG-conjugates \((S, \theta)\)
of \((T, \phi)\) with \(\gamma_{< r} \in S\) is empty,
and
Corollary \ref{cor:ratl}
shows that the function
\((-1)^{\rk_\field \bG}\sum \Phi_{\twYuUp_S^G \theta}\)
of Theorem \ref{thm:almost-stable} vanishes at \(\gamma\).
\end{rem}

\begin{proof}
Put \(\bH = \CC\bG r(\gamma)\).
By Corollary \ref{cor:ratl}
and Proposition \ref{prop:stable-sign},
\[
(-1)^{\rk_\field \bG}
\sum_{(S, \theta) \in G\bslash\bG\dota(T, \phi)}
	\Phi_{\twYuUp_S^G \theta}(\gamma)
\]
is equal to the summation
\begin{align*}
& \sum_{(S, \theta) \in G\bslash\bG\dota(T, \phi)}
	\:
	\sum_{\substack{
		(S', \theta') \in H\bslash G\dota(S, \theta) \\
		\gamma_{< r} \in S'
	}} & F(\theta') \\
= & \sum_{\substack{
		(S', \theta') \in H\bslash\bG\dota(T, \phi) \\
		\gamma_{< r} \in S'
}} & F(\theta') \\
= & \sum_{(S, \theta) \in \bH\bslash\bG\dota(T, \phi)}
	\:
	\sum_{\substack{
		(S', \theta') \in H\bslash\bH\dota(S, \theta) \\
		\gamma_{< r} \in S'
	}} & F(\theta'),
\end{align*}
where
\(\bH\bslash\bG\dota(T, \phi)\) is the set of
stable \bG-conjugacy classes in \(\bG\dota(T, \phi)\),
\(\bH\dota(S, \theta)\) is the set of
stable \bH-conjugates of \((S, \theta)\),
and
\[
F(\theta')
\ldef
\varepsilon\symmram(\theta', \gamma_{< r})\dotm
\Phi_{\theta'}(\gamma_{< r})\dotm
e(\theta', \gamma_{< r})\dotm
(-1)^{\rk_\field \bH}
\hat O^H_{\theta'}(Y_{\ge r})
\]
for \((S', \theta') \in \bH\dota(S, \theta)\).
Note that, in the above notation,
the condition \(\gamma_{< r} \in S'\),
the number \(\Phi_{\theta'}(\gamma_{< r})\),
and (by Lemma \ref{lem:stable-sign})
the numbers
\(\varepsilon\symmram(\theta', \gamma_{< r})\)
and \(e(\theta', \gamma_{< r})\) depend only on
\((S, \theta)\), not \((S', \theta')\);
and that, by
Definitions \ref{defn:disc} and \ref{defn:normal-harm},
the function \(\redD_S\) is identically \(1\),
so that \(\Phi_\theta = \theta\).
The result now follows from Lemma \ref{lem:SO}.
\end{proof}

\begin{thm}
\label{thm:stable}
Suppose that \bG satisfies
\cite{debacker-reeder:depth-zero-sc}*
	{Restriction 12.4.1(2)}
(in addition to
\xcite{adler-spice:good-expansions}*
	{Hypotheses \xref{hyp:conn-cent}
and \xref{hyp:good-weight-lattice}}).
Let \((T, \phi)\) be a positive-depth, toral, cuspidal pair,
and suppose that \bT is \unfield-split.
Then
\[
\Theta\textsup{st}_{\twYuUp_T^G \phi} \ldef
\sum_{(S, \theta) \in G\bslash\bG\dota(T, \phi)}
	\Theta_{\twYuUp_S^G \theta}
\]
is stable on the set of
\(r\)-approximable and strongly regular semisimple
elements of \(G\).
That is, it is constant on each stable conjugacy class
(see \cite{kottwitz:ratl-conj}*{\S3, p.~788}
and
\cite{debacker-reeder:depth-zero-sc}*{\S2.9, p.~817})
in that set.
\end{thm}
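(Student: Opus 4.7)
The plan is to reduce stability, via Theorem \ref{thm:almost-stable}, to the known stability of the Fourier transforms of stable orbital integrals on the centralisers \(\CC\bG r(\gamma)\); the latter is provided by Waldspurger's results and \cite{debacker-reeder:depth-zero-sc}*{Lemma 12.2.3}.

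First I would fix strongly regular semisimple, \(r\)-approximable elements \(\gamma, \delta \in G\) that are stably conjugate, write \(\bS = C_\bG(\gamma)\), and choose \(g \in \bG(\sepfield)\) with \(g\inv\sigma g \in \bS(\sepfield)\) for all \(\sigma \in \Gamma\) and \(\delta = \Int(g)\gamma\) (as in Remark \ref{rem:stab-approx}). By Lemma \ref{lem:stab-approx}, applying \(\Int(g)\) to a normal \(r\)-approximation \((\gamma_i)\) of \(\gamma\) produces a normal \(r\)-approximation of \(\delta\); in particular \(\delta_{<r} = \Int(g)\gamma_{<r}\) and \(\delta_{\ge r} = \Int(g)\gamma_{\ge r}\). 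By Remark \ref{rem:disc:roots} (applied to \(\Int(g)\gamma_{< r}\)), \(\abs{\redD_G(\gamma)} = \abs{\redD_G(\delta)}\), so stability of \(\Theta\textsup{st}_{\twYuUp_T^G \phi}\) is equivalent to stability of \(\Phi\textsup{st} \ldef \smabs{\redD_G}^{1/2}\Theta\textsup{st}_{\twYuUp_T^G \phi}\). After conjugating \((T, \phi)\) within its \(G\)-orbit (which leaves \(\Theta\textsup{st}\) unchanged), I may assume \(\gamma_{< r} \in T\), so that Theorem \ref{thm:almost-stable} gives an explicit expression for \(\Phi\textsup{st}(\gamma)\) as a sum over \((S, \theta) \in \CC\bG r(\gamma)\bslash\bG\dota(T,\phi)\) with \(\gamma_{< r} \in S\).

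Next I would verify term-by-term that the bijection of indexing sets induced by \(\Int(g)\) (which sends \((S, \theta)\) to \((\Int(g)S, \theta \circ \Int(g)\inv)\) and identifies \(\CC\bG r(\gamma)\bslash\bG\dota(T,\phi)\) with \(\CC\bG r(\delta)\bslash\bG\dota(T,\phi)\)) preserves each factor in the summand. The identity \(\theta(\gamma_{<r}) = (\theta \circ \Int(g)\inv)(\delta_{<r})\) is immediate; the equalities
\[
\varepsilon\symmram(\theta, \gamma_{< r})
= \varepsilon\symmram(\theta \circ \Int(g)\inv, \delta_{< r})
\qandq
e(\theta, \gamma_{< r})
= e(\theta \circ \Int(g)\inv, \delta_{< r})
\]
follow from Lemma \ref{lem:stable-sign}; and the factor \((-1)^{\rk_\field \CC\bG r(\gamma)}\) depends only on the \(\unfield\)-form of \(\CC\bG r(\gamma)\), which is preserved by \(\Int(g)\).

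The main obstacle---and the only non-formal step---is matching the orbital integral factors, that is, showing
\[
\smash{\widehat{SO}}^{\CC G r(\gamma)}_\theta(Y_{\ge r})
= \smash{\widehat{SO}}^{\CC G r(\delta)}_{\theta \circ \Int(g)\inv}(Y'_{\ge r}),
\]
where \(Y'_{\ge r} = \mexp_{\Int(g)\bT, \Int(g)x}\inv \delta_{\ge r}\). Here the stronger assumption \cite{debacker-reeder:depth-zero-sc}*{Restriction 12.4.1(2)} intervenes: its consequences for the mock-exponential (together with Hypothesis \ref{hyp:stronger-mock-exp}\eqref{hyp:stronger-mock-exp:equi}) guarantee that \(\Ad(g)Y_{\ge r}\) and \(Y'_{\ge r}\) represent the same stable \(\CC\bG r(\delta)\)-conjugacy class of topologically nilpotent elements of \(\Lie(\CC\bG r(\delta))\). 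Combining Lemma \ref{lem:SO} (to re-write each \(\smash{\widehat{SO}}\) as a sum over a stable orbit) with the stability of the Fourier transform of a stable orbital integral on a topologically nilpotent element---which is exactly the content of \cite{waldspurger:transfert}*{Th\'eor\`eme 1.5} and \cite{debacker-reeder:depth-zero-sc}*{Lemma 12.2.3}---yields the required equality and completes the proof.
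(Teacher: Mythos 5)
Your overall strategy matches the paper's — apply Theorem \ref{thm:almost-stable}, reduce to the case \(\gamma_{< r} \in T\), then check term-by-term invariance under the bijection of indexing sets induced by an \(\unfield\)-conjugation — but there is a genuine error in the middle step. You assert that the factor \((-1)^{\rk_\field \CC\bG r(\gamma)}\) ``depends only on the \(\unfield\)-form of \(\CC\bG r(\gamma)\), which is preserved by \(\Int(g)\).'' That is false. The \(\field\)-rank is an invariant of the \emph{\(\field\)-form}, not the \(\unfield\)-form, and stable conjugation by \(g \in \bG(\unfield)\) with \(g\inv\Frob g \in C_\bG(\gamma)(\unfield)\) produces an \emph{inner twist} of \(\bH = \CC\bG r(\gamma)\) (the twisting cocycle is \(\sigma \mapsto g\inv\sigma g\), which is generally nontrivial in \(H^1(\Gamma, \bH)\)). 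Inner twists can and do change the \(\field\)-rank; indeed this sign change is precisely the Kottwitz phenomenon flagged in the paper's discussion of \(\tilde e(\pi')\) after Definition \ref{defn:signs}. Consequently your attempt to handle the rank sign and the orbital integral as two separate invariants is not salvageable: the correct statement, which is what \cite{debacker-reeder:depth-zero-sc}*{Lemma 12.2.3} provides, is that the \emph{product} \((-1)^{\rk_\field \bH}\,\smash{\widehat{SO}}^H_\theta(Y_{\ge r})\) is invariant under stable conjugacy, the two factors varying in a compensating way. You must keep them coupled, as the paper does.

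A secondary issue: you invoke Hypothesis \ref{hyp:stronger-mock-exp}\eqref{hyp:stronger-mock-exp:equi} to relate \(Y_{\ge r}\) and \(Y'_{\ge r}\), but that hypothesis is stated for \emph{rational} \(g \in G\), not for \(g \in \bG(\unfield)\) effecting a stable conjugation, so it does not apply as written. The paper sidesteps this entirely: under Restriction 12.4.1(2), \cite{debacker-reeder:depth-zero-sc}*{Lemma B.0.3} gives that the genuine exponential converges on \(\fg(\unfield)_{0+}\), so one may take the mock-exponentials of Hypothesis \ref{hyp:stronger-mock-exp} to be restrictions of \(\mexp\). The genuine exponential is Galois-equivariant and conjugation-equivariant over \(\unfield\), so \(Y'_{\ge r} = \Ad(g)Y_{\ge r}\) literally, and no topologically-nilpotent-conjugacy argument is needed.
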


\begin{rem}
By \xcite{adler-spice:good-expansions}*
	{Lemma \xref{lem:simult-approx}}, under mild
conditions, every element of a tame torus is
\(r\)-approximable.
Of course, if \(p\) is sufficiently large, then every
semisimple element of \(G\) lies in a tame torus.
\end{rem}

\begin{proof}
By \cite{debacker-reeder:depth-zero-sc}*{Lemma B.0.3}, the
exponential map \mexp converges on \(\fg(\unfield)_{0+}\);
so we may, and do, take the maps \(\mexp_{\bT, \ox}\) of
Hypothesis \ref{hyp:stronger-mock-exp} to be restrictions 
of \mexp.

Let \(\gamma = \gamma_{< r}\gamma_{\ge r}\) be an
\(r\)-approximable and strongly regular semisimple element
of \(G\).
By Theorem \ref{thm:almost-stable},
\(\Theta\textsup{st}_{\twYuUp_S^G \theta}(\gamma) = 0\)
unless the \(\bG(\unfield)\)-orbit of \(\gamma_{< r}\)
intersects \(T\).

Thus, we may, and do, suppose that \(\gamma_{< r} \in T\),
and then show that
\begin{equation}
\tag{$*$}
\label{thm:stable:eq:stable}
\Theta\textsup{st}_{\twYuUp_S^G \theta}(\gamma)
= \Theta\textsup{st}_{\twYuUp_S^G \theta}(\Int(g)\gamma)
\end{equation}
for any element \(g \in \bG(\unfield)\) such that
\(\Int(g)\gamma\) is defined over \field,
\ie, such that \(g\inv\Frob g \in C_\bG(\gamma)(\unfield)\).

Given such an element \(g\), put
\(\gamma' = \Int(g)\gamma\)
and \(\bT' = \Int(g)\bT\).
As usual, note that \(\bT'\) is defined over \field,
and \(\gamma' \in T'\).
By Lemma \ref{lem:stab-approx}, the element \(\gamma'\) is
also \(r\)-approximable, with
\(\gamma'_{< r} = \Int(g)\gamma_{< r}\)
and
\(\gamma'_{\ge r} = \Int(g)\gamma_{\ge r}\).
Put \(Y'_{\ge r} = \mexp\inv \gamma'_{\ge r}\).
Put \(\bJ = \CC\bG r(\gamma') = \Int(g)\bH\)
\xcite{adler-spice:good-expansions}*
	{Corollary \xref{cor:compare-centralizers}}.
Exactly as in \cite{debacker-reeder:depth-zero-sc}*
	{\S11.1, p.~861},
there is a bijection
\[
\map
	{\iota_g}
	{\bH\bslash\set
		{(S, \theta) \in \bG\dota(T, \phi)}
		{\gamma_{< r} \in S}
	}
	{\bJ\bslash\set
		{(S', \theta') \in \bG\dota(T, \phi)}
		{\gamma'_{< r} \in S'}
	},
\]
where, as usual, \(\bG\dota(T, \phi)\) is the set of stable
\bG-conjugates of \((T, \phi)\), and the notations
\(\bH\bslash\cdots\) and \(\bJ\bslash\cdots\)
stand for the collections of stable \bH- and \bJ-conjugacy
classes, respectively, in the appropriate sets.
If \(\iota_g(S, \theta) = (S', \theta')\), then
	\begin{itemize}
	\item by construction, we have that
\(\theta(\gamma_{< r}) = \theta'(\gamma'_{< r})\);
	\item by Lemma \ref{lem:stable-sign}, we have that
\(\varepsilon\symmram(\theta, \gamma_{< r})
= \varepsilon\symmram(\theta', \gamma'_{< r})\)
and
\(e(\theta, \gamma_{< r})
= e(\theta', \gamma'_{< r})\);
	and
	\item by
\cite{debacker-reeder:depth-zero-sc}*{Lemma 12.2.3},
	%% This requires that \bG possesses an
	%% \unfield-split maximal torus.
	%% (See \S12.2, p.~863.)
	%% \bT does the job.
we have that
\[
(-1)^{\rk_\field \bJ}\dotm
\smash{\widehat{SO}}^J_{\theta'}(Y'_{\ge r})
= (-1)^{\rk_\field \bH}\dotm
\smash{\widehat{SO}}^H_\theta(Y_{\ge r}).
\]
(See \cite{debacker-reeder:depth-zero-sc}*{Example 12.2.1}.)
	\end{itemize}
The equality \eqref{thm:stable:eq:stable} now follows from
Theorem \ref{thm:almost-stable}.
\end{proof}

\begin{bibdiv}
\begin{biblist}
\bibselect{references}
\end{biblist}
\end{bibdiv}
\end{document}